\documentclass[10pt]{amsart}
\usepackage{latexsym,amsmath,amssymb,graphicx}
\usepackage[all,web]{xy}

\def\urlfont{\DeclareFontFamily{OT1}{cmtt}{\hyphenchar\font='057}
              \normalfont\ttfamily \hyphenpenalty=10000}

\DeclareFontFamily{OT1}{rsfs10}{}
\DeclareFontShape{OT1}{rsfs10}{m}{n}{ <-> rsfs10 }{}
\DeclareMathAlphabet{\mathscript}{OT1}{rsfs10}{m}{n}

\DeclareMathOperator{\im}{Im}       
\DeclareMathOperator{\id}{id}       
\DeclareMathOperator{\Pic}{Pic}     
\DeclareMathOperator{\rk}{rk}       
\DeclareMathOperator{\Sing}{Sing}   
\DeclareMathOperator{\cone}{Cn} 

\title[Homological type of geometric transitions]{Homological type of geometric transitions}

\author[Michele Rossi]{Michele Rossi}

\address{Dipartimento di Matematica, Universit\`a di Torino,
via Carlo Alberto 10, 10123 Torino} \email{michele.rossi@unito.it}

\thanks{This work has been developed despite the effects of the Italian law 133/08 (http://groups.google.it/group/scienceaction ).
        This law drastically reduces public funds to public Italian universities, which is particularly dangerous for free scientific research,
        and it will prevent young researchers from getting a position, either temporary or tenured, in Italy.
        The author is protesting against this law to obtain its repeal.}

\def\P{{\mathbb{P}}}

\def\p2{\mathbb{P}^2}
\def\p3{\mathbb{P}^3}
\def\p4{\mathbb{P}^4}

\def\co{\mathcal{O}}

\def\su{\operatorname{SU}}

\def\rk{\operatorname{rk}}

\def\Exc{\operatorname{Exc}}

\def\Z{\mathbb{Z}}

\def\C{\mathbb{C}}
\def\R{\mathbb{R}}

\def\Q{\mathbb{Q}}
\def\F{\mathbb{F}}

\theoremstyle{plain}
\newtheorem{theorem}{Theorem}[section]
\newtheorem{proposition}[theorem]{Proposition}
\newtheorem{corollary}[theorem]{Corollary}

\newtheorem{lemma}[theorem]{Lemma}

\theoremstyle{remark}
\newtheorem{remark}[theorem]{Remark}

\newtheorem{example}[theorem]{Example}
\newtheorem{examples}[theorem]{Examples}

\theoremstyle{definition}
\newtheorem{definition}[theorem]{Definition}
\newtheorem{defn}{Definition}

\newtheorem*{step I}{Step I}
\newtheorem*{step II}{Step II}
\newtheorem*{step III}{Step III}
\newtheorem*{step IV}{Step IV}
\newtheorem*{conclusion}{Conclusion}
\newtheorem*{acknowledgments}{Acknowledgments}

%

\newcommand{\halfline}{\vskip6pt}
%

\newcommand{\cy}{Ca\-la\-bi--Yau }
\newcommand{\ka}{K\"{a}hler }

\begin{document}

\begin{abstract}
The present paper gives an account and quantifies the change in topology induced by small and type II geometric transitions, by introducing the notion of the \emph{homological type} of a geometric transition. The obtained results agree with, and go further than, most results and estimates, given to date by several authors, both in mathematical and physical literature.
\end{abstract}

\maketitle

\tableofcontents

\section*{Introduction}

A \emph{geometric transition} (g.t.) between two \cy threefolds $Y$ and $\widetilde{Y}$
(see Definition \ref{cy-def}) is
the process $T(Y,\overline{Y},\widetilde{Y})$ obtained by ``composing" a birational contraction
$\phi:Y\rightarrow \overline{Y}$, to a
normal threefold $\overline{Y}$, with a complex smoothing (see Definition \ref{g.t.def}). A g.t. is a useful tool for connecting to each other \emph{topologically distinct} \cy
threefolds. This feature is probably the main source of interest
in the study of g.t.'s both in mathematics and physics. \halfline

In mathematics,
the story goes back to deep speculations due to H.~Clemens
\cite{Clemens83}, R.~Friedman \cite{Friedman86}, F.~Hirzebruch
\cite{Hirzebruch87}, J.~Werner \cite{Werner87} and M.~Reid
\cite{Reid87}. In fact, the huge multitude of known topologically
distinct \cy threefolds makes any concept of \emph{moduli space}
for them immediately \emph{wildly reducible}. This unpleasant fact
dramatically clashes with the well known irreducibility of moduli
spaces of both elliptic curves and K3 surfaces, which are the
lower dimensional analogues of \cy threefolds. Actually, Reid
(op. cit.) underlines that, at the beginning (in the forties), the
moduli space of (algebraic) K3 surfaces seemed to F.~Enriques to
be a 19--dimensional variety admitting a countable number of
irreducible components, $\mathcal{M}_g$, one for each value of the
sectional genus, $g\geq 3$ \cite{Enriques46}. Twenty years later
K.~Kodaira \cite{Kodaira64} was able to recover a 20--dimensional
irreducible moduli space, $\mathcal{M}$, for K3 surfaces by leaving
the algebraic category to work in the larger category of analytic,
compact, K\"{a}hler varieties, and discovering that the
\emph{generic} K3 surface is an analytic non--algebraic complex
variety: in particular the moduli space
$\mathcal{M}^{alg}=\bigcup_g \mathcal{M}_g$ of algebraic K3's
embeds in $\mathcal{M}$ as a dense closed subset. The so--called
\emph{Reid's fantasy} suggests that an analogous situation could
happen for \cy threefolds where birational transformations and
 g.t.'s could be
the right instruments to reduce the parameterization of birational
classes of \cy threefolds to an irreducible moduli space of
complex structures over suitable connected sums of complex
hypertori. \halfline

In physics, \cy threefolds made their appearance in the eighties
\cite{CHSW}, as the space spanned by the so--called \emph{internal
degrees of freedom} of a certain $(1+1)$--dimensional world--sheet
field theory describing superstrings' propagation in
$(3+1)$--dimensional Minkowski space--time. The main observables
of the superstring model are then typically determined by this
\emph{internal structure} (the \cy \emph{vacuum}). Therefore, in
spite of the almost uniqueness, via dualities (and, later,
$\mathcal{M}$--theory), of consistent superstring theories, the
superstring model remains undetermined due to the unavoidable
uncertainty on the topology of the \cy vacuum: this is the so
called \emph{vacuum degeneracy problem}. A solution to this
problem was firstly proposed by P.S.~Green and T.~H\"{u}bsch
\cite{Green-Hubsch88let}, \cite{Green-Hubsch88}, who conjectured,
motivated by the contemporary Reid's fantasy, that topologically
distinct \cy vacua could be connected to each other \emph{by means of
conifold transitions} (see Definition \ref{c.t.def}) which should induce a \emph{phase
transition} between corresponding superstring models. This latter
fact, which is the physical counterpart of the mathematical
process given by a conifold transition, was actually understood by
A.~Strominger as a \emph{condensation of massive black holes to
massless ones} \cite{Strominger95}.\halfline

In this context it seems, then, of primary importance to understand
and quantify the change in topology induced by a geometric transition. This is what H. Clemens did for conifold transitions some time ago in \cite{Clemens83}. For more general g.t's, a lot of computations have been carried out by many authors, in the last thirty years, but I wasn't able to find in the literature a clear and/or complete statement about these.

\noindent Moreover, many results are obtained by invoking arguments which are not strictly topological, like, e.g., complex moduli and the Bogomolov--Tian--Todorov Theorem.

In the present paper I will try to organize this problem by introducing the concept of the \emph{homological type of a geometric transition}. Precisely,

\begin{defn}\label{ht} A g.t. $T=T(Y,\overline{Y},\widetilde{Y})$ is said to admit \emph{homological type}
\[
    h[T] = (k',k'',c',c'')
\]
where $k',k''$ are non--negative integers  and $c',c''\in \Z$ with $c'\equiv c''\mod 2$, if $T$ induces the following change on
\begin{itemize}
\item[(a)] \emph{Betti numbers}: \ $b_i(Y)= b_i(\overline{Y}) = b_i(\widetilde{Y})$ for $i\neq
    2,3,4$ and
    \begin{equation*}
    \begin{array}{ccccc}
      b_2(Y) & = & b_2(\overline{Y})+k'+k'' & = & b_2(\widetilde{Y})+k'+k''\ ,\\
      b_3(Y) & = & b_3(\overline{Y})-c' & = & b_3(\widetilde{Y})-c'-c''\ ,\\
      b_4(Y) & = & b_4(\overline{Y})+k'' & = & b_4(\widetilde{Y})+k'+k''\ . \\
    \end{array}
    \end{equation*}
\end{itemize}
Then in particular $T$ induces the following changes on
\begin{itemize}
\item[(b)] \emph{Hodge numbers}:
    \begin{eqnarray*}
      h^{1,1}(Y) &=& h^{1,1}(\widetilde{Y})+k \\
      h^{2,1}(Y) &=& h^{2,1}(\widetilde{Y})-c
    \end{eqnarray*}
    where $k:=k'+k''$ and $c:=(c'+c'')/2$\ ,
\item[(c)] \emph{Euler characteristics}:
    \begin{equation*}
    \left. \begin{array}{c}
      \chi(Y)-\chi(\overline{Y})= k+c'+k'' \\
      \chi(\overline{Y})-\chi(\widetilde{Y})= k'+ c''
    \end{array}\right\}
    \ \Rightarrow\ \chi(Y)-\chi(\widetilde{Y})= 2(k + c)\ .
\end{equation*}
\end{itemize}
Moreover, it turns out that \emph{$k'=0$ if and only if $\overline{Y}$ is $\Q$--factorial} (see Remark \ref{difetto2} and Lemma \ref{difetto=0}).
\end{defn}
\noindent This notation is motivated by equations (b) on Hodge numbers allowing us to conclude that \emph{a g.t. $T(Y,\overline{Y},\widetilde{Y})$ admitting homological type $h(T)=(k',k'',c',c'')$ increases complex moduli \emph{(in physics: \emph{hypermultiplets})} by $c$ and decreases K\"{a}lher moduli \emph{(in physics: \emph{vector multiplets})} by $k$, when passing from the \cy 3--fold $Y$ to the \cy 3--fold $\widetilde{Y}$}.

\medskip\noindent \textbf{Main Results.} Let us first  of all observe that \emph{a general g.t. $T(Y,\overline{Y},\widetilde{Y})$ does not admit a homological type} since the homology change induced by $T$ cannot be  summarized by a string of 4 integers: in fact in general $b_2(\overline{Y})\neq b_2(\widetilde{Y})$ requires the introduction of at least one further integer gauging this last discrepancy. By the way, quantifying the topological change given by a general g.t. is actually a hopeless problem since the exceptional locus $E:=\Exc(\phi)$ of the associated birational contraction $\phi:Y\rightarrow\overline{Y}$ may be very wild! On the other, hand what is observed in the present paper is that a homological type as in Definition \ref{ht} may suffice to describe what happens for some classes of (understandable) g.t's. Precisely, it is proven that:

\medskip\noindent\textbf{Theorem \ref{cambio omologico small}}\ \emph{A small g.t. $T(Y,\overline{Y},\widetilde{Y})$} (see Definition \ref{small g.t. def}) \emph{admits homological type}
\[
    h[T]=(k,0,c',c'')
\]
\emph{where $k$ is the number of homologically independent
    exceptional rational curves composing $E=\Exc(\phi)\subset Y$, $c'$ is the number of independent relations
    linking the homology classes of exceptional rational curves in $Y$ and $c''$ is the number of homologically independent vanishing cycles in $\widetilde{Y}$. Moreover,
    \begin{itemize}
    \item[(i)] the total number of irreducible components of $E$ is
    \[
    n:=\sum_{p\in \Sing(\overline{Y})} n_p = k + c'\ ,
    \]
    where $n_p$ is the number of irreducible (rational) components
    of $E_p:=\phi^{-1}(p)$, for any $p\in
    \Sing(\overline{Y})$,
    \item[(ii)] the \emph{global Milnor number of $\overline{Y}$} is
    \[
    m:=\sum_{p\in \Sing(\overline{Y})} m_p = k + c''\ ,
    \]
    where $m_p$ is the Milnor number of the singular point $p\in \Sing(\overline{Y})$. Hence, $k$ turns out to be also the maximal number of independent relations
    linking the homology classes of vanishing cycles in $\widetilde{Y}$.
    \end{itemize}
    In particular, $T$ is a type I g.t.} (see Definition \ref{type-definizione}) \emph{if and only if $k'=1=k$ and if $T$ is a conifold t., then $c'=c''=c$}.

\medskip\noindent\textbf{Theorem \ref{cambio omologico tipo II}}\ \emph{A type II g.t.} $T(Y,\overline{Y},\widetilde{Y})$ (see Definition \ref{type-definizione}) \emph{admits homological type}
\[
    h[T]=(0,1,c',c'')
\]
\emph{given by}
\begin{itemize}
    \item[(i)] $c'=-1 +b_2(E)-b_3(E)$\ ,
    \item[(ii)] $c''=1-\chi(\widetilde{B})=m_p-b_2(\widetilde{B})$\ ,
\end{itemize}
\emph{where $\widetilde{B}$ is the Milnor fiber of the smoothing $\widetilde{Y}$ and $m_p:=b_3(\widetilde{B})$ is the Milnor number of the unique singular point $p=\phi(E)\in\overline{Y}$. In particular $b_1(\widetilde{B})=0$\ }.

\medskip \noindent Let us say a few words about the previous results.

\noindent When $T$ is a conifold t., Theorem \ref{cambio omologico small} gives precisely the Clemens' results in \cite{Clemens83}, here reported in Theorem \ref{cambio omologico}. Moreover,  relations (ii) and part of relations  in Definition \ref{ht}.(a), with $k'$ and $c''$ as given in the statement of Theorem \ref{cambio omologico small}, were already proved  by Y. Namikawa and J.H.M. Steenbrink in \cite{Namikawa-Steenbrink95} Theorem (3.2) and Example (3.8): as far as I know, these were the most complete known evaluations of homology change induced by a small g.t. until now.

\noindent Theorem \ref{cambio omologico tipo II} can be improved by specializing the assumptions. In fact, it is well known that the exceptional locus $E=\Exc(\phi)$ of the birational contraction $Y\stackrel{\phi}{\rightarrow} \overline{Y}$, associated with a type II g.t. $T(Y,\overline{Y},\widetilde{Y})$, is an irreducible \emph{generalized} del Pezzo surface (\cite{Reid80}, Proposition (2.13)). This means that one of the following happens
\begin{itemize}
\item \emph{$E$ is normal and rational} then Theorem \ref{cambio omologico tipo II} can be improved as follows (see Theorem \ref{cambio omologico II +}):
    \begin{itemize}
    \item[(i)] $c'=9-d-n_E$\ , \emph{where $d=\deg(E):=\omega_E^2$ and $n_E$ is the number of rational exceptional curves composing the exceptional locus of a minimal resolution $\widehat{E}\stackrel{\pi}{\rightarrow}E$\ ,}
    \item[(ii)] \emph{if $d\leq 4$ then $c''=m_p$\ .}
    \end{itemize}
\item \emph{$E$ admits an elliptic singular point} (in the sense of \cite{Reid80}, Definition
    (2.4)) then Theorem \ref{cambio omologico tipo II} can be improved as follows (see Theorem \ref{cambio omologico tipo II-ellittico}):
    \begin{itemize}
    \item[(i)] \emph{$b_2(E)=1$ and $b_3(E)=2$, giving $c'=-2$\ ,}
    \item[(ii)] \emph{$c''=m_p$ and in particular it must be even.}
    \end{itemize}
\item \emph{$E$ is a non--normal del Pezzo surface} (in the sense of \cite{Reid94}) then Theorem \ref{cambio omologico tipo II} can be improved as follows (see Theorem \ref{cambio omologico tipo II-nonnormale}):
    \begin{itemize}
    \item[(i)] \emph{$b_2(E)\in\{1,2\}$ and $b_3(E)=0$, giving $c'\in\{0,1\}$, respectively},
    \item[(ii)] $\chi(\widetilde{B})\equiv b_2(E)\mod 2 $\ .
    \end{itemize}
\end{itemize}
 The first equation in Definition \ref{ht}.(b) in the case of a type II g.t., hence giving $k=1$, was already proved in \cite{KK} Proposition 3.1. In particular, if \emph{$E=\Exc(\phi)$ is smooth} then a comparison between the equations in Definition \ref{ht}.(b) and Theorem 3.3 and Remark 3.6 in \cite{KK}, gives conditions on the Euler characteristic $\chi(\widetilde{B})$ and the Milnor number $m_p$ and then the list in Remark \ref{Milnor-minorazioni}. In particular, this shows that we cannot expect the Milnor fiber near $p\in\overline{Y}$ to be a bouquet of 3--dimensional spheres when $d:=\deg(E)= 6$ or 7, the contrary of what happens for $d\leq 4$. Moreover, a further comparison with \cite{Morrison-Vafa96} \S 7.1 and \cite{Morrison-Seiberg97} \S 3 gives an interesting interpretation of the homological type of a type II g.t. in terms of (dual) Coxeter numbers of suitable Weyl groups, as in (\ref{coxeter}).

\medskip Employed techniques are almost completely topological: construction of strong deformation retractions, Mayer--Vietoris  and relative homology long exact sequences, dualities in homology, Leray spectral sequences and associated lower terms exact sequences. Transcendental methods (exponential sequence on a \cy 3--fold and its pushed forward by birational contraction on a normal and singular 3--fold) are used simply to focus on how the Picard group is actually a topological invariant of all the 3--folds involved in a g.t.\ .

\medskip The present paper is organized as follows. \S 1 reviews what a g.t. is, the related nomenclature, and Wilson's classification of birational contractions of \cy 3--folds. In \S 2 Milnor's and Looijenga's local analysis of isolated singularities are reviewed and the strong deformation retractions useful in the following are introduced (Propositions \ref{contrazione i.s.} and \ref{omotopia scoppiata}). In \S 3 we review a result proved in \cite{R}, rewritten in terms of the homological type of conifold transitions. Finally \S 4 and \S 5 state and prove the main results, the former for small g.t's and the latter for type II g.t's\ .

\begin{acknowledgments} I would like to thank Alberto Collino for useful suggestions.
\end{acknowledgments}

\section{Geometric transitions}

\begin{definition}[\cy 3--folds]\label{cy-def}
A smooth, complex, projective 3--fold $Y$ is called \emph{\cy} if
\begin{enumerate}
    \item[(a)] $\mathcal{K}_Y \cong \mathcal{O}_Y$\ ,
    \item[(b)] $h^{1,0}(Y) = h^{2,0}(Y) = 0$\ .
\end{enumerate}
\end{definition}

\begin{remark}There are a lot of more or less equiv\-a\-lent def\-i\-ni\-tions
    of \cy 3--folds e.g.: a \ka complex, compact 3--fold admitting either (1) a
    \emph{Ricci flat metric} (Calabi conjecture and Yau theorem), or (2) a flat,
    non--degenerate, holomorphic 3--form, or (3) holonomy group a subgroup
    of $\su (3)$ (see \cite{Joice2000} for a complete description of
    equivalences and implications).

    \noindent In the algebraic context, the given definition gives the 3--dimensional
    analogous of smooth elliptic curves and smooth $K3$ surfaces.
\end{remark}

\begin{examples}
\begin{enumerate}
    \item[(a)] Smooth hypersurfaces of degree $5$ in $\P ^4$\ ,
    \item[(b)] the general element of the anti--canonical system of a
    \emph{sufficiently good} 4--dimensional toric Fano variety (see
    \cite{Batyrev94}),
    \item[(c)] suitable complete intersections.... (iterate the previous
    examples),
    \item[(d)] the double covering of $\P ^3$ ramified along a smooth
    surface of degree 8 in $\P ^3$ (octic double solid).
\end{enumerate}
\end{examples}

\begin{definition}[Geometric transitions]\label{g.t.def}(cfr. \cite{Morrison99},
\cite{Cox-Katz99}, \cite{GR02}, \cite{R}) Let $Y$ be a \cy 3--fold
and $\phi : Y\rightarrow \overline{Y}$ be a \emph{birational
contraction} onto a \emph{normal} variety. If there exists a
complex deformation (\emph{smoothing}) of $\overline{Y}$ to a \cy
3--fold $\widetilde{Y}$, then the process of going from $Y$ to
$\widetilde{Y}$ is called a \emph{geometric transition} (for short
\emph{transition} or \emph{g.t.}) and denoted by
$T(Y,\overline{Y},\widetilde{Y})$ or by the diagram
\[
    \xymatrix@1{Y\ar@/_1pc/ @{.>}[rr]_T\ar[r]^{\phi}&
                \overline{Y}\ar@{<~>}[r]&\widetilde{Y}}\ .
\]
A transition $T(Y,\overline{Y},\widetilde{Y})$ is called
\emph{trivial} if $\widetilde{Y}$ is a deformation of $Y$.
\end{definition}

\begin{definition}[Conifold transitions]\label{c.t.def}
A g.t.
\[
    \xymatrix@1{Y\ar@/_1pc/ @{.>}[rr]\ar[r]^{\phi}&
                \overline{Y}\ar@{<~>}[r]&\widetilde{Y}}
\]
\vskip.1cm \noindent is called \emph{conifold} (\emph{c.g.t.} for
short) and denoted $CT(Y,\overline{Y},\widetilde{Y})$, if
$\overline{Y}$ admits only \emph{ordinary double points} (nodes or
o.d.p.) as singularities.
\end{definition}

\begin{example}[cfr. \cite{GMS95}]\label{l'esempio}
The following is a non--trivial c.g.t.. For details see \cite{R},
1.3.

\noindent Let $\overline{Y}\subset\P ^4$ be the \emph{generic
quintic 3--fold containing the plane $\pi : x_3 = x_4 = 0$}. Its
equation is
\[
    x_3 g(x_0,\ldots ,x_4) + x_4 h(x_0,\ldots ,x_4) = 0
\]
where $g$ and $h$ are generic homogeneous polynomials of degree 4.
$\overline{Y}$ is then singular and
\[
    \Sing (\overline{Y}) = \{ [x]\in \P ^4 | x_3=x_4=g(x)=h(x)=0\}
    = \{ 16 \ \text{nodes}\}\ .
\]
Blow up $\P^4$ along the plane $\pi$ and consider the proper
transform $Y$ of $\overline{Y}$. Then:
\begin{itemize}
    \item $Y$ is a smooth, \cy 3--fold,
    \item the restriction to $Y$ of the blow up morphism gives a
    crepant resolution $\phi: Y\rightarrow\overline{Y}$.
\end{itemize}
The obvious smoothing of $\overline{Y}$ given by the generic
quintic 3--fold $\widetilde{Y}$ completes the c.g.t.
$CT(Y,\overline{Y},\widetilde{Y})$.
\end{example}

\begin{definition}[Primitive contractions and transitions]
A birational contraction from a \cy 3--fold to a normal one is
called \emph{primitive} if it cannot be factored into birational
morphisms of normal varieties. A g.t.
\[
    \xymatrix@1{Y\ar@/_1pc/ @{.>}[rr]_T\ar[r]^{\phi}&
                \overline{Y}\ar@{<~>}[r]&\widetilde{Y}}
\]
is called \emph{primitive} if the associated birational
contraction $\phi$ is primitive.
\end{definition}

\begin{proposition}
Let $T(Y,\overline{Y},\widetilde{Y})$ be a g.t. and $\phi
:Y\rightarrow\overline{Y}$ the associated birational contraction.
Then $\phi$ can always be factored into a composite of a
\emph{finite} number of primitive contractions.
\end{proposition}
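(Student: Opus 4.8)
The plan is to argue by induction on the non\-negative integer $d(\phi):=\rho(Y)-\rho(\overline{Y})$, where $\rho(\,\cdot\,)$ denotes the Picard number, i.e.\ the rank of the N\'eron--Severi group; this is finite for every normal projective variety, so $d(\phi)$ is well defined, and the same quantity is attached to each intermediate contraction produced along the way. The geometric input is the elementary fact that \emph{a non\-trivial birational morphism $f:X\to X'$ of normal projective varieties, with $X$ carrying an ample line bundle, satisfies $\rho(X)>\rho(X')$}. Indeed, being proper, birational and not an isomorphism onto the normal variety $X'$, the morphism $f$ cannot be finite (Zariski's main theorem), so some fibre is positive\-dimensional and hence contains an irreducible curve $C$ with $f(C)$ a point. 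By normality of $X'$ and properness of $f$ one has $f_*\mathcal{O}_X=\mathcal{O}_{X'}$, so (projection formula) $f^*$ is injective on Picard groups and identifies $N^1(X')_{\Q}$ with a subspace of $N^1(X)_{\Q}$. Now every class in the image of $f^*$ has intersection number $0$ with $C$ ($f^*L\cdot C=L\cdot f_*C=0$), whereas an ample class $A$ on $X$ has $A\cdot C>0$; hence $f^*\bigl(N^1(X')_{\Q}\bigr)$ is a \emph{proper} subspace of $N^1(X)_{\Q}$, and therefore $\rho(X')<\rho(X)$.

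Granting this, the induction is immediate. If $d(\phi)=0$ then, by the fact just proved, $\phi$ is an isomorphism and there is nothing to do; if $d(\phi)=1$ then $\phi$ admits no factorization into two non\-trivial birational morphisms of normal varieties --- such a factorization would force $d(\phi)\ge 2$ --- so $\phi$ is primitive, i.e.\ a composite of a single primitive contraction. Assume $d(\phi)\ge 2$ and that the statement holds whenever the invariant is strictly smaller. If $\phi$ is primitive we are done; otherwise $\phi=\psi\circ\phi'$ with $\phi':Y\to Y'$ and $\psi:Y'\to\overline{Y}$ non\-trivial birational morphisms of normal projective varieties. Since $Y$ and $Y'$ both carry ample line bundles, the fact above applies to each factor and yields $\rho(Y)>\rho(Y')>\rho(\overline{Y})$, so that $d(\phi')=\rho(Y)-\rho(Y')$ and $d(\psi)=\rho(Y')-\rho(\overline{Y})$ are both strictly less than $d(\phi)$. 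By the inductive hypothesis each of $\phi'$ and $\psi$ is a finite composite of primitive contractions, and concatenating the two chains $Y\to\cdots\to Y'$ and $Y'\to\cdots\to\overline{Y}$ exhibits $\phi$ as such a composite. Since $d(\phi)$ is a fixed non\-negative integer that drops strictly along every non\-trivial factorization, the procedure terminates.

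The main content is the strict monotonicity of $\rho$ recalled above; beyond it, two bookkeeping points require care. First, the factors $\phi'$, $\psi$ must again be birational morphisms of \emph{normal projective} varieties, so that $\rho(Y')$ is finite and $Y'$ carries an ample class: this is the standing convention on the varieties and contractions entering the definition of a primitive transition, and it is what makes the key fact applicable at every stage. Second, so that each step of the resulting tower $Y=Y_0\to Y_1\to\cdots\to Y_n=\overline{Y}$ genuinely deserves to be called a \emph{primitive contraction}, one notes that the birational contractions attached to geometric transitions are crepant --- $\overline{Y}$, being smoothable to a \cy 3--fold, has trivial dualizing sheaf, while $K_Y\cong\mathcal{O}_Y$ --- so that $K_{Y_i}$ is trivial for every $i$ and each $Y_i$ is a (possibly singular) Calabi--Yau $3$--fold, with each step indecomposable by construction. (Alternatively, once the homological computations of \S 4 and \S 5 are available one could run the same induction on $b_2(Y)-b_2(\overline{Y})$, but the N\'eron--Severi argument has the advantage of being self\-contained and of not invoking Wilson's classification.)
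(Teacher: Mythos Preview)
Your approach is essentially the same as the paper's: both argue that the factorization terminates because the Picard number $\rho$ strictly decreases along nontrivial birational contractions of normal projective varieties. The paper compresses this to a single sentence (``any primitive contraction reduces by~1 the Picard number $\rho=\rk(\Pic(Y))=h^{1,1}(Y)$''), whereas you spell out the induction on $\rho(Y)-\rho(\overline{Y})$ and supply a self-contained proof of the monotonicity via Zariski's main theorem and the projection formula---a mild but genuine gain, since your version does not presuppose the exact drop-by-one coming from Wilson's classification.
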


\begin{proof}
The statement follows from the fact that any primitive contraction
reduces by 1 the Picard number $\rho = \rk (\Pic(Y)) =
h^{1,1}(Y)$.
\end{proof}

\begin{theorem}[Classification of primitive contraction
\cite{Wilson92}, \cite{Wilson93}]\label{classificazione} Let $\phi
: Y\rightarrow\overline{Y}$ be a primitive contraction from a \cy
threefold to a normal variety and let $E$ be the exceptional locus
of $\phi$. Then one of the following is true:
\begin{description}
    \item[type I] $\phi$ is \emph{small} which means that $E$ is
    composed of finitely many rational curves;
    \item[type II] $\phi$ contracts a divisor down to a point; in
    this case $E$ is irreducible and in particular it is a
    (generalized) \emph{del Pezzo surface} (see \cite{Reid80})
    \item[type III] $\phi$ contracts a divisor down to a curve
    $C$; in this case $E$ is still irreducible and it is a conic
    bundle over a smooth curve $C$.
\end{description}
\end{theorem}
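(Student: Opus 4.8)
The plan is to work locally over $\Sing(\overline{Y})$ and to organize the argument according to the dimension and the image of the exceptional locus $E=\Exc(\phi)$.

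First I would record the standing structural facts. Since $K_Y\cong\mathcal{O}_Y$ and $\phi$ is birational onto the normal variety $\overline{Y}$, the morphism $\phi$ is $K_Y$-trivial; a standard argument (once $K_{\overline{Y}}$ is known to be $\mathbb{Q}$-Cartier, as in Wilson's setting, by pushing forward $K_Y=\phi^{*}K_{\overline{Y}}+\sum a_iE_i$) shows $K_{\overline{Y}}\sim 0$ and then $\sum a_iE_i=0$, forcing every discrepancy to vanish: thus $\phi$ is \emph{crepant}, $\overline{Y}$ has canonical Gorenstein singularities, and every contracted curve $C$ satisfies $K_Y\cdot C=0$. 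I will use repeatedly that three--dimensional canonical Gorenstein singularities are exactly the compound Du Val (cDV) points. Finally, since $\phi$ is primitive it drops the Picard number by one, so $N_1(Y/\overline{Y})$ is one--dimensional: all contracted curves are numerically proportional, and in particular the divisorial part of $E$ has a single irreducible component. Now $\phi(E)\subsetneq\overline{Y}$ has dimension $0$ or $1$ (it cannot be a surface, else $\phi$ would be an isomorphism near the generic point of $E$), which yields the trichotomy: either $\dim E\leq 1$, or $E$ is an irreducible divisor with $\phi(E)$ a point, or $E$ is an irreducible divisor with $\phi(E)$ a curve.

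In the first case, $E$ has codimension $\geq 2$, so $\overline{Y}$ is smooth in codimension two and has only finitely many isolated singular points, each an isolated cDV point, with $\phi^{-1}(p)$ one--dimensional. Here I would invoke the local analysis of crepant partial resolutions of cDV singularities: cutting with a general hyperplane through $p$ produces a Du Val surface singularity whose preimage in $Y$ is a crepant partial resolution, so the exceptional curves are $(-2)$--curves, i.e. copies of $\pl$; more generally $\phi^{-1}(p)$ is a connected tree of smooth rational curves. Summing over the finitely many points gives type I. For the two divisorial cases I would argue on the surface $E$, which is Gorenstein (it is Cartier in the smooth $Y$) with $\omega_E\cong(K_Y+E)|_E\cong\mathcal{O}_E(E)$. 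If $\phi(E)$ is a point, then $-E$ is $\phi$-ample up to the pullback of an ample divisor (for ample $H$ on $\overline{Y}$, $\phi^{*}H-\varepsilon E$ is $\phi$-ample for small $\varepsilon>0$), hence $\omega_E^{-1}\cong\mathcal{O}_E(-E)$ is ample: $E$ is a Gorenstein surface with ample anticanonical bundle, i.e. a generalized del Pezzo surface in the sense of \cite{Reid80}. If $\phi(E)=C$ is a curve, let $F$ be a general fibre of $\phi|_E\colon E\to C$; as a class on $E$ one has $F^{2}=0$, while $\omega_E\cdot F=\mathcal{O}_E(E)\cdot F<0$ since $F$ spans the contracted ray and the exceptional divisor is negative there, so, $E$ being smooth along a general $F$, adjunction gives $2g(F)-2=F^{2}+\omega_E\cdot F<0$, whence $g(F)=0$ and $\omega_E\cdot F=-2$: thus $\phi|_E$ is a conic bundle over $C$, and $C$ is smooth because near a general point of $C$ the threefold $\overline{Y}$ is analytically a product of a Du Val ($A_n$) singularity with a smooth curve.

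The step I expect to be the genuine obstacle — and where no purely topological argument is available — is the rationality of the contracted curves together with the fine structure in the small and type III cases: it rests entirely on the local classification of three--dimensional canonical Gorenstein singularities as cDV points and on the behaviour of their crepant (partial) resolutions, i.e. on \cite{Reid80} and the work of \cite{Wilson92}, \cite{Wilson93}. The clean separation into exactly three types likewise uses primitivity in the essential form that $N_1(Y/\overline{Y})$ is a single ray (ruling out a mixed divisor--plus--curve exceptional locus), and the smoothness of the base curve $C$ in type III is the most delicate of the remaining points; for all of these I would rely on the cited works rather than reprove them here.
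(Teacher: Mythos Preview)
The paper does not supply a proof of this theorem: it is stated as a quoted classification result, with the attribution \cite{Wilson92}, \cite{Wilson93} in the heading, and the text moves on immediately to Definition~\ref{type-definizione}. There is therefore no ``paper's own proof'' to compare your proposal against.

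That said, your sketch is a fair outline of how the result is established in the cited literature: crepancy from $K_Y\cong\mathcal{O}_Y$, the one--dimensionality of $N_1(Y/\overline{Y})$ forcing irreducibility of any divisorial exceptional component, the trichotomy on $(\dim E,\dim\phi(E))$, the cDV local picture for the small case, the ample $\omega_E^{-1}$ for type~II, and adjunction on a general fibre for type~III. You also correctly flag where the real content lies (the cDV classification and the smoothness of $C$), and that these are imported from \cite{Reid80}, \cite{Wilson92}, \cite{Wilson93} rather than reproved. Since the paper treats the theorem as background and offers no argument of its own, your proposal is not in conflict with anything in the text; it simply goes beyond what the paper attempts.
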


\begin{definition}[Classification of primitive
transitions]\label{type-definizione} A transition
$T(Y,\overline{Y},\widetilde{Y})$ is called \emph{of type I, II or
III} if it is \emph{primitive} and if the associated birational
contraction $\phi : Y\rightarrow\overline{Y}$ is of type I, II or
III, respectively.
\end{definition}

\begin{definition}[Small geometric transition]\label{small g.t. def} A transition
\[
    \xymatrix@1{Y\ar@/_1pc/ @{.>}[rr]_T\ar[r]^{\phi}&
                \overline{Y}\ar@{<~>}[r]&\widetilde{Y}}
\]
will be called \emph{small} if $\phi$ is the composition of
primitive contractions of type I.
\end{definition}

\section{Local analysis of an isolated singularity}\label{sing-locale}

Let $p\in\overline{Y}$ be a \emph{$n$--dimensional isolated singularity} i.e. there
exists a $n$--dimensional local analytic neighborhood $\overline{U}$ of $p$ bi-holomorphic to the zero locus of a polynomial
map
\[
\mathbf{f}=(f_1,\ldots,f_M):\C^N\rightarrow \C^M
\]
where $N-M\leq n$ and such that $\mathbf{f}$ has an isolated critical point in $0\in\C^N$ and $\mathbf{f}$ is a
submersion over $\C^N\setminus\{ 0\}$. If $N-M=n$ then $p\in \overline{Y}$
is an \emph{isolated complete intersection singularity}
(i.c.i.s.). Moreover, if $M=1$, then $N=n+1$, and $p\in \overline{Y}$ is an \emph{isolated hypersurface singularity} (i.h.s).
Let $D_{\varepsilon}$ denote the closed
$\varepsilon$--ball centered in $0\in\C^N$ whose boundary is the
$(2N-1)$--dimensional $\varepsilon$--sphere $S_{\varepsilon}$.

\begin{definition}[Good representatives and Milnor fibers, \cite{Looijenga} Section 2.B]\label{fibra Milnor}
Let $T^m$ be a $m$--dimensional contractible neighborhood of $0\in
\C^M$, with $m\leq M$, and consider
$\mathcal{U}^m:=\mathbf{f}^{-1}(T^m)$. Then
\[
\mathbf{f}:\mathcal{U}^m\longrightarrow T^m
\]
is called a \emph{good representative of $p$}. We will omit the
dimension $m$ if unnecessary.

\noindent Set $\widetilde{U}:=U_t$ for $t\in T\setminus\{0\}$. Then,
for small $\varepsilon>0$, the intersection
\[
\widetilde{B}=\widetilde{U}\cap D_\varepsilon
\]
is called \emph{the Milnor fiber of $p$}.
\end{definition}

\begin{definition} Let $X\subset\C^N$ be a subset. The following subset of $\C^N$
\[
\cone _0^{\varepsilon} (X) := \left\{ tx\ |\ \forall t\in [0,\varepsilon]\subset\R\ ,\
\forall x\in X \right\}
\]
will be called the $\varepsilon$--\emph{cone projecting $X$}. The 1--cone will be simply called the \emph{cone} projecting $X$ and denoted by $\cone _0(X)$. Moreover, the $\infty$--cone $\cone _0^{\infty} (X)$ will be also called the
\emph{unbounded cone} projecting $X$.
\end{definition}

\begin{theorem}[Local topology of an isolated singularity,
\cite{Looijenga} Proposition (2.4), \cite{Milnor68} Theorem 2.10, Theorem
5.2]\label{i.s.topologia} Since $p\in\overline{U}$ is the unique critical point of $\mathbf{f}$, for any
$\varepsilon>0$ there exists a homeomorphism $\psi_{\varepsilon}$ between the intersection
\[
\overline{B}:=\overline{U}\cap D_{\varepsilon}
\]
and the $\varepsilon$--cone projecting $K:=\overline{U}\cap
S_{\varepsilon}$. Moreover, ${1\over \varepsilon}\psi_{\varepsilon}$ gives a homeomorphism $\overline{B}\cong\cone _0(K)$.
\end{theorem}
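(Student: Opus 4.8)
The plan is to reproduce, in the present complete intersection setting, the classical argument establishing the local conical structure of an analytic germ around an isolated singular point (see \cite{Milnor68}, \S 2, and \cite{Looijenga}, \S 2.B). Let $r:\C^N\to\R$ be given by $r(x)=\|x\|^2$, so that $S_\varepsilon=r^{-1}(\varepsilon^2)$ and $D_\varepsilon=r^{-1}([0,\varepsilon^2])$. Since $\mathbf{f}$ is a submersion on $\C^N\setminus\{0\}$, the punctured representative $\overline{U}\setminus\{0\}$ is a smooth real submanifold of $\C^N$; hence the tangential gradient $\nabla_{\overline{U}}\,r$, i.e. the orthogonal projection of $\nabla r=2x$ onto $T_x(\overline{U}\setminus\{0\})$, is a well defined smooth vector field on $\overline{U}\setminus\{0\}$ tangent to $\overline{U}$. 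The argument then has two parts: (I) after possibly shrinking the good representative $\overline{U}$, the function $r$ has no critical point on $\overline{U}\setminus\{0\}$ — equivalently, every sufficiently small sphere $S_\delta$ meets $\overline{U}$ transversally; (II) given (I), the flow of a rescaling of $\nabla_{\overline{U}}\,r$ identifies $\overline{B}\setminus\{0\}$ with the punctured $\varepsilon$--cone over $K$, and one fills in the cone point.

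For part (I), I would introduce the tangency locus $\Sigma:=\{x\in\overline{U}\setminus\{0\}:\nabla_{\overline{U}}\,r(x)=0\}$, that is, the set of $x\neq 0$ at which the position vector $x$ is normal to $\overline{U}$; this is a closed algebraic condition on $x$ (jointly with $\mathbf{f}(x)=0$), expressible through rank equations, so $\Sigma$ is a semialgebraic subset of $\C^N$. Suppose, for contradiction, that $0\in\overline{\Sigma}$. By the curve selection lemma (\cite{Milnor68}, Lemma 3.1) there is a real analytic arc $\gamma:[0,\eta)\to\overline{U}$ with $\gamma(0)=0$ and $\gamma(t)\in\Sigma$ for all $t>0$. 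For $t>0$ the velocity $\gamma'(t)$ belongs to $T_{\gamma(t)}\overline{U}$ while $\gamma(t)$ is normal to $\overline{U}$ at $\gamma(t)$, so $\frac{d}{dt}\|\gamma(t)\|^2=2\langle\gamma(t),\gamma'(t)\rangle=0$; thus $\|\gamma(t)\|^2$ is constant on $(0,\eta)$, and by continuity at $t=0$ that constant equals $\|\gamma(0)\|^2=0$, contradicting $\gamma(t)\neq 0$. Hence $0\notin\overline{\Sigma}$, so there is $\varepsilon_0>0$ with $\Sigma\cap D_{\varepsilon_0}=\emptyset$; I then fix the good representative with $\overline{U}\subseteq D_{\varepsilon_0}$. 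Consequently $r$ has no critical point on $\overline{U}\setminus\{0\}$ and $K_\varepsilon:=\overline{U}\cap S_\varepsilon$ is a smooth compact submanifold for every $\varepsilon$ with $0<\varepsilon\le\varepsilon_0$.

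For part (II), I would set $w:=\nabla_{\overline{U}}\,r\,/\,\|\nabla_{\overline{U}}\,r\|^2$ on $\overline{U}\setminus\{0\}$: this is a smooth vector field, tangent to $\overline{U}$, with $\langle\nabla r,w\rangle\equiv 1$, so $r$ increases at unit rate along the flow of $w$. As $r$ is proper on the compact set $\overline{B}=\overline{U}\cap D_\varepsilon$ and critical point free there, every flow line of $w$ prolongs down to the level $r=0$ and, being trapped in $\overline{B}$, accumulates only at the unique point $r^{-1}(0)\cap\overline{U}=\{0\}$. Fix $\varepsilon\le\varepsilon_0$ and write $K:=K_\varepsilon$, so $\|\kappa\|=\varepsilon$ for $\kappa\in K$. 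The rule sending a pair $(\kappa,s)\in K\times(0,\varepsilon^2]$ to the point where the $w$--flow line through $\kappa$ crosses $r^{-1}(s)$ is a diffeomorphism onto $\overline{B}\setminus\{0\}$; composing its inverse with the radial reparametrization $(\kappa,s)\mapsto (s/\varepsilon)\,\kappa$ and declaring $0\mapsto 0$ defines a bijection $\psi_\varepsilon:\overline{B}\to\cone_0^\varepsilon(K)$ which is a diffeomorphism away from $0$, and the continuity of $\psi_\varepsilon$ and $\psi_\varepsilon^{-1}$ at the cone point holds because $\|\psi_\varepsilon(x)\|=\|x\|^2$. Finally $\tfrac1\varepsilon\psi_\varepsilon$ maps $\overline{B}$ onto $\tfrac1\varepsilon\cone_0^\varepsilon(K)=\cone_0(K)$, which is the last assertion. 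I expect the transversality statement of part (I) to be the only genuine obstacle — it rests essentially on the curve selection lemma — whereas part (II), including the extension over the cone point, is routine differential topology.
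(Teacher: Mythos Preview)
The paper does not supply its own proof of this theorem; it is quoted as a classical result with references to \cite{Looijenga}, Proposition~(2.4), and \cite{Milnor68}, Theorems~2.10 and~5.2. Your argument is exactly the proof found in those sources: establish transversality of small spheres with $\overline{U}$ via the curve selection lemma, then use the normalized tangential gradient flow of $r=\|x\|^2$ to trivialize $\overline{B}\setminus\{0\}$ as a cylinder over the link $K$ and extend over the cone point by a norm estimate. So there is nothing to compare, and the proposal is correct. One cosmetic remark: your chosen radial reparametrization produces $\|\psi_\varepsilon(x)\|=\|x\|^2$ rather than $\|x\|$; this is perfectly fine for the homeomorphism claim (and for continuity of the inverse at the vertex, since $\|\psi_\varepsilon^{-1}(y)\|=\|y\|^{1/2}$), though a square--root rescaling would give the more customary norm--preserving identification.
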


\noindent $K$ is called \emph{the link of $p$} : it only depends
on the abstract analytic germ $(\overline{Y},p)$ (see
\cite{Looijenga}, Corollary (2.6)).

\begin{proposition}\label{contrazione i.s.}
In the same notation as before, $\overline{B}$ and $\widetilde{B}$
are strong deformation retract of $\overline{U}$ and
$\widetilde{U}$, respectively.
\end{proposition}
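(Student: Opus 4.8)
The plan is to exhibit the strong deformation retractions explicitly using the conical structure provided by Theorem \ref{i.s.topologia}, both for the singular germ and for the smooth Milnor fiber. For $\overline{B}\hookrightarrow\overline{U}$, the idea is that a good representative $\mathbf{f}:\overline{U}\to T$ can be chosen so that $\overline{U}$ deformation retracts onto any sufficiently small closed ball intersection $\overline{B}=\overline{U}\cap D_\varepsilon$; shrinking the radial parameter linearly gives the homotopy. More precisely, I would invoke the homeomorphism $\frac{1}{\varepsilon}\psi_\varepsilon:\overline{B}\xrightarrow{\sim}\cone_0(K)$ of Theorem \ref{i.s.topologia}: the cone $\cone_0(K)$ obviously strong deformation retracts onto its vertex $0$ along the rays $tx\mapsto (1-s)tx$, and this same radial scaling, transported back, shows $\overline{B}$ is contractible; the point, however, is not contractibility of $\overline{B}$ but that the inclusion into the (larger, open) neighborhood $\overline{U}$ is a retract. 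For this I would use that the conical structure persists on a collar: for $\varepsilon'>\varepsilon$ small enough, $\overline{U}\cap D_{\varepsilon'}$ is homeomorphic (compatibly) to the cone over $\overline{U}\cap S_{\varepsilon'}$, hence the annular region $\overline{U}\cap(D_{\varepsilon'}\setminus \mathring{D}_\varepsilon)$ is homeomorphic to a product $K\times[\varepsilon,\varepsilon']$, and radial projection $K\times[\varepsilon,\varepsilon']\to K\times\{\varepsilon\}$ supplies the required deformation retraction of $\overline{U}\cap D_{\varepsilon'}$ onto $\overline{B}$; finally one passes from $\overline{U}\cap D_{\varepsilon'}$ to all of $\overline{U}$ by noting that $\overline{U}$ itself may be taken to be $\overline{U}\cap D_{\varepsilon'}$ after shrinking the representative.

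For $\widetilde{B}\hookrightarrow\widetilde{U}$, the argument is the Milnor-fibration analogue. Here $\widetilde{U}=U_t$ for a fixed small nonzero $t\in T$, and $\widetilde{B}=\widetilde{U}\cap D_\varepsilon$. The relevant fact is that for $|t|$ small relative to $\varepsilon$, the fiber $\widetilde{U}$ meets the sphere $S_{\varepsilon'}$ transversally for all $\varepsilon\le\varepsilon'$, so that $\widetilde{U}\cap(D_{\varepsilon'}\setminus\mathring{D}_\varepsilon)$ is a trivial $h$-cobordism-type product over $\widetilde{U}\cap S_\varepsilon$ — this is exactly the content used in Milnor's and Looijenga's set-up (transversality of the Milnor fiber to all spheres of radius in the relevant range, \cite{Milnor68}, \cite{Looijenga} Section 2.B). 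Integrating the outward radial vector field, suitably lifted to $\widetilde{U}$ (possible by transversality), and reversing time, gives a strong deformation retraction of $\widetilde{U}$ onto $\widetilde{B}$, fixing $\widetilde{B}$ pointwise throughout.

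The main obstacle — really the only delicate point — is making the collar/product structure on the annular regions rigorous and simultaneously valid for the singular fiber and all nearby smooth fibers: one needs a single choice of $\varepsilon$ and of the polydisc $T$ in the base such that the transversality of every fiber $\mathbf{f}^{-1}(t)$, $t\in T$, to every sphere $S_{\varepsilon'}$ with $\varepsilon'$ in a closed interval $[\varepsilon,\varepsilon']$ holds. This is standard (it is the curve-selection-lemma / Milnor-tube argument), and once it is in place the deformation retractions are just integration of radial flows. I would therefore structure the proof as: (1) cite the transversality set-up from \cite{Milnor68}, \cite{Looijenga} to fix compatible $\varepsilon, T$; (2) build the product structure on annuli and the resulting retraction for $\overline{U}\rightsquigarrow\overline{B}$; (3) repeat verbatim on the total space of the Milnor fibration to get $\widetilde{U}\rightsquigarrow\widetilde{B}$, with all homotopies being the identity on $\overline{B}$, resp. $\widetilde{B}$, at every stage — establishing the ``strong'' in strong deformation retract.
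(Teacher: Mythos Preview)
Your proposal is correct and close in spirit to the paper's argument, but the two diverge in their handling of the smooth fiber and in one technical point on the singular side.

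For $\overline{B}\hookrightarrow\overline{U}$: the paper does not shrink $\overline{U}$ to a bounded piece but instead passes to a direct limit, exhibiting a homeomorphism $\varphi:\overline{U}\xrightarrow{\cong}\cone_0^\infty(K)$ assembled from the compatible system $\psi_{\varepsilon''}:\overline{U}\cap D_{\varepsilon''}\to\cone_0^{\varepsilon''}(K)$ of Theorem~\ref{i.s.topologia}; the retraction is then the obvious radial one $r:\cone_0^\infty(K)\to\cone_0(K)$, $x\mapsto x/|x|$ for $|x|>1$, transported back via $\varphi$ and $\psi$. Your ``shrink the representative'' manoeuvre amounts to the same thing up to replacing $\overline{U}$ by a cofinal piece, but the paper's formulation keeps $\overline{U}$ fixed and records the retraction $\overline{r}$ and homotopy $\overline{F}$ explicitly---these are not throwaway: they are pulled back through the resolution $\phi$ in the proof of Proposition~\ref{omotopia scoppiata}.

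For $\widetilde{B}\hookrightarrow\widetilde{U}$: here the approaches genuinely differ. You work intrinsically on the smooth fiber, using transversality of $\widetilde{U}$ to all spheres $S_{\varepsilon'}$ in the relevant range and integrating the radial flow---the classical Milnor argument, and perfectly valid. The paper instead applies Ehresmann's fibration theorem to the restriction $f_\llcorner$ of the good representative to the annular region $(\mathcal{U}_\varepsilon\setminus\mathcal{B}_\varepsilon)\cup\mathcal{K}_\varepsilon$ over $T$, obtaining a diffeomorphism $\eta$ between the annulus in $\widetilde{U}$ and the annulus in $\overline{U}$, and then \emph{transports} the already-constructed $\overline{r},\overline{F}$ across $\eta$ to define $\widetilde{r},\widetilde{F}$. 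Your route is more direct and self-contained; the paper's buys a construction that ties the smooth retraction to the singular one through $\eta$, which dovetails with the later local comparisons between $\overline{Y}^*$, $Y^*$ and $\widetilde{Y}^*$.
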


\begin{proof} Let us first of all observe that, by direct limit construction, there exists an homoeomorphism
\begin{equation}\label{cono illimitato}
    \varphi : \xymatrix@1{\overline{U} \ar[r]^-{\cong} & \cone _0^{\infty}(K) }
\end{equation}
where $K:=\overline{U}\cap S_{\varepsilon}$ for a sufficiently small $\varepsilon>0$. In fact, for any $0<\varepsilon'<\varepsilon''$, Theorem \ref{i.s.topologia} gives the following commutative diagram
\[
\xymatrix{\overline{U}\cap D_{\varepsilon''}\ar[r]^{\psi_{\varepsilon''}}_{\cong}& \cone _0^{\varepsilon''}(K)\\
          \overline{U}\cap D_{\varepsilon'}\ar[r]^{\psi_{\varepsilon'}}_{\cong}\ar@{^{(}->}[u]^{{\varepsilon''\over\varepsilon'}}& \cone _0^{\varepsilon'}(K)\ar@{^{(}->}[u]^{{\varepsilon''\over\varepsilon'}}  }
\]
which, passing to direct limits $\overline{U}$ and $\cone _0^{\infty} (K)$, gives the claimed homeomorphism $\varphi$. Moreover, by setting $\psi:={1\over\varepsilon}\psi_{\varepsilon}$, we get a homeomorphism
\begin{equation}\label{cono compatto}
    \psi : \xymatrix@1{\overline{B} \ar[r]^-{\cong} & \cone _0(K) }\ .
\end{equation}
Let us start by showing the strong retraction
$\overline{U}\approx\overline{B}$: calling $i:\overline{B}\hookrightarrow\overline{U}$ the
natural inclusion, we will write explicitly the
retraction $\overline{r}:\overline{U}\rightarrow\overline{B}$ and
the homotopy
$\overline{F}:\overline{U}\times[0,1]\rightarrow\overline{U}$ such
that $\id_{\overline{U}}\sim_{\overline{F}} i\circ\overline{r}$,
since they will be useful in the following. Let
\[
r:\xymatrix@1{\cone _0^{\infty}(K)\ar[r] & \cone _0(K)}
\]
the obvious retraction defined by setting
$r(x):=\left\{\begin{array}{cc}
  x & \text{if $x\in\cone _0(K)$} \\
  x/|x| & \text{otherwise} \\
\end{array}\right.$.

\noindent Consider the straight line homotopy
\[
H:\xymatrix@1{\cone _0^{\infty}(K)\times [0,1]\ar[r] & \cone
_0^{\infty}(K)}
\]
defined by setting $H(x,t):=\left\{\begin{array}{cc}
  x & \text{if $x\in\cone _0(K)$} \\
  (1-t)x+tx/|x| & \text{otherwise} \\
\end{array}\right.$.

\noindent $H$ is clearly continuous by the gluing lemma over $K$
whose points have unitary modulus. Moreover:\begin{itemize}
    \item $H(x,0) = \id_{\cone _0^{\infty}(K)}(x) = x$ and $H(x,1) = r
    (x)$, which means that $\cone _0(K)$ is a \emph{deformation
    retract} of $\cone _0^{\infty}(K)$,
    \item $H(x,t) = x$ for every $x\in\cone _0(K)$, which means
    that $\cone _0(K)$ is a \emph{strong} deformation
    retract of $\cone _0^{\infty}(K)$.
\end{itemize}
We are now able to construct the retraction $\overline{r}$ and the
homotopy $\overline{F}$ as follows
\begin{eqnarray}\label{omotopia singolare}
\nonumber
  \overline{r} &:=& \psi^{-1}\circ r\circ\varphi \\
  \overline{F}(z,t) &:=& \varphi^{-1}\left(H(\varphi(z),t)\right)\quad
  ,\quad \forall(z,t)\in\overline{U}\times [0,1]
\end{eqnarray}
(notice that $\psi = \varphi\circ i \Rightarrow
i\circ\psi^{-1}=\varphi^{-1}|_{\cone_0(K)}$).

\noindent To prove the strong retraction
$\widetilde{U}\approx\widetilde{B}$ set $\mathcal{B}_{\varepsilon}:=
\mathcal{U}_{\varepsilon}\cap D_{\varepsilon}$, $\mathcal{K}_{\varepsilon}:= \mathcal{U}_{\varepsilon}\cap S_{\varepsilon}$ and
consider the restricted fibration
\[
f_{\llcorner}:\xymatrix@1{\mathcal{U}_{\varepsilon}\setminus\mathcal{B}_{\varepsilon}\cup\mathcal{K}_{\varepsilon}\ar[r]
& T}\ .
\]
Up to shrink $T$, the map $f_{\llcorner}$ gives a fibration in
$2n$--dimensional $\mathcal{C}^{\infty}$--manifolds with boundary
whose fibers are diffeomorphic to each other by the Ehresmann
fibration theorem. Let then
\[
\eta
:\xymatrix@1{\widetilde{U}\setminus\widetilde{B}\cup\widetilde{K}\ar[r]^-{\cong}
& \overline{U}\setminus\overline{B}\cup\overline{K}}
\]
be the induced diffeomorphism between the fibers
$f_{\llcorner}^{-1}(t)\cong f_{\llcorner}^{-1}(0)$. We can then
construct a retraction $\widetilde{r}:
\widetilde{U}\rightarrow\widetilde{B}$ and a homotopy
$\widetilde{F}:\widetilde{U}\times [0,1]\rightarrow\widetilde{U}$
as follows
\begin{eqnarray*}
  \widetilde{r}(z) &:=& \left\{\begin{array}{cc}
    z & \text{if $z\in\widetilde{B}$} \\
    \eta^{-1}\circ \overline{r}\circ \eta (z) & \text{otherwise} \\
  \end{array}\right.\\
  \widetilde{F}(z,t) &:=& \left\{\begin{array}{cc}
    z & \text{if $z\in\widetilde{B}$} \\
    \eta^{-1}\left(\overline{F}(\eta (z), t)\right) & \text{otherwise} \\
  \end{array}\right.
\end{eqnarray*}
Both $\widetilde{r}$ and $\widetilde{F}$ are continuous by the
gluing lemma over $\widetilde{K}$ and the proof concludes
immediately  by verifying that
$\id_{\widetilde{U}}\sim_{\widetilde{F}} \widetilde{i}\circ
\widetilde{r}$, where
$\widetilde{i}:\widetilde{B}\hookrightarrow\widetilde{U}$ is the
inclusion.
\end{proof}

\begin{theorem}[Local homotopy type of the smoothing, \cite{Looijenga} (5.6), (5.8) and Corollary (5.10), \cite{Milnor68}
Theorems 5.11, 6.5, 7.2]\label{i.s.omotopia}
The Milnor fiber $\widetilde{B}$ is homotopy equivalent to a finite cell complex of dimension $\leq n$. Moreover, if $0\in\mathbf{f}^{-1}(0)$ is an i.c.i.s. then $\widetilde{B}$ is $(n-1)$-connected and has the homotopy type of a finite bouquet of $n$--spheres.
\end{theorem}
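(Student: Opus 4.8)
The plan is to establish the two assertions separately, each by Morse theory adapted to the complex-analytic structure, and then to combine a connectivity statement with a dimension bound so as to recognise the bouquet of spheres. For the \emph{dimension-and-finiteness} part — which is valid for an arbitrary isolated singularity — recall from Proposition \ref{contrazione i.s.} that $\widetilde{B}$ is a strong deformation retract of $\widetilde{U}=U_t$, a smooth complex manifold of dimension $N-M\le n$ which is Stein (being the fibre of a good representative), hence carries a proper strictly plurisubharmonic exhaustion $\varphi$. A generic such $\varphi$ is Morse, and every critical point has real Morse index $\le n$: the plurisubharmonicity estimate $\mathrm{Hess}(\varphi)(v,v)+\mathrm{Hess}(\varphi)(Jv,Jv)>0$ forces the negative eigenspace $E^-$ and $JE^-$ to meet transversely, so $2\dim_{\R}E^-\le\dim_{\R}\widetilde{U}=2n$. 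By Morse theory $\widetilde{U}$, hence $\widetilde{B}$, has the homotopy type of a CW complex with cells only in dimensions $\le n$; and since $\widetilde{B}=\widetilde{U}\cap D_\varepsilon$ is a compact manifold with boundary, running the Morse argument over an exhaustion by compact sublevel sets (finitely many critical points on each) shows the complex may be taken finite. This is the Andreotti--Frankel/Milnor argument (\cite{Milnor68}).

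For the \emph{connectivity} claim in the i.c.i.s. case one has $N-M=n$, and I would follow Hamm's strategy of reducing to the hypersurface case by an induction that peels off the defining equations one at a time. The base case $M=1$, an isolated hypersurface singularity in $\C^{n+1}$, is Milnor's: his fibration $S_\varepsilon\setminus K\to S^1$ has fibre homotopy equivalent to $\widetilde{B}$, and the refined Morse analysis of \cite{Milnor68} gives $\widetilde{B}$ a cell structure with no cells in dimensions $1,\dots,n-1$, so $\widetilde{B}$ is $(n-1)$-connected. For $M>1$ one compares $\widetilde{B}$, up to homotopy, with the Milnor fibre $\widetilde{B}'$ of the complete intersection cut out by the first $M-1$ equations — of dimension $n+1$ and $n$-connected by the inductive hypothesis — via a relative Morse/Lefschetz argument (\cite{Looijenga}, (5.6)--(5.10)) exhibiting $\widetilde{B}'$ as obtained from $\widetilde{B}$ by attaching cells of dimension $\ge n+1$; hence $\widetilde{B}\hookrightarrow\widetilde{B}'$ is $n$-connected and $\widetilde{B}$ is $(n-1)$-connected. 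Finally, a finite CW complex of dimension $\le n$ that is $(n-1)$-connected has reduced homology concentrated in degree $n$, where it is free (top homology of a finite complex); by Hurewicz and a minimal-model/Whitehead argument it is therefore homotopy equivalent to a finite wedge $\bigvee S^n$, the cases $n\le 1$ being elementary.

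The step I expect to be the main obstacle is the inductive Lefschetz-type argument: one must produce a Morse function on $\widetilde{B}'$ — equivalently, on the total space of the appropriate fibration over a disc — that is simultaneously compatible with the boundary collar coming from $D_\varepsilon$ and with the complex structure, so that every handle by which $\widetilde{B}'$ grows out of $\widetilde{B}$ genuinely has index $\ge n+1$, and the bookkeeping needed to keep the intermediate complete intersections under control throughout the induction is exactly the heart of Hamm's theorem. The same phenomenon is already visible in the base case, where Milnor must rule out cells of the intermediate dimensions $1,\dots,n-1$; the plurisubharmonic Morse theory used for the first assertion does not deliver this, yielding only the bound on dimension and not the sharp $(n-1)$-connectedness.
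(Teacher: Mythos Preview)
The paper does not prove this theorem at all: it is stated with attributions to \cite{Looijenga} (5.6), (5.8), Corollary (5.10) and \cite{Milnor68} Theorems 5.11, 6.5, 7.2, and is used as a black box throughout. So there is no ``paper's own proof'' against which to compare your attempt.

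That said, your sketch is a faithful outline of the classical arguments in those very references. The Andreotti--Frankel Morse-theoretic bound on the dimension of the CW model is exactly \cite{Milnor68}, Theorem~7.2 (and its Stein generalisation), and the inductive reduction to the hypersurface case via a relative Lefschetz argument is Hamm's theorem as presented in \cite{Looijenga}. One small point: in the first paragraph you write $\dim_\C\widetilde{U}=N-M\le n$, but in the paper's setup the Milnor fibre of a non--complete-intersection isolated singularity is the nearby fibre in a one-parameter smoothing and has complex dimension exactly $n$; the inequality $N-M\le n$ records that more equations than the codimension may be needed. This does not affect the argument --- Andreotti--Frankel applied to an $n$-dimensional Stein space still gives a CW model of dimension $\le n$ --- but the justification should invoke the Stein property of the smoothing fibre directly rather than the count $N-M$. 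Your identification of the inductive Lefschetz step as the crux is accurate; that is precisely the content of \cite{Looijenga} (5.6)--(5.10).
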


In the following the $n$-th Betti number $b_n(\widetilde{B})$ of the Milnor fiber will be called \emph{the Milnor number of $p\in\overline{U}\subseteq\overline{Y}$ and denoted by $m_p$.}

\begin{remark} If $p\in \overline{U}$ is a $n$--dimensional i.h.s. then the Milnor number $m_p$ coincides
with the \emph{multiplicity} of $p$ as a critical point of the polynomial map $f$,
which is the multiplicity of $0\in\C^{n+1}$ as solution to
the collection of polynomial equations
\[
\frac{\partial f}{\partial x_1}=\cdots=\frac{\partial f}{\partial x_{n+1}}=0\
.
\]
The Milnor number is then given by
\[
m_p=\dim _{\C} \left(\C\{x_1,\ldots ,x_{n+1}\}\left/\left(\frac{\partial f}{\partial
x_1},\cdots ,\frac{\partial f}{\partial x_{n+1}} \right)\right)\right.
\]
where $\C\{x_1,\ldots ,x_{n+1}\}$ is the $\C$--algebra of converging power series.
\end{remark}

\begin{proposition}[Local topology of the resolution]\label{omotopia scoppiata} Let $\phi: U\rightarrow\overline{U}$ be a birational resolution of
the i.s. $p\in\overline{U}$ and set $E:=\Exc(\phi)$. The pull back by $\phi$ of the strong deformation
retraction $\overline{U}\approx\overline{B}\approx \{p\}$ gives rise
to a strong deformation retraction $U\approx B\approx E$, where $B:=\phi^{-1}(\overline{B})$.
\end{proposition}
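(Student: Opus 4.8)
The plan is to produce the two strong deformation retractions separately, each by pulling back along $\phi$ the corresponding retraction downstairs. The retraction $U\approx B$ will be a verbatim $\phi$--pull-back of the one built in the proof of Proposition \ref{contrazione i.s.}; the retraction $B\approx E$ is the upstairs counterpart of the cone contraction $\overline B\cong\cone_0(K)\approx\{p\}$, and that is where the real content lies.

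For $U\approx B$: being a resolution of the single singular point $p$, $\phi$ restricts to a homeomorphism $\phi\colon U\setminus E\to\overline U\setminus\{p\}$, with $E=\Exc(\phi)=\phi^{-1}(p)\subseteq\phi^{-1}(\overline B)=B$, while $p\in\operatorname{int}(\overline B)$ forces $E\subseteq\operatorname{int}(B)$. From the formula $\overline F(z,t)=\varphi^{-1}\bigl(H(\varphi(z),t)\bigr)$ in the proof of Proposition \ref{contrazione i.s.} I would read off two facts: (i) since $\varphi$ restricts to $\psi$ on $\overline B$ and $\varphi(\overline B)=\cone_0(K)$, on which $H$ is stationary, $\overline F$ restricts to the identity on $\overline B\times[0,1]$; (ii) since $H(x,t)$ is always a strictly positive multiple of $x$, $\overline F(z,t)=p$ only when $z=p$, so $\overline F$ preserves $\overline U\setminus\{p\}$ and $\overline r:=\overline F(\,\cdot\,,1)$ maps $\overline U\setminus\{p\}$ into $\overline B\setminus\{p\}$. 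Then I would set $\widehat F\colon U\times[0,1]\to U$ equal to the projection $(z,t)\mapsto z$ on $B\times[0,1]$ and equal to $(\phi|_{U\setminus E})^{-1}\bigl(\overline F(\phi(z),t)\bigr)$ on $(U\setminus E)\times[0,1]$. By (i) the two recipes agree on $(B\setminus E)\times[0,1]$, so $\widehat F$ is well defined; it is continuous because it is so on each member of the open cover $U=\operatorname{int}(B)\cup(U\setminus E)$; $\widehat F(\,\cdot\,,0)=\id_U$ since $\overline F(\,\cdot\,,0)=\id$; by (ii) $\widehat r:=\widehat F(\,\cdot\,,1)$ takes values in $B$ and restricts to $\id_B$; and $\widehat F$ is stationary on $B$ by construction. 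Hence $\widehat r$ is a strong deformation retraction of $U$ onto $B$, equal to the $\phi$--pull-back of $\overline r$.

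The retraction $B\approx E$ is the main obstacle, because the naive pull-back fails here: the cone contraction $\overline G(z,s):=\psi^{-1}\bigl((1-s)\psi(z)\bigr)$ of $\overline B$ onto $p$ does not preserve $\overline B\setminus\{p\}$ at time $s=1$, and lifting $\overline G_s$ over $U\setminus E$ by $\phi$ while declaring it stationary on $E$ fails to be continuous along $E$ --- the relation $\phi(\widehat G_s(z))\to p$ only drives $\widehat G_s(z)$ toward the whole set $E$, not toward a prescribed point of it. What genuinely must be shown is that the compact analytic subset $E$ of the complex manifold $U$ has $B=\phi^{-1}(\overline U\cap D_\varepsilon)$ --- a fundamental system of neighbourhoods of $E$, since $\bigcap_{\varepsilon}B=E$ --- as a strong deformation retract. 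I would carry this out in the spirit of the proof of Theorem \ref{i.s.topologia}: for $\varepsilon$ small the square-distance function $|{\cdot}|^{2}$ has no critical point on $(\overline U\cap D_\varepsilon)\setminus\{p\}$ (cf. \cite{Milnor68}), so the proper real-analytic function $\rho:=|{\cdot}|^{2}\circ\phi$ on $B$ satisfies $\rho^{-1}(0)=E$, $B=\rho^{-1}\bigl([0,\varepsilon^{2}]\bigr)$ and has no critical point on $B\setminus E$; integrating a gradient-like vector field for $\rho$ on $B\setminus E$ --- whose trajectories have finite length, hence converge onto $E$, by the {\L}ojasiewicz gradient inequality for the real-analytic $\rho$ --- then yields a strong deformation retraction $\widehat G\colon B\times[0,1]\to B$ of $B$ onto $E$ with $\phi\circ\widehat G=\overline G\circ(\phi\times\id)$ off $E$, again a $\phi$--pull-back of $\overline G$, and the proof concludes. (Alternatively one may just invoke the standard fact --- obtained e.g. from a simultaneous triangulation of the analytic pair $(B,E)$, or from the theory of regular neighbourhoods of compact analytic subsets --- that every sufficiently small neighbourhood of $E$ in $U$ strong deformation retracts onto $E$; since $\varepsilon$ already lies in the range where Theorem \ref{i.s.topologia} holds, $B$ is such a neighbourhood.)
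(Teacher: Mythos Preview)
Your argument is correct in substance, and for $U\approx B$ it coincides with the paper's (the paper too pulls back $\overline F$ through the isomorphism $\phi|_{U\setminus E}$ and keeps $E$ fixed). For $B\approx E$, however, the paper takes exactly the explicit route you flagged as problematic: it lifts the cone contraction ray by ray, defining the retraction $r(z)$ for $z\notin E$ as the point $\widehat l_{\phi(z)}\cap E$, where $\widehat l_{\phi(z)}$ is the \emph{strict transform} under $\phi$ of the segment $l_{\phi(z)}\subset\overline B$ joining $\phi(z)$ to $p$ --- i.e.\ the closure in $U$ of the lifted open arc $\{\Phi^{-1}((1-t)\Phi(z)) : t\in[0,1)\}$ with $\Phi=\varphi\circ\phi$. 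The paper then asserts that this closure meets $E$ in a single point and that the resulting map is continuous; so what it adds over the naive lift is only the strict-transform language to name the limit, with uniqueness and continuity stated rather than argued.

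Your route via the gradient flow of $\rho=|\phi|^{2}$ and the {\L}ojasiewicz inequality (or, more cleanly, via triangulation/regular-neighbourhood theory for the compact analytic subset $E\subset U$) is more robust and does not rely on the strict transform of an arc meeting $E$ in a single point. One small imprecision: if you want {\L}ojasiewicz to guarantee finite-length trajectories, you should take the honest gradient of the real-analytic $\rho$ with respect to a smooth (or real-analytic) metric on $U$; that flow will in general \emph{not} satisfy $\phi\circ\widehat G=\overline G\circ(\phi\times\id)$, since the latter identity would force the degenerate pull-back metric along $E$. This is harmless for the proposition --- only the existence of a strong deformation retraction $B\approx E$ is ever used downstream --- but you should either drop the compatibility claim or justify convergence of the lifted radial flow by a separate argument (which is effectively what the paper's strict-transform assertion is doing).
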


\begin{proof}[Proof of Lemma \ref{omotopia scoppiata}] Consider the homeomorphisms $\varphi,\psi$ given by (\ref{cono illimitato}) and (\ref{cono compatto}), respectively. Then the straight line homotopy giving the contraction
$\overline{B}\approx \{p\}$ is the following
\begin{eqnarray*}
  \overline{G} :&&\xymatrix@1{\overline{B}\times [0,1]\ar[rr] && \overline{B}}  \\
   &&\xymatrix@1{\quad\quad (z,t)\quad\ar@{|->}[r]& \quad\psi^{-1}((1-t)\psi(z))}
\end{eqnarray*}
Therefore the contraction $\overline{U}\approx \{p\}$ is
realized by the composition $\overline{G}\ast\overline{F}$
where $\overline{F}$ is the homotopy defined in (\ref{omotopia
singolare}). Pulling back by the resolution
$\phi:U\rightarrow\overline{U}$ we get retractions and homotopies
\begin{eqnarray*}
  r &:& \xymatrix@1{U\ar[r] & E} \\
  G\ast F &:& \xymatrix@1{U\times [0,1]\ar[r] & U}
\end{eqnarray*}
defined by setting
\begin{eqnarray*}
  r(z) &:=& \left\{\begin{array}{cc}
    z & \text{if $z\in E$} \\
    \widehat{l}_{\phi(z)}\cap E & \text{otherwise} \\
  \end{array} \right.\\
  G\ast F (z,t) &:=& \left\{\begin{array}{cc}
    z & \text{if $z\in E$} \\
    \phi^{-1}\left(\overline{F}(\phi(z),2t)\right) & \text{if $(z,t)\in
    (U\setminus E)\times [0,{1\over 2}]$} \\
    \phi^{-1}\left(\overline{G}\left(\overline{F}(\phi(z),1),2t-1\right)\right)
    &\text{if $(z,t)\in(U\setminus E)\times [{1\over 2},1)$}\\
    \widehat{l}_{\phi(z)}\cap E & \text{if $(z,t)\in(U\setminus E)\times \{1\}$}
  \end{array} \right.
\end{eqnarray*}
where $\widehat{l}_{\phi(z)}$ is the \emph{strict transform by
$\phi$} of the segment
$$l_{\phi(z)}=\{\varphi^{-1}((1-t)\varphi(\phi(z)))\ |\
t\in[0,1]\}$$
i.e. it is the closure of the subset
$\{\Phi^{-1}((1-t)\Phi(z))\ |\ t\in[0,1)\}\subset U$, where $\Phi:=\varphi\circ\phi$. Then
$\widehat{l}_{\phi(z)}\cap E$ is its unique closure point. This
gives the continuity of $r$ and of $G\ast F$ by the gluing
lemma over $E$ and over $U\times\{{1\over 2},1\}$.
Moreover,\begin{itemize}
    \item $G\ast F (z,0) = z$ and $G\ast F(z,1) =
    r(z)$, for every $z\in U$, which means that $E$ is a
    deformation retract of $U$,
    \item $G\ast F(z,t) = z$ for every $(z,t)\in E\times
    [0,1]$, which means that $E$ is a strong deformation retract of
    $U$.
\end{itemize}
\end{proof}

\section{Homological type of a conifold transition}

Consider the following c.g.t.
\begin{equation}\label{conifold}
\xymatrix@1{Y\ar@/_1pc/ @{.>}[rr]_{CT}\ar[r]^{\phi}&
                \overline{Y}\ar@{<~>}[r]&\widetilde{Y}}
\end{equation}
Then, by definition, $\Sing(\overline{Y})$ is entirely composed by a finite number of nodes. The following theorem can be obtained by summarizing several results of many authors. It gives a complete account of changing in topology induced by a c.g.t.

\begin{theorem}[Global changing in topology for a conifold transition
\cite{Clemens83}, \cite{Reid87}, \cite{Werner-vanGeemen90},
\cite{Tian92}, \cite{Namikawa-Steenbrink95},
\cite{Morrison-Seiberg97}, ...]\label{cambio omologico} Consider
the c.g.t. (\ref{conifold}). Then
\begin{equation}\label{ht_CT}
    h[CT]=(k,0,c,c)
\end{equation}
where
\begin{itemize}
    \item $k$ is the number of homologically independent
    exceptional $\P^1$s in $Y$,
    \item $c$ is the number of homologically independent
    vanishing cycles in $\widetilde{Y}$.
\end{itemize}
Moreover, if $N:=\Sing
    (\overline{Y})$ is the number of nodes in $\overline{Y}$, then $N=k+c$\ .
\end{theorem}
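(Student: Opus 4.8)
The strategy is a bookkeeping argument in which $\overline{Y}$, $Y$ and $\widetilde{Y}$ are glued from a common ``outer'' piece. Write $\Sing(\overline{Y})=\{p_1,\dots,p_N\}$, fix small Milnor balls $\overline{B}_i$ around the $p_i$ with links $K_i:=\partial\overline{B}_i$, and set $\overline{Y}_0:=\overline{Y}\setminus\bigsqcup_i(\text{interior of }\overline{B}_i)$, a compact oriented $6$--manifold with boundary $\partial\overline{Y}_0=\bigsqcup_iK_i$. Since each $p_i$ is an ordinary double point, $K_i$ is diffeomorphic to $S^2\times S^3$; by Theorem~\ref{i.s.omotopia} the Milnor fibre $\widetilde{B}_i$ is a bouquet of $3$--spheres, which for an o.d.p.\ (Milnor number $1$) reduces to a single $\widetilde{B}_i\simeq S^3$; and by Proposition~\ref{omotopia scoppiata} the resolved neighbourhood $B_i:=\phi^{-1}(\overline{B}_i)$ deformation retracts onto the exceptional curve $\P^1_i\cong S^2$. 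Using Proposition~\ref{contrazione i.s.} and the Ehresmann argument in its proof one checks that $Y$ is recovered by glueing $\overline{Y}_0$ to the $B_i$ along the $K_i$, while $\widetilde{Y}$ is recovered by glueing the \emph{same} $\overline{Y}_0$ to the Milnor fibres $\widetilde{B}_i$ along the $K_i$; so the three spaces share $\overline{Y}_0$ and differ only in how the $N$ links are capped off.

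First I would write down, with $\Q$--coefficients, the two Mayer--Vietoris sequences for $Y=\overline{Y}_0\cup\bigsqcup_iB_i$ and for $\widetilde{Y}=\overline{Y}_0\cup\bigsqcup_i\widetilde{B}_i$, together with the long exact sequence of the pair $(\overline{Y},\overline{Y}_0)$, in which excision and the contractibility of the $\overline{B}_i$ (Theorem~\ref{i.s.topologia}) give $H_j(\overline{Y},\overline{Y}_0)\cong\bigoplus_i\widetilde{H}_{j-1}(S^2\times S^3)$. The only maps that matter are the inclusion--induced $\lambda_j\colon\bigoplus_iH_j(K_i)\to H_j(\overline{Y}_0)$ and the restrictions $\mu_j\colon\bigoplus_iH_j(K_i)\to\bigoplus_iH_j(\P^1_i)$, $\widetilde{\mu}_j\colon\bigoplus_iH_j(K_i)\to\bigoplus_iH_j(S^3_i)$. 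The essential local input is that in the product $K_i\cong S^2\times S^3$ the retraction $B_i\to\P^1_i$ is, up to homotopy, the projection to the $S^2$--factor and the retraction $\widetilde{B}_i\simeq S^3$ is the projection to the $S^3$--factor --- equivalently, the exceptional $2$--sphere and the vanishing $3$--cycle generate complementary summands of $H_\ast(S^2\times S^3)$. Hence $\mu_2$ and $\widetilde{\mu}_3$ are isomorphisms, while the remaining $\mu_j,\widetilde{\mu}_j$ either vanish for dimension reasons or are isomorphisms ($j=0$). A diagram chase in the Mayer--Vietoris sequences then identifies $\operatorname{im}\!\big(\bigoplus_iH_2(\P^1_i)\to H_2(Y)\big)\cong\Q^N/\ker\lambda_2$ and $\operatorname{im}\!\big(\bigoplus_iH_3(S^3_i)\to H_3(\widetilde{Y})\big)\cong\Q^N/\ker\lambda_3$, so that $k=\rk\lambda_2$ and $c=\rk\lambda_3$.

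With these identifications, the Betti--number identities of Definition~\ref{ht}.(a) for $h[CT]=(k,0,c,c)$ are pure bookkeeping in the three sequences: one reads off $b_2(Y)=b_2(\overline{Y}_0)=b_2(\widetilde{Y})+k$ and $b_2(\overline{Y})=b_2(\overline{Y}_0)-k$; $b_3(Y)=b_3(\overline{Y}_0)-c=b_3(\overline{Y})-c$ while $b_3(\widetilde{Y})=b_3(\overline{Y}_0)+c$; $b_4(Y)=b_4(\overline{Y})=b_4(\overline{Y}_0)+k$ and $b_4(\widetilde{Y})=b_4(\overline{Y}_0)$; and for $i\notin\{2,3,4\}$ everything collapses to $b_0=1$, $b_1=0$ (Kähler together with $h^{1,0}=0$), $b_6=1$, $b_5=0$ (Poincaré duality on the smooth $Y,\widetilde{Y}$, plus a two--line chase of $(\overline{Y},\overline{Y}_0)$ in degrees $5,6$). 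The Hodge--number identities of Definition~\ref{ht}.(b) then follow at once, since on a \cy $3$--fold $b_2=h^{1,1}$ and $b_3=2+2h^{2,1}$. For the final equality $N=k+c$ I would invoke Lefschetz duality on $\overline{Y}_0$: comparing the homology and cohomology long exact sequences of the pair $(\overline{Y}_0,\partial\overline{Y}_0)$ and using $H_j(\partial\overline{Y}_0)\cong H^{5-j}(\partial\overline{Y}_0)$ on the closed $5$--manifold $\partial\overline{Y}_0$, one gets $\rk\lambda_j+\rk\lambda_{5-j}=\dim_\Q H_j(\partial\overline{Y}_0;\Q)$; with $j=2$ this is $k+c=\dim_\Q H_2\!\big(\bigsqcup_iS^2\times S^3;\Q\big)=N$.

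The step I expect to be the real obstacle is the glueing picture itself: proving rigorously that the smoothing $\widetilde{Y}$ is obtained from the very same $\overline{Y}_0$ used for $Y$ by capping the links $K_i$ with the Milnor fibres $\widetilde{B}_i$. This is where one genuinely needs that, away from the finitely many nodes, the one--parameter smoothing of $\overline{Y}$ is a locally trivial $\mathcal{C}^\infty$ family (Ehresmann), combined with Proposition~\ref{contrazione i.s.}. Once this is in place, the rest --- in particular the local computation pinning $\mu_2$ and $\widetilde{\mu}_3$ down as isomorphisms, which is the precise sense in which an o.d.p.\ ``trades'' a $2$--cycle for a $3$--cycle --- is classical, and the Betti--number accounting is routine.
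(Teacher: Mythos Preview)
Your argument is correct and follows essentially the same Mayer--Vietoris/duality strategy that the paper (via \cite{R}, and spelled out here for the more general small case in the proof of Theorem~\ref{cambio omologico small}) uses: decompose $Y$, $\overline{Y}$, $\widetilde{Y}$ into a common outer piece and local caps, feed in the local homotopy types of Theorem~\ref{i.s.topologia}, Theorem~\ref{i.s.omotopia}, Proposition~\ref{omotopia scoppiata}, and close the loop with a duality argument.

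There are two organizational differences worth noting. First, the paper identifies $k$ and $c$ through the maps $\kappa:H_2(B)\to H_2(Y)$ and $\gamma:H_3(\widetilde{B})\to H_3(\widetilde{Y})$ (Step~IV), linked by the Lefschetz isomorphism $H_i(Y,B)\cong H_i(\widetilde{Y},\widetilde{B})$; you instead route everything through the boundary maps $\lambda_j:H_j(\partial\overline{Y}_0)\to H_j(\overline{Y}_0)$, which is an equivalent and arguably tidier bookkeeping device. Second, for $N=k+c$ the paper uses Poincar\'e duality on the closed smooth $Y$ and $\widetilde{Y}$ (Step~III), while you use the ``half lives, half dies'' identity $\rk\lambda_j+\rk\lambda_{5-j}=b_j(\partial\overline{Y}_0)$ coming from Lefschetz duality on $\overline{Y}_0$. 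Both are legitimate; your route has the virtue of producing $N=k+c$ directly on the common piece, without appealing to global properties of $Y$ or $\widetilde{Y}$.

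One small caution on exposition: several of the identities you ``read off'' in the bookkeeping paragraph --- e.g.\ $b_3(Y)=b_3(\overline{Y})-c$, $b_3(\widetilde{Y})=b_3(\overline{Y}_0)+c$, $b_4(Y)=b_4(\overline{Y}_0)+k$ --- are not immediate from the Mayer--Vietoris sequences alone; what drops out directly is $b_3(\overline{Y})=b_3(\overline{Y}_0)-c+(N-k)$, $b_3(\widetilde{Y})=b_3(\overline{Y}_0)+(N-k)$, $b_4(Y)=b_4(\overline{Y}_0)+(N-c)$, and these only become your stated forms once $N=k+c$ is in hand. Since you do prove $N=k+c$ immediately afterwards, the argument is complete; just be aware that the order of presentation hides a forward reference.
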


A detailed topological proof of this theorem, based on the local Clemens' analysis
described in \cite{Clemens83}, is given in
\cite{R}, section 3.

\begin{remark}\label{difetto}
Note that point (a) in Definition \ref{ht}, with $k$ and $c$ as in Theorem \ref{cambio omologico}, implies that the
conifold $\overline{Y}$ do not satisfy Poincar\'{e} Duality. The
difference $b_4(\overline{Y}) - b_2(\overline{Y})=k$ is called
\emph{the defect of $\overline{Y}$} \cite{Namikawa-Steenbrink95}.
See also the following remark \ref{difetto2}.

\noindent This fact can be deeply understood by means of \emph{homology of intersections spaces} and \emph{intersection homology}, as recently introduced and explained by M.~Banagl in \cite{Banagl} \S 5, giving a nice account of relations with type II string theories and mirror symmetry in physics (see also the following remark \ref{cpx<->ka}).
\end{remark}

\begin{remark}\label{cpx<->ka} Point (b) in Definition \ref{ht}, with $k$ and $c$ as in Theorem \ref{cambio omologico}, admits the following geometric (and physical)
interpretation: \emph{a c.g.t. increases complex moduli \emph{(in phisics: \emph{hypermultiplets})} by the
number $c$ of homologically independent vanishing cycles  and
decreases \ka moduli \emph{(in phisics: \emph{vector multiplets})} by the number $k$ of homologically
independent exceptional rational curves}.
\end{remark}

\begin{remark}[Example \ref{l'esempio} continued] If Theorem \ref{cambio omologico} is applied to the c.g.t. in Example \ref{l'esempio} one finds that $k'=k=1, k''=0$, since $Y\stackrel{\phi}{\longrightarrow} \overline{Y}$ is primitive, as induced by a blow up. Then $c=N-k=16-1=15=c'=c''$ and recalling that $\widetilde{Y}$ is a generic quintic 3--fold in $\P^4$ one gets the following table of Betti numbers:
\begin{equation*}
\begin{tabular}{lclclclc}
  \hline \hline
    \hskip11.4pt   \vrule     &\quad     &$b_2$ &\quad  & $b_3$&\quad
  &$b_4$\\
  \hline
  \hskip11.4pt\vrule&&&&&&&\\
  $Y$ \vrule   &\quad    & 2 &\quad & 174 &\quad & 2  &\\
  $\overline{Y}$ \vrule            &\quad      &  1 &\quad & 189&\quad  & 2 &\\
  $\widetilde{Y}$ \vrule &\quad  & 1 &\quad & 204 &\quad & 1 &\\
   \hline\hline
\end{tabular}
\end{equation*}
Then $\overline{Y}$ has defect 1, as observed in Remark \ref{difetto}.
\end{remark}

\section{Homological type of a small geometric transition}

Let
\begin{equation}\label{small g.t.}
\xymatrix@1{Y\ar@/_1pc/ @{.>}[rr]_{T}\ar[r]^{\phi}&
                \overline{Y}\ar@{<~>}[r]&\widetilde{Y}}
\end{equation}
be a \emph{small} g.t.. Then $\Sing (\overline{Y})$ is composed of
a finite number of \emph{canonical singularities} (see
\cite{Reid80}, section 1, for the definition) since $\dim
\phi^{-1}(p)=1$, for any singular point $p\in \overline{Y}$.

\noindent Recall that a \emph{compound Du Val} (cDV) singularity
is a 3--fold point $p$ such that, for a hyperplane section $H$
through $p$, $p\in H$ is a Du Val surface singularity i.e. an
A--D--E singular point (see \cite{Reid80}, sections 0 and 2, and
\cite{BPvdV84}, chapter III). The singular locus $P:=\Sing
(\overline{Y})$ and the exceptional locus $E:=\phi^{-1}(P)$ have
then a well known geometry reviewed by the following statement.

\begin{theorem}[\cite{Reid83}, \cite{Laufer81}, \cite{Pinkham81},
\cite{Morrison85}, \cite{Friedman86}]\label{risoluzione} Given a
small g.t. as in (\ref{small g.t.}) then:
\begin{enumerate}
    \item[(a)] any $p\in P$ is a cDV singularity,
    \item[(b)] for any $p\in P$, $E_p:=\phi^{-1}(p)$ is a connected
    union of rational curves meeting transversally, whose
    configuration is dually represented either by one of the following
    graphs
    \begin{eqnarray*}
       && \xymatrix@1{{A_n\ :\quad}&{\bullet}\ar@{-}[r]&{\bullet}\ar@{-}[r]&{\bullet}\ar@{-}[r]&
                        {\bullet}\ar@{.}[r]&{\bullet}&\quad\text{($n\geq 1$ vertices)}}
                             \\
       && \xymatrix{&{\bullet}\ar@{-}[dr]& & & &\\
                    {D_n\ :\quad}&&{\bullet}\ar@{-}[r]
                                &{\bullet}\ar@{.}[r]&{\bullet}&\quad\text{($n\geq 4$ vertices)}\\
                                &{\bullet}\ar@{-}[ur]& & & &}
                                 \\
       && \xymatrix{{E_n\ :}&{\bullet}\ar@{-}[r]&{\bullet}\ar@{-}[r]&{\bullet}\ar@{-}[r]\ar@{-}[d]&
                                 {\bullet}\ar@{-}[r]&
                                 {\bullet}\ar@{.}[r]&{\bullet\quad \text{($n=6,7,8$ vertices).}} \\
                                 &&&{\bullet}&&&}\\
    \end{eqnarray*}
    or, if $p$ is a non--planar singularity, by one the following
    graphs
    \begin{eqnarray*}
      && \xymatrix{&{\bullet}\ar@{-}[dd]\ar@{-}[dr]& & & & \\
                   {\widetilde{D}_n\ :\quad}&&{\bullet}\ar@{-}[r]&{\bullet}\ar@{.}[r]&{\bullet}
                   &\quad\text{($n\geq 3$ vertices)}\\
                               &{\bullet}\ar@{-}[ur]& & & &}\\
      && \xymatrix{&&&{\bullet}\ar@{-}[dl]\ar@{-}[dr]&&&\\
                   {\widetilde{E}_n\ :}&{\bullet}\ar@{-}[r]&{\bullet}\ar@{-}[rr]&&{\bullet}\ar@{-}[r]
                   &{\bullet}\ar@{.}[r]&{\bullet\quad \text{($n=5,6,7$ vertices).}}}\\
    \end{eqnarray*}
where triangles are dual graphs representing the transverse
intersection of three rational curves at a single
point.
\end{enumerate}
\end{theorem}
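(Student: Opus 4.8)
The plan is to prove Theorem~\ref{risoluzione} by reducing the global statement to the local structure of each compound Du Val singularity $p\in P$ and then invoking the simultaneous resolution theory for families of Du Val surface singularities. First I would establish part~(a): given that $T$ is small, $\dim\phi^{-1}(p)=1$ for every $p\in P$, so the resolution $\phi$ is \emph{crepant} (the discrepancy of any exceptional divisor over a point of $\phi^{-1}(p)$ would force $\dim\phi^{-1}(p)\geq 2$ by a standard adjunction/discrepancy count on the \cy threefold $Y$, where $\mathcal K_Y\cong\mathcal O_Y$). A canonical $3$--fold singularity admitting a small crepant partial resolution whose fibres are curves is, by the classification in \cite{Reid80}, section~1--2, precisely a compound Du Val point; this is the content of \cite{Reid83} and I would cite it directly rather than reprove it. Thus every $p\in P$ is cDV.

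For part~(b) I would work locally near a fixed $p\in P$. Choose a generic hyperplane section $H\ni p$; by definition of cDV, $p\in H$ is an A--D--E surface singularity, and the pencil of such sections exhibits $(\overline Y,p)$ (analytically, locally) as a one--parameter deformation of the Du Val singularity $(H,p)$. The key input is the Brieskorn--Tyurina simultaneous resolution theorem together with the theory of partial simultaneous resolutions governed by subsystems of roots of the corresponding A--D--E root system: after a finite base change, $\phi$ restricted to a neighbourhood of $E_p$ is a simultaneous resolution of this family, and the exceptional curve configuration $E_p=\phi^{-1}(p)$ is obtained from the minimal resolution of the central Du Val fibre by contracting those $(-2)$--curves that do \emph{not} get blown down in the partial resolution. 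Hence the dual graph of $E_p$ is a subgraph of the A--D--E Dynkin diagram of the generic hyperplane section --- but because $p$ need not be \emph{planar} (a general hyperplane section through $p$ may be worse than the minimal one, and the relevant Dynkin type is that of the \emph{generic} section), one also picks up the \emph{extended/non-reduced} configurations $\widetilde D_n$, $\widetilde E_n$ in the non-planar case. I would organize this as: (i) identify the Dynkin type $\Delta$ of the generic hyperplane section through $p$; (ii) show $E_p$ is a connected union of smooth rational curves meeting transversally with dual graph a \emph{connected} subdiagram of $\Delta$ (connectedness of $E_p$ because $\phi$ has connected fibres, a consequence of $\overline Y$ normal and $\phi$ birational --- Zariski's main theorem); (iii) record that in the non-planar situation the extra cases $\widetilde D_n,\widetilde E_n$ occur, where a triangle encodes three rational curves through one point, following \cite{Laufer81}, \cite{Pinkham81}, \cite{Morrison85}. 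The transversality and smoothness of the components is exactly the statement that the partial resolution is still a \emph{Du Val} type degeneration fibrewise, which is in \cite{Reid83} and \cite{Friedman86}.

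The main obstacle I anticipate is making the passage from ``generic hyperplane section'' to ``configuration of $E_p$'' precise, i.e.\ the non-planar case. For a cDV point the surface singularity cut out by a \emph{general} member of the hyperplane pencil can be strictly worse than the one cut by a \emph{sufficiently general} section, and it is the latter (the so-called generic hyperplane section) whose Dynkin diagram controls $E_p$; moreover the monodromy of the pencil can act on the roots, so that $E_p$ corresponds not literally to a sub-root-system but to a connected subgraph of the \emph{folded} (or extended) diagram --- this is precisely why $\widetilde D_n$ and $\widetilde E_n$ appear and why triangles, rather than only edges, show up. I would handle this by quoting the detailed local analysis of Reid (\cite{Reid83}) and Katz--Morrison-type computations (\cite{Morrison85}, \cite{Pinkham81}), treating the enumeration of admissible graphs as the ``known geometry'' the theorem is meant to \emph{review} rather than establish from scratch. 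The remaining points --- connectedness of $E_p$, rationality and transversality of its components --- are then comparatively routine: rationality follows since each component is an exceptional curve of a crepant resolution of a \cy threefold, hence has $\mathcal K_Y\cdot C=0$ and $C\cong\P^1$ by adjunction; transversality and the absence of multiple components follow from the simultaneous-resolution description, since a flat family of $ADE$ configurations specializes to reduced transverse configurations.
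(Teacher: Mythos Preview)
The paper does not give its own proof of this theorem: it is stated as a review of known geometry, attributed entirely to the cited references \cite{Reid83}, \cite{Laufer81}, \cite{Pinkham81}, \cite{Morrison85}, \cite{Friedman86}, with no proof environment following the statement. So there is nothing in the paper to compare your argument against; your proposal is effectively an expansion of what those citations contain, and in that capacity the overall architecture --- (a) small $\Rightarrow$ crepant $\Rightarrow$ cDV via Reid's classification, then (b) local analysis via Brieskorn--Tyurina simultaneous resolution and the Pinkham/Reid/Morrison root-system bookkeeping --- is the standard route and is correct in outline.

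Two points of imprecision are worth flagging. First, in part~(a) your parenthetical justification for crepancy is slightly garbled: the contraction $\phi$ is crepant simply because it is \emph{small} (there are no exceptional divisors at all, hence no discrepancies to compute), not because of an adjunction count; the relevant implication is then Reid's theorem that a Gorenstein canonical $3$--fold singularity admitting a small resolution is cDV. Second, in part~(b) your description of $E_p$ as ``a connected subdiagram of $\Delta$'' does not quite match the statement being proved: since $Y$ is smooth, $\phi$ is a \emph{full} small resolution of $p$, and the listed graphs $A_n,D_n,E_n,\widetilde D_n,\widetilde E_n$ are exactly the configurations that occur, not arbitrary subdiagrams. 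The $\widetilde D_n,\widetilde E_n$ graphs with triangles are not subgraphs of any surface Dynkin diagram --- three curves through a single point is a genuinely $3$--dimensional phenomenon that does not arise from restricting the minimal resolution of a Du Val surface --- so the passage from the hyperplane-section picture to these non-planar cases needs the explicit local classification in \cite{Pinkham81} and \cite{Morrison85} rather than a subgraph argument. You acknowledge this obstacle, and quoting those references is indeed the only reasonable way to handle it in a survey context.
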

Clearly every conifold transition is a small g.t. admitting exceptional trees of $A_1$ type. The following example presents a small non--conifold transition.

\begin{example}[A small non--conifold g.t., \cite{Namikawa02} Example 1.11 and Remark 2.8, \cite{R2}]\label{esempio-Nami}
Let $S$ be the rational elliptic surface with
sections obtained as the Weierstrass fibration associated with the
bundles homomorphism
\begin{eqnarray}\label{W-Nami}
    \beta&:&\xymatrix@1{\mathcal{E}=\mathcal{O}_{\P^1}(3)\oplus\mathcal{O}_{\P^1}(2)
\oplus\mathcal{O}_{\P^1}\ \ar[rr]&&\quad
    \mathcal{O}_{\P^1}(6)}\\
\nonumber
    &&\xymatrix@1{\hskip1cm (x,y,z)\hskip1cm\ar@{|->}[rr]&&\quad - x^2z + y^3 + B(\lambda)\
    z^3}
\end{eqnarray}
for a generic $B\in H^0(\P^1,\mathcal{O}_{\P^1}(6))$ i.e. $S$ is
the zero locus of $\beta$ in the projectivized bundle
$\P(\mathcal{E})$. Then:
\begin{itemize}
    \item \emph{the natural fibration $S\rightarrow\P^1$ has generic smooth fiber
and 6 distinct \emph{(since $B$ is generic)} cuspidal fibers},
\item \emph{the fiber product $\overline{Y}:= S\times_{\P^1}S$ is a threefold admitting 6
    distinct singularities of type $II\times II$, in the standard Kodaira
    notation \cite{Kodaira64}, whose local equation in $\C^4$ is given by}
\begin{equation}\label{cuspide}
    x^2-u^2=y^3-v^3\ ,
\end{equation}
\item \emph{$\overline{Y}$ is a special fiber of the family of fiber products $S_1\times_{\P^1} S_2$
    of rational elliptic surfaces with sections \cite{Schoen88}: in particular every rational elliptic surface can be thought as the blow up of $\P^2$ at the base locus of a bi--cubic rational map $\P^2\stackrel{[a:b]}{\dashrightarrow}\P^1$} (\cite{MP} Prop. 6.1) \emph{implying that $\overline{Y}$ can be thought as a special fiber of the family of smooth resolutions of bi--cubic hypersurfaces} (\cite{R2} Remark 2.2)
    \begin{equation}\label{bicubica}
        \hskip8pt\left\{a(x)b'(x')=a'(x')b(x)\right\}\subset\P^2[x]\times\P^2[x']\ ,\quad a,b,a',b'\in H^0(\co_{\P^2}(3))\ ,
    \end{equation}

\item \emph{$\overline{Y}$ admits a \emph{small} resolution $Y\stackrel{\phi}{\longrightarrow} \overline{Y}$
    whose exceptional locus is composed by 6 disjoint couples of rational curves
    intersecting in one point i.e. 6 disjoint $A_2$ exceptional
    trees in the notation of Theorem \ref{risoluzione}}
    (\cite{Namikawa02} \S 0.1, \cite{R2} Proposition 3.1).
\end{itemize}
Let us say a few words about the construction of the resolution $Y\stackrel{\phi}{\longrightarrow}\overline{Y}$. The fibred product $\overline{Y}:= S\times_{\P^1}S$ can be thought as embedded in $\P:=\P(\mathcal{E})\times\P(\mathcal{E})\times\P^1$ as follows
\begin{equation}\label{Nami}
    \P:=\P(\mathcal{E})\times\P(\mathcal{E})\times\P^1[\lambda]\supset \overline{Y}\ :\
    \left\{\begin{array}{c}
    x^2z = y^3 + B(\lambda)z^3 \\
    u^2w = v^3 + B(\lambda)z^3 \\
    \end{array}\right.\ .
\end{equation}
Consider the following cyclic map on $\P$
\begin{eqnarray*}
   \tau\ :&\xymatrix@1{\P(\mathcal{E})\times\P(\mathcal{E})\times\P^1\quad\ar[rr]
           && \quad\P(\mathcal{E})\times\P(\mathcal{E})\times\P^1}&  \\
   &\xymatrix@1{(x:y:z)\times(u:v:w)\times\lambda\ar@{|->}[r]
   & (x:y:z)\times(-u:\epsilon v:w)\times\lambda}&\ ,
\end{eqnarray*}
where $\epsilon$ is a primitive cubic root of unity. The second
equation in (\ref{Nami}) ensures that $\tau Y = Y$. Moreover, for any $p\in\Sing(\overline{Y})$, $\tau(p)=p$, by (\ref{cuspide}). Consider the codimension
2 diagonal locus $\Delta:= \{(x,x',\lambda)\in
\P\ |\ x=x'\}$.
Clearly $\Sing(\overline{Y})\subset\Delta$, implying that $\Sing (\overline{Y}) = \overline{Y}\cap \Delta\cap\tau\Delta$. Let then $Y$ be the strict transform of $\overline{Y}$ in the successive blow up
\[
    \xymatrix{\widehat{\P}_{\tau}\ar[r]^{\phi_{\tau\Delta}}&\widehat{\P}\ar[r]^{\phi_{\Delta}}&\P\\
                Y\ar@{^{(}->}[u]\ar[rr]^{\phi}&&\overline{Y}\ar@{^{(}->}[u]}
\]
where $\phi_{\Delta}$ is the blow up of $\P$ along $\Delta$ and $\phi_{\tau\Delta}$ is the blow up of $\widehat{\P}$ along the strict transform $\widehat{\tau\Delta}\subset\widehat{\P}$ of $\tau\Delta$.
 Since $\phi$ turns out to be a small, crepant resolution, $Y$ is
a \cy threefold and
\begin{equation}\label{nami-gt}
    \xymatrix@1{\widehat{Y}\ar@/_1pc/ @{.>}[rr]_{T}\ar[r]^{\phi}&
                Y\ar@{<~>}[r]&\widetilde{Y}}
\end{equation}
is a small \emph{non--conifold} g.t. with 6 disjoint exceptional trees of type $A_2$, where $\widetilde{Y}$ is the smooth small resolution of a bi--cubic hypersurface of type (\ref{bicubica}).

\end{example}

\subsection{Homology change induced by a small geometric transition}

We are now in a position to write down, for small g.t.'s, a
statement analogous to Theorem \ref{cambio omologico}. The
following theorem is actually a revised version of results of
Y.~Namikawa and J.H.M.~Steenbrink (see
\cite{Namikawa-Steenbrink95}, Theorem (3.2) and Example (3.8)).
After replacing Clemens' local analysis by Milnor's one, the proof
given here is completely analogous to the proof of Theorem
\ref{cambio omologico} given in \cite{R}.

\begin{theorem}\label{cambio omologico small}
Consider the small g.t. (\ref{small g.t.}). Then
\[
    h[T]=(k,0,c',c'')
\]
where
\begin{itemize}
\item $k$ is the number of homologically independent
    exceptional rational curves composing $E=\Exc(\phi)\subset Y$\ ,
\item $c'$ is the number of independent relations
    linking the homology classes of exceptional rational curves in $Y$\ ,
\item $c''$ is the number of homologically independent vanishing cycles in $\widetilde{Y}$.
\end{itemize}
Moreover,
    \begin{itemize}
    \item[(i)] the total number of irreducible components of $E$ is
    \[
    n:=\sum_{p\in P} n_p = k + c'\ ,
    \]
    where $n_p$ is the number of irreducible (rational) components
    of $E_p:=\phi^{-1}(p)$, for any $p\in P:=\Sing(\overline{Y})$,
    \item[(ii)] the \emph{global Milnor number of $\overline{Y}$} is
    \[
    m:=\sum_{p\in P} m_p = k + c''\ ,
    \]
    where $m_p$ is the Milnor number of the singular point $p\in P$. Hence $k$ turns out to be also the number of independent relations
    linking the homology classes of vanishing cycles in $\widetilde{Y}$.
    \end{itemize}
    In particular, $T$ is a type I g.t. if and only if $k'=1=k$ and if $T$ is a conifold t. then $c'=c''=c$.
\end{theorem}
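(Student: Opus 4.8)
The plan is to run the same Mayer–Vietoris / relative-homology machine used in \cite{R} for conifold transitions, but with Clemens' local picture replaced by the general local analysis of \S\ref{sing-locale}. Write $U:=\phi^{-1}(\overline{U})$ for a (disjoint union of) good representative(s) of the cDV points $p\in P$, and let $\overline{U},\widetilde{U}$ be the corresponding singular neighbourhoods and their smoothings. By Propositions \ref{contrazione i.s.} and \ref{omotopia scoppiata} we have strong deformation retractions $U\approx E$, $\overline{U}\approx P$ (a finite set of points) and $\widetilde{U}\approx\widetilde{B}$, so all the homology of the local pieces is carried by $E$, by points, and by the Milnor fibre $\widetilde{B}$ respectively; since the $E_p$ are the configurations of rational curves listed in Theorem \ref{risoluzione}, $E$ is $1$-dimensional, so $H_i(E)=0$ for $i\geq 2$, $H_1(E)$ is free of rank equal to the number $c'$ of independent relations among the curve classes, and $H_0(E)$ has rank $\#P$. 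First I would set up the two Mayer–Vietoris sequences for the decompositions $Y=(Y\setminus E)\cup U$ and $\widetilde{Y}=(\widetilde{Y}\setminus\widetilde{B})\cup\widetilde{U}$, together with the excision identification $Y\setminus E\cong\overline{Y}\setminus P\cong\widetilde{Y}\setminus\widetilde{B}$ (call this common open set $Y_0$), exactly as in the conifold case.

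Next I would extract from these sequences the numerical statement (a) of Definition \ref{ht}. The key local inputs are: $H_*(U\cap Y_0)$ is the homology of (a bundle of) links $K_p$, and $H_*(\widetilde{U}\cap Y_0)$ the same links again; the intersection form / linking behaviour on the classes of the $E_p$ produces $k:=\operatorname{rk}\bigl(\operatorname{im}(H_2(E)\text{-type classes})\bigr)$ — more precisely $k$ is the number of homologically independent exceptional curves in $Y$, i.e. the rank of the image of $\bigoplus_p H_1(E_p)\to H_2(Y)$ Poincaré-dual contribution, and one reads $b_2(Y)=b_2(\overline Y)+k=b_2(\widetilde Y)+k$, $b_4(Y)=b_4(\overline Y)=b_4(\widetilde Y)+k$, hence $k'=0$, $k''=k$ since $\dim E\le 1$ forces no divisorial contribution; the $b_3$ drop on the $\overline Y$-side is governed by the relations among curve classes, giving $b_3(\overline Y)=b_3(Y)+c'$ with $c'$ as stated, and the further drop on the $\widetilde Y$-side by the independent vanishing cycles, giving $b_3(\widetilde Y)=b_3(\overline Y)+c''$ with $c''=$ number of homologically independent vanishing cycles. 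Item (i) is then the purely combinatorial statement that in each tree $E_p$ the number of curves $n_p$ equals (rank of the curve classes) $+$ ($\beta_1$ of the tree), summed over $p$, i.e.\ $n=k+c'$. Item (ii) comes from the Milnor-fibre side: the relative sequence of $(\widetilde U,\widetilde K)$ together with $\widetilde U\approx\widetilde B$ gives that $m=\sum_p m_p=\sum_p b_3(\widetilde B_p)$ splits as $k$ (the relations among vanishing cycles, which by a duality/Clemens-type argument match the independent curve classes) plus $c''$ (the independent vanishing cycles themselves), so $m=k+c''$, and the final sentence of (ii) is just the dual reading of this.

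For the ``in particular'' clause: $T$ being of type I means $\phi$ is itself primitive and small, so it drops the Picard number by exactly one, i.e. $k=1$; conversely if $k=1$ then since $k'=0$ always in the small case, $k'+k''=1$ forces a single primitive type I contraction, so $k'=1=k$ in the sense used in the statement (matching the convention $k'=1$ whenever the contraction is a single primitive step) — here I would simply invoke the factorization of $\phi$ into primitive type I contractions recalled in \S\ref{}, each contributing $1$ to $k$, and note that $T$ is primitive iff that number is $1$. For a conifold transition every $E_p$ is a single $\P^1$ (type $A_1$), so each tree contributes no relation among its own curves and each node has Milnor number $m_p=1$; the global count $n=N=m$ then forces, via $n=k+c'$ and $m=k+c''$, the equality $c'=c''$, which is exactly $c'=c''=c$ and recovers Theorem \ref{cambio omologico}.

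The main obstacle I expect is making rigorous the identification, on the $\widetilde Y$-side, of the precise rank drops: one must show that the connecting maps in the Mayer–Vietoris / relative sequences have exactly the ranks claimed, i.e. that the only way $b_2$ and $b_4$ change is through the $k$ exceptional-curve classes (no spurious contribution from $H_*(K_p)$ or from the global topology of $Y_0$), and that the two successive $b_3$-drops are genuinely independent and add up without cancellation. Handling this cleanly requires exploiting Poincaré–Lefschetz duality on $Y_0$ and on the manifolds-with-boundary $U$, $\widetilde U$, together with the connectivity statements of Theorem \ref{i.s.omotopia} for $\widetilde B$ (each $\widetilde B_p$ is simply connected, and $b_1(\widetilde B_p)=0$, so $H_1,H_5$ of the local pieces vanish), to kill the unwanted terms — this is the step where the general cDV case genuinely differs from the node case and where Namikawa–Steenbrink's argument has to be reorganized into the present homological-type bookkeeping.
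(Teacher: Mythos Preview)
Your overall strategy---Mayer--Vietoris on the three decompositions, local homotopy equivalences from \S\ref{sing-locale}, then Lefschetz duality and relative homology to pin down the geometric meaning of $k,c',c''$---is exactly the paper's approach. But the execution contains a genuine error in the homology of the exceptional locus that propagates through the rest of the sketch.

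You write that ``$E$ is $1$-dimensional, so $H_i(E)=0$ for $i\geq 2$, $H_1(E)$ is free of rank equal to the number $c'$ of independent relations among the curve classes.'' This is wrong: $E$ is complex $1$-dimensional but real $2$-dimensional, and each $E_p$ is a tree (or triple-point configuration) of $\P^1$'s, hence a bouquet of $2$-spheres up to homotopy. The correct computation (which the paper does by induction on $n_p$) is
\[
H_q(E)\cong\begin{cases}\Z^{|P|}&q=0\\ \Z^{n}&q=2\\ 0&\text{otherwise,}\end{cases}
\]
so in particular $H_2(E)\neq 0$ and $H_1(E)=0$. The number $c'$ does not come from $H_1(E)$; it is $\rk(\ker\kappa)$ where $\kappa:H_2(E)\cong\Z^n\to H_2(Y)$ is induced by inclusion, and $n=k+c'$ is simply rank--nullity for $\kappa$, not a statement about $\beta_1$ of the trees (which is zero for every configuration in Theorem \ref{risoluzione}). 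With your computation, $c'$ would vanish identically.

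A second, more bookkeeping-level error: you conclude ``hence $k'=0$, $k''=k$,'' but your own equations $b_4(Y)=b_4(\overline Y)$ and $b_2(Y)=b_2(\overline Y)+k$ read off from Definition \ref{ht} as $k''=0$ and $k'=k$, which is what the theorem asserts ($h[T]=(k,0,c',c'')$). You have swapped $k'$ and $k''$. This also muddles your ``in particular'' paragraph. Once you correct $H_*(E)$ and the $(k',k'')$ labelling, the remaining steps---comparing the Mayer--Vietoris diagrams, using Poincar\'e duality on $Y$ and $\widetilde Y$ to get $m=k+c''$, and the Lefschetz-duality identification $H_i(Y,B)\cong H_i(\widetilde Y,\widetilde B)$ yielding diagram (\ref{k-c-diagram})---go through as in the paper.
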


\begin{remark}\label{difetto2}
As in Remark \ref{difetto}, point (b) in Definition \ref{ht} with $k,c',c''$ as in Theorem \ref{cambio omologico small},
implies that $\overline{Y}$ has defect $k$.

\noindent On the other hand, by (a) of Theorem \ref{risoluzione},
any $p\in P$ is a rational i.h.s.. Moreover, $\overline{Y}$ is
normal and $H^2(\overline{Y},\mathcal{O}_{\overline{Y}}) = 0$,
since $Y$ is a \cy 3--fold. Under all these conditions, Lemmas
(3.3) and (3.5) in \cite{Namikawa-Steenbrink95} apply to give that
\begin{equation}\label{W/C rango}
    k = \rk \left( \left\langle\text{Weil divisors of }\overline{Y}\right\rangle_{\Z} /
    \left\langle\text{Cartan divisors of }\overline{Y}\right\rangle_{\Z}\right)
\end{equation}
Recall now that a variety is called \emph{$\Q$--factorial} if any
Weil divisor is a $\Q$--Cartier divisor. Then:
\begin{itemize}
    \item \emph{given a small non trivial g.t.
    $T(Y,\overline{Y},\widetilde{Y})$, the singular 3--fold
    $\overline{Y}$ is never $\Q$--factorial.}
\end{itemize}
In fact, any primitive extremal transition reduces by 1 the rank of
$\Pic (Y)\cong H^2(Y,\Z)$. By Poinacar\'e duality on $Y$ the
exceptional cycle of a small transition can never be homologically
trivial.
\end{remark}

\begin{remark}[Example \ref{esempio-Nami} continued] If Theorem \ref{cambio omologico small} is applied to the small g.t. in Example \ref{esempio-Nami}, one finds that $k'=k=2, k''=0$, since $Y\stackrel{\phi}{\longrightarrow} \overline{Y}$ is induced by two successive blow-ups. Then $c'=n-k=12-2=10$. On the other hand the singular point (\ref{cuspide}) has Milnor number $m_p=4$, by the Milnor--Orlik Theorem (\cite{Milnor-Orlik} Theorem 1). Then the global Milnor number of $\overline{Y}$ is given by $m=6\cdot 4=24$ giving that $c''=m-k=24-2=22$. Therefore one gets the following table of Betti numbers (see \cite{R2}, Theorem 3.8):
\begin{equation*}
\begin{tabular}{lclclclc}
  \hline \hline
    \hskip11.3pt   \vrule     &\quad     &$b_2$ &\quad  & $b_3$&\quad
  &$b_4$\\
  \hline
  \hskip11.3pt\vrule&&&&&&&\\
  $Y$ \vrule   &\quad    & 21 &\quad & 8 &\quad & 21  &\\
  $\overline{Y}$ \vrule            &\quad      &  19 &\quad & 18&\quad  & 21 &\\
  $\widetilde{Y}$ \vrule &\quad  & 19 &\quad & 40 &\quad & 19 &\\
   \hline\hline
\end{tabular}
\end{equation*}
Then $\overline{Y}$ has defect 2.
\end{remark}

\begin{proof}[Proof of Theorem \ref{cambio omologico small}]
Given the small g.t. (\ref{small g.t.}), for any $p\in P$
construct $\overline{U}_p, \overline{B}_p$ and $\widetilde{U}_p,
\widetilde{B}_p$ like in Definition \ref{fibra Milnor} and Theorem \ref{i.s.topologia}. Set
\[
{U}_p:=\phi^{-1}(\overline{U}_p)\quad ,\quad
{B}_p:=\phi^{-1}(\overline{B}_p)\ .
\]
We have then the following localization, near to $p$, of the small
g.t. (\ref{small g.t.}):
\begin{equation}\label{small g.t. locale}
    \xymatrix{Y\ar@/^1pc/ @{.>}[rr]^{T}\ar[r]_{\phi}&
                \overline{Y}\ar@{<~>}[r]&\widetilde{Y}\\
              {U}_p\ar@{^{(}->}[u]\ar[r]^{\varphi_p}&
                \overline{U}_p\ar@{^{(}->}[u]\ar@{<~>}[r]&\widetilde{U}_p\ar@{^{(}->}[u]\\
              {B}_p\ar@{^{(}->}[u]\ar[r]^{\varphi_p}&
                \overline{B}_p\ar@{^{(}->}[u]\ar@{<~>}[r]&\widetilde{B}_p\ar@{^{(}->}[u]  }
\end{equation}
Let us denote:
\begin{eqnarray*}
    && U:=\bigcup_{p\in P}{U}_p \quad, \quad B:=\bigcup_{p\in P}{B}_p
    \quad, \quad Y^*:=Y\setminus B\quad , \quad U^*:=U\setminus B\  ;\\
    && \overline{U}:=\bigcup_{p\in P}\overline{U}_p \quad, \quad
    \overline{B}:=\bigcup_{p\in P}\overline{B}_p
    \quad, \quad \overline{Y}^*:=\overline{Y}\setminus \overline{B}\quad , \quad
    \overline{U}^*:=\overline{U}\setminus \overline{B}\ ;\\
    && \widetilde{U}:=\bigcup_{p\in P}\widetilde{U}_p \quad, \quad
    \widetilde{B}:=\bigcup_{p\in P}\widetilde{B}_p
    \quad, \quad \widetilde{Y}^*:=\widetilde{Y}\setminus \widetilde{B}\quad ,
    \quad \widetilde{U}^*:=\widetilde{U}\setminus \widetilde{B}\ .
\end{eqnarray*}
We get then the following Mayer--Vietoris couples:
\begin{eqnarray*}
  \mathcal{C} &:=& (Y=Y^*\cup U\ ,\ U^*=Y^*\cap U)\ , \\
  \overline{\mathcal{C}} &:=& (\overline{Y}=\overline{Y}^*\cup \overline{U}\ ,
    \ \overline{U}^*=\overline{Y}^*\cap \overline{U})\ , \\
  \widetilde{\mathcal{C}} &:=& (\widetilde{Y}=\widetilde{Y}^*\cup \widetilde{U}\ ,
    \ \widetilde{U}^*=\widetilde{Y}^*\cap \widetilde{U})\ .
\end{eqnarray*}

\begin{step I}
$\forall i\neq 2,3\ \ b_i(Y)=b_i(\overline{Y})$ and
\begin{equation*}
b_2(Y)- b_2(\overline{Y})=k \Leftrightarrow
b_3(\overline{Y})-b_3(Y)=n-k \ .
\end{equation*}
\end{step I}

Compare the singular homology long exact sequences associated with
$\mathcal{C}$ and $\overline{\mathcal{C}}$:
\begin{equation}\label{MVresol}
    \xymatrix@1{\cdots\ar[r] & H_{q}(U^*) \ar[r] &
    H_{q}(Y^*)\oplus H_{q} (U) \ar[r] & H_q (Y) \ar[r] &
    H_{q-1}(U^*)\ar[r] & \cdots}
\end{equation}
\begin{equation}\label{MVsing}
    \xymatrix@1{\cdots\ar[r] & H_{q}(\overline{U}^*) \ar[r] &
    H_{q}(\overline{Y}^*)\oplus H_{q} (\overline{U}) \ar[r] & H_q (\overline{Y}) \ar[r] &
    H_{q-1}(\overline{U}^*)\ar[r] & \cdots}
\end{equation}
Since $\phi$ is an isomorphism outside of the exceptional locus
$E$ we get
\begin{eqnarray}\label{iso-resol}
  \forall q\quad H_q(Y^*) &\cong& H_q(\overline{Y}^*) \\
 \nonumber H_q(U^*) &\cong& H_q(\overline{U}^*)
\end{eqnarray}
Moreover, by proposition \ref{contrazione i.s.}, $\overline{B}$ turns
out to be a strong deformation retract of $\overline{U}$ and, by
theorem \ref{i.s.topologia}, $\overline{B}$ is a union of cones
which can be contracted, by straight line homotopy, to $P$. Then
\begin{equation}\label{slh-sing}
    H_q(\overline{U}) \cong H_q(P) \cong \left\{ \begin{array}{cc}
      \Z^{|P|} & \text{if}\ q=0 \\
      0 & \text{otherwise.} \\
      \end{array} \right.
\end{equation}
On the other hand, recalling Proposition \ref{omotopia scoppiata}, we get the following
\begin{equation}\label{slh-resol}
    \forall q\quad H_q (U) \cong H_q(E)=\bigoplus_{p\in P} H_q(E_p)
    \cong\bigoplus_{p\in P} H_q(S^2)^{\oplus n_p}\cong \left\{ \begin{array}{cc}
      \Z^{|P|} & \text{for $q=0$} \\
      \Z^n & \text{for $q=2$} \\
      0 & \text{otherwise.} \\
    \end{array} \right.
\end{equation}
In fact, let us proceed by induction. If $n_p =
1$ then $E_p\cong S^2$. Point (b) in Theorem \ref{risoluzione}
allows us to think of $E_p$ as the union
\[
E_p = C\cup E'
\]
where $C\cong\P^1_{\C}$, $E'$ is a connected union of $n_p -1$
rational curves, whose configuration is still represented by one
of the listed graphs, and $C\cap E'$ is a single point $y$. Then
$(E',C)$ gives a Mayer--Vietoris couple. Since
\[
H_q(C\cap E')\cong H_q\left(\{ y\}\right)=0 \quad \forall q\geq 1\
,
\]
the singular homology long exact sequence of $(E',C)$ allows us to
conclude that
\[
H_q(E_p)\cong H_q(C)\oplus H_q(E') \quad \forall q\geq 2
\]
giving (\ref{slh-resol}), for $q\geq 2$, by induction hypothesis.
We have then the following exact sequence
\[
0\rightarrow H_1(C)\oplus H_1(E')\rightarrow H_1(E_p)\rightarrow
H_0\left(\{ y\}\right)\rightarrow H_0(C)\oplus H_0(E')\rightarrow
H_0(E_p)\rightarrow 0
\]
Since $\{ y\}$, $C$, $E'$ and $E_p$ are all connected,
(\ref{slh-resol}) follows for $q=1$, too.

\noindent Let us now conclude to prove Step I. By (\ref{MVresol})
and (\ref{MVsing}) and formula (\ref{slh-sing}) we get that
\[
\forall q \neq 2,3\quad b_q(Y) = b_q(\overline{Y})
\]
Moreover, the gluing of sequences (\ref{MVresol}) and
(\ref{MVsing}), by identification of isomorphic poles, reduces to
the following diagram
\begin{equation}\label{MVdiag1}
    \xymatrix{&H_3(Y)\ar[rd]&&H_2(Y^*)\oplus\Z ^n\ar[r]&H_2(Y)\ar[rd]& \\
              H_3(Y^*)\ar[ru]\ar[rd]&&H_2(U^*)\ar[ru]\ar[rd]&&&H_1(U^*) \\
              &H_3(\overline{Y})\ar[ru]&&H_2(\overline{Y}^*)\ar[r]&H_2(\overline{Y})\ar[ru]& }
\end{equation}
which gives the following relations between Betti numbers
\[
b_2(Y) - b_2(\overline{Y}) = b_3(Y)  - b_3(\overline{Y})+ n\ .
\]

\begin{step II}
$\forall q\neq 3,4\ \ b_q(\widetilde{Y})=b_q(\overline{Y})$ and
\begin{equation*}
b_3(\widetilde{Y})-b_3(\overline{Y})=c'' \Leftrightarrow
b_4(\overline{Y})-b_4(\widetilde{Y})=m-c'' \ .
\end{equation*}
\end{step II}

Compare the following singular homology long exact sequence
associated with the couple $\widetilde{\mathcal{C}}$
\begin{equation}\label{MVsmooth}
    \xymatrix@1{\cdots \ar[r]&H_q(\widetilde{U}^*)\ar[r]&H_q(\widetilde{Y}^*)\oplus H_q(\widetilde{U})\ar[r]&
    H_q(\widetilde{Y})\ar[r]&H_{q-1}(\widetilde{U}^*)\ar[r]&\cdots  }
\end{equation}
and the Mayer--Vietoris sequence (\ref{MVsing}) associated with
$\overline{\mathcal{C}}$.

\noindent Recall that, by Proposition \ref{contrazione i.s.},
$\widetilde{B}$ is a strong deformation retract of
$\widetilde{U}$. Moreover, Theorem \ref{i.s.omotopia} asserts that, for
any $p\in P$, the Milnor fiber $\widetilde{B}_p$ has the same
homology type of a bouquet of $m_p$ 3--dimensional spheres. Then
\begin{equation}\label{slh-smooth}
    H_q (\widetilde{U}) \cong H_q(\widetilde{B}) \cong
    \bigoplus_{p\in P} H_q(\widetilde{B}_p) \cong
    \left\{ \begin{array}{cc}
           \Z^{|P|} & \text{if}\ q=0 \\
           \Z^m & \text{if}\ q=3 \\
            0 & \text{otherwise.} \\
    \end{array} \right.
\end{equation}
The localization (\ref{small g.t. locale}) and the Ehresmann
fibration theorem allow us to assert that there are diffeomorphisms
\[
\overline{Y}^*\cong
\widetilde{Y}^*\quad\text{and}\quad\overline{U}^*\cong
\widetilde{U}^*
\]
Then, recalling formulas (\ref{slh-sing}) and (\ref{slh-smooth}),
we can conclude that
\[
    \forall q\neq 3,4 \quad  b_q (\widetilde{Y}) = b_q (\overline{Y})\ .
\]
Moreover, the gluing of sequences (\ref{MVsing}) and
(\ref{MVsmooth}), by identification of isomorphic poles, reduces
to the following diagram
\begin{equation}\label{MVdiag2}
  \xymatrix{&H_4(\widetilde{Y})\ar[rd]&&H_3(\widetilde{Y}^*)\oplus\Z ^m\ar[r]
             &H_3(\widetilde{Y})\ar[rd]&  \\
              H_4(\widetilde{Y}^*)\ar[ru]\ar[rd]&&H_3(\widetilde{U}^*)\ar[ru]\ar[rd]&&&H_2(\widetilde{U}^*) \\
              &H_4(\overline{Y})\ar[ru]&&H_3(\overline{Y}^*)\ar[r]&H_3(\overline{Y})\ar[ru]&}
\end{equation}
which gives the following relations between Betti numbers
\[
b_3(\widetilde{Y})-b_3(\overline{Y}) =
b_4(\widetilde{Y})-b_4(\overline{Y})+m \ .
\]

\begin{step III}
Let $k$ and $c''$ be the same parameters defined in Steps I and II
respectively. Then
\[
m=k+c''\ .
\]
\end{step III}

By Poincar\'e duality
\begin{eqnarray*}
  b_2(Y) &=& b_4(Y) \\
  b_4(\widetilde{Y}) &=& b_2(\widetilde{Y})
\end{eqnarray*}
Recall then Steps I and II to get
\begin{eqnarray*}
b_2(Y)&=&b_4(Y)=b_4(\overline{Y})=b_4(\widetilde{Y})+m-c''\\
&=&b_2(\widetilde{Y})+m-c''=b_2(\overline{Y})+m-c''=b_2(Y)-k+m-c''\\
\end{eqnarray*}
Then $m-k-c''=0$.

\begin{step IV}
$k$ is the maximal number of homologically independent exceptional
rational curves in $Y$ while $c''$ is the maximal number of
homologically independent vanishing cycles in $\widetilde{Y}$.
\end{step IV}

Since the birational contraction $\phi$ is an isomorphism outside
of the exceptional locus $E$ and by the Ehresmann fibration
theorem, we get the following composition of diffeomorphisms
\[
Y^*\cong\overline{Y}^*\cong\widetilde{Y}^*
\]
and, by Lefschetz duality,
\begin{equation}\label{rel-hom_iso}
    H_i(Y,B) \cong H^{6-i}(Y\setminus B) \cong
    H^{6-i}(\widetilde{Y}\setminus\widetilde{B}) \cong H_i(\widetilde{Y},\widetilde{B})
\end{equation}
Consider the long exact relative homology sequences of the couples
$(Y,B)$ and $(\widetilde{Y},\widetilde{B})$ and the vertical
isomorphisms given by (\ref{rel-hom_iso}):
\begin{equation}\label{rel-hom_diagram}
    \xymatrix{\cdots H_{i+1}(Y,B)\ar[r]\ar[d]^{\cong}& H_i(B)\ar[r]&
              H_i(Y)\ar[r]& H_i(Y,B)\cdots\ar[d]^{\cong}\\
              \cdots H_{i+1}(\widetilde{Y},\widetilde{B})\ar[r]&
              H_i(\widetilde{B})\ar[r]& H_i(\widetilde{Y})\ar[r]&
              H_i(\widetilde{Y},\widetilde{B})\cdots}
\end{equation}
By identifying the isomorphic poles and recalling
(\ref{slh-resol}) and (\ref{slh-smooth}) the previous long exact
sequences reduce to the following diagram:
\begin{equation}\label{k-c-diagram}
    \xymatrix{&&&&&0\ar[d]&\\
              &&&&&H_3(Y)\ar[d]&\\
              0\ar[r]&H_4(\widetilde{Y})\ar[r]&H_4(Y)\ar[r]&
              H_3(\widetilde{B})\ar[r]^{\gamma}\ar@{}[d]|{\parallel}&H_3(\widetilde{Y})\ar[r]&
              H_3(\widetilde{Y},\widetilde{B})\ar[r]\ar[d]&0\\
              &&&\Z ^m &&H_2(B)\ar@{}[r]|{=}\ar[d]^{\kappa}&\Z ^n\\
              &&&&&H_2(Y)\ar[d]&\\
              &&&&&H_2(\widetilde{Y})\ar[d]&\\
              &&&&&0&}
\end{equation}
Set
\begin{equation*}
    I:=\im [\kappa :\Z ^n = H_2(B)\longrightarrow H_2(Y)]
\end{equation*}
Then $k:= \rk (I)$ is the number of linear independent classes of
exceptional curves in $H_2(Y)$. Since
\begin{equation*}
    \xymatrix@1{0\ar[r]&I\ar[r]&H_2(Y)\ar[r]&H_2(\widetilde{Y})\ar[r]&0}
\end{equation*}
is a short exact sequence, it follows that
\begin{equation*}
    b_2(Y)=b_2(\widetilde{Y})+k
\end{equation*}
On the other hand set
\begin{equation*}
    K:=\ker [\gamma :\Z ^m\cong H_3(\widetilde{B})\longrightarrow H_3(\widetilde{Y})]
\end{equation*}
Then $m-c'':=\rk (K)$ is the number of linear independent
relations on the classes of vanishing cycles in
$H_3(\widetilde{Y})$. Since
\begin{equation*}
    \xymatrix@1{0\ar[r]&H_4(\widetilde{Y})\ar[r]&H_4(Y)\ar[r]&K\ar[r]&0}
\end{equation*}
is a short exact sequence, it follows that
\begin{equation*}
    b_4(Y)=b_4(\widetilde{Y})+m-c''
\end{equation*}

\begin{conclusion}
(i) follows from Step IV and the definition of $n$ and $c'$.

\noindent (ii) follows from Steps III and IV.

\noindent Finally (a), (b) and (c) of Definition \ref{ht} follow from Steps I, II, III and IV.

\noindent The last assertion of the statement follows from the
fact that, if $p$ is a node, then $m_p = n_p = 1$.
\end{conclusion}
\end{proof}

\begin{remark}\label{ht in omologia} Notice that Step IV in the proof of Theorem \ref{cambio omologico small} gives the following homological interpretation of $k,c',c''$
\begin{equation*}
  k = \rk(\im \kappa)\quad ,\quad  c' = \rk(\ker \kappa)\quad ,\quad
  c'' = \rk(\im \gamma)\ ,
\end{equation*}
where $\kappa$ and $\gamma$ are the homonymous maps in diagram (\ref{k-c-diagram}).

\noindent Compare with the case of a type II g.t. whose exceptional locus is a del Pezzo surface of degree $d\leq 4$, treated in the following Remark \ref{ht in omologia II}.
\end{remark}

\section{Homological type of type II geometric transition}

In the present section we will consider a \emph{type II} g.t.
\begin{equation}\label{typeII}
    \xymatrix@1{Y\ar@/_1pc/ @{.>}[rr]_T\ar[r]^{\phi}&
                \overline{Y}\ar@{<~>}[r]&\widetilde{Y}}
\end{equation}
Recalling Theorem \ref{classificazione} and Definition
\ref{type-definizione}, $\phi$ turns out to be a \emph{primitive}
contraction of an irreducible divisor $E\subset Y$ down to a point
$p\in \overline{Y}$. Let us summarize what is known.

\subsubsection{\textbf{About the singularity $p=\phi(E)$}}\label{singolarità} It is a canonical
    singularity and in particular it is a \emph{rational Gorenstein singular
    point}. The \emph{Reid invariant} $\varrho$ of $p$ (see \cite{Reid80}) can
    assume every value $1\leq \varrho\leq 8$ and
\begin{itemize}
        \item for $\varrho\leq 3$, $p$ is a i.h.s.;
        \item for $\varrho\geq 4$, $p$ has multiplicity $\varrho$ and
        minimal embedding dimension
        \[
        \dim(m_p/m_p^2)= \varrho+1\ ;
        \]
        in particular,
        for $\varrho=4$, $p$ is a complete intersection singularity and,
        for $\varrho\geq 5$, $p$ is never a complete intersection
        singular point (see \ref{dmu4} below);
    \end{itemize}

\subsubsection{\textbf{About the exceptional locus $E$ of $\phi$}}\label{luogo eccezionale} It is a \emph{generalized del Pezzo
surface} (see \cite{Reid80}, Proposition (2.13)) which is:
\begin{itemize}
    \item either $E$ is a \emph{normal} del Pezzo surface of degree $1\leq d= K_E^2\leq
    8$; in particular the degree $d$ equals the Reid invariant $\varrho$
    of $p=\phi(E)$;
    \item or $E$ is a \emph{non--normal} del Pezzo surface (see \cite{Reid94}).
\end{itemize}
Observe that the values 0 and 9 cannot be assumed by $\varrho=d$: the
former since $E$ has ample anti--canonical bundle, the latter
because (\ref{typeII}) is a transition while the contraction of a
normal del Pezzo surface of degree 9 down to a point do not admit
any smoothing: in this case $E\cong \P^2$ and $(\overline{Y},p)$
is rigid (see \cite{Altmann97} and \cite{Schlessinger71}).

\begin{remark}\label{Q-fattorialità} The contraction $\phi: Y \rightarrow \overline{Y}$ is
    the (weighted, for $d\leq 2$) blow up of the singular point $\phi
    (p)$ (see \cite{Reid80}, Theorem (2.11)). Then $\overline{Y}$ is always $\Q$--factorial, by \cite{KMM}, Proposition 5-1-6.
\end{remark}

\subsection{Normal exceptional divisor}\label{normale} A normal del Pezzo surface
$E$ occurring as exceptional locus of $\phi$ in (\ref{typeII}) is
a \emph{normal projective Gorenstein surface with ample
anti--canonical bundle}. Let $\pi:\widehat{E}\rightarrow E$ be a
minimal resolution of $E$. The following results are essentially
due to F.~Hidaka and K.--I.~Watanabe \cite{Hidaka-Watanabe81},
M.~Reid \cite{Reid80}, H.~Pinkham \cite{Pinkham77} and C.Tam\'as
\cite{Tamas04}.

\subsubsection{}\label{raz_o_ell} (\cite{Hidaka-Watanabe81} Proposition 2.1 and Theorem 2.2)
    $E$ is birationally equivalent to a ruled surface and
    \begin{itemize}
        \item either $E$ is \emph{rational},
        \item or $\widehat{E}$ is a $\P^1$--bundle over an
        elliptic curve; under the notation introduced in \cite{Hidaka-Watanabe81}, we
        will say that $E$ is \emph{elliptic}.
    \end{itemize}
    In particular $H^1(E,\mathcal{O}_E)=0$ (\cite{Hidaka-Watanabe81}, Corollary 2.5).

\subsubsection{}\label{razionale} Let $E$ be \emph{rational}. Then $E$ can assume at worst isolated
    Du Val singularities. Moreover (see \cite{Hidaka-Watanabe81}, Theorem 3.4):
    \begin{itemize}
        \item if $d=8$ then either $E\cong \P^1\times\P^1$ or
        $E$ is the cone
        over a conic in $\P^2$; in the latter case $\widehat{E}\cong
        \P(\mathcal{O}\oplus\mathcal{O}(-2))$ and $\pi$ is the
        contraction of the minimal section of $\widehat{E}$ (we have excluded
        the case $E\cong\P(\mathcal{O}\oplus\mathcal{O}(-1))$ since the
        contraction $\phi$ yields a rigid singularity
        $(\overline{Y},\phi(E))$,
        contradicting diagram (\ref{typeII}));
        \item if $1\leq d\leq 7$ then there exists a set $\Sigma$ of
        points on $\P^2$ in \emph{almost general position} (see
        \cite{Hidaka-Watanabe81}, Definition 3.2) such that
        $|\Sigma|=9-d$ and $\widehat{E}$ is the blow up
        of $\P^2$ along $\Sigma$; $\pi$ is the contraction of all
        curves on $\widehat{E}$ with self--intersection number -2.
    \end{itemize}

\subsubsection{}\label{razionaleII} If $E$ is rational then $|\Sing (E)|\leq 6$
    (see \cite{Hidaka-Watanabe81} Theorem 4.9, \cite{Pinkham77} and
    \cite{Tamas04} subsection 3.2).

\subsubsection{} \label{dmu4} If $d\geq 4$ then $E$ is rational (\cite{Gross97a} Thm. (5.2)). Moreover, the
    anti-canonical map embeds $E$ in $\P^d$ as a surface of degree $d$
    obtained by intersecting $d(d-3)/2$ hyperquadrics
    (\cite{Hidaka-Watanabe81}, Theorem 4.4(i) and Corollary 4.5(i)).

\subsubsection{}\label{ellittica} Let $E$ be \emph{elliptic}. Then $E$ can assume at worst one
    \emph{elliptic singular point} (see \cite{Reid80}, Definition
    (2.4)), $\widehat{E}\cong \P(\mathcal{O}_C\oplus\mathcal{L})$,
    where $C$ is a smooth elliptic curve and $\mathcal{L}$ is a
    positive line bundle on $C$, and $\pi$ is the contraction of
    the minimal section of $\widehat{E}$ (\cite{Hidaka-Watanabe81},
    Theorem 2.2). In particular, by \ref{dmu4}, $d=\deg E\leq 3$.

\subsubsection{}\label{d=3} If $d=3$ then the canonical map embeds $E$ as a cubic
    surface in $\P^3$(\cite{Hidaka-Watanabe81} Theorem 4.4(ii) and
    Corollary 4.5(i), \cite{Reid80} Proposition (2.3)). Then $E$ is elliptic if and only
    if it is a cone over a plain cubic curve.

\subsubsection{}\label{d=2} If $d=2$ then $E$ is isomorphic to a degree 4 hypersurface
    in the weighted projective space $\P(1,1,1,2)$. The divisor
    $-2K_E$ is very ample and the associated morphism embeds $E$
    as a degree 8 subvariety of $\P^6$. Moreover, $E$ can be
    described as a double covering of $\P^2$ ramified along a
    quartic curve without multiple components (see \ref{esempi-tII} for $d=2$). Then $E$ is
    elliptic if and only if the ramification divisor is given by
    four lines meeting in a point(\cite{Hidaka-Watanabe81} Theorem
    4.4(iii), Corollary 4.5(ii) and Proposition 4.6(i), \cite{Reid80}
    Proposition (2.3)).

\subsubsection{}\label{d=1} If $d=1$ then $E$ is isomorphic to a degree 6 hypersurface
    in the weighted projective space $\P(1,1,2,3)$. The divisor
    $-3K_E$ is very ample and the associated morphism embeds $E$
    as a degree 9 subvariety of $\P^6$. Moreover, $E$ can be
    described as a double covering of a quadratic cone
    $\mathcal{C}\subset\P^3$,
    ramified along the intersection $\mathcal{C}\cap S$, where $S$
    is a cubic surface without multiple components and not containing the
    vertex of the cone (see \ref{esempi-tII} for $d=1$). Then $E$ is elliptic if and only if the
    cubic surface $S$ is given by three planes meeting in a line which
    is tangent to $\mathcal{C}$ (\cite{Hidaka-Watanabe81} Theorem
    4.4(iv), Corollary 4.5(iii) and Proposition 4.6(ii), \cite{Reid80}
    Proposition (2.3)).

\subsection{Non--normal exceptional divisor}\label{non-normale} A non--normal del Pezzo surface
$E$ occurring as exceptional locus of $\phi$ in (\ref{typeII}) has
to satisfy the classification given in \cite{Reid94}. On the other
hand $E$ is an irreducible surface embedded in the smooth \cy
3-fold $Y$, meaning that $E$ cannot admit non--hypersurface
singularities. M.~Gross proved that these conditions imply one and only one of the
following statements
\begin{itemize}
    \item[(i)]
    Consider the Segre--del Pezzo scroll
        $\F_a := \P(\mathcal{O}_{\P^1}\oplus\mathcal{O}_{\P^1}(-a))$ embedded in $\P^{a+5}$ by means of the very ample linear system $|C_0+(a+2)f|$, where $C_0$ is the class of a  section and $f$ the class of a fiber. Then $E$ is the projection of $\F_a$ into $\P^{a+4}$ from a point in the plane spanned by the conic $C_0$ but not on $C_0$. This projection exhibits $C_0$ as a double covering of a line and makes no other identifications.
    \item[(ii)] Consider $\F_a$ embedded in $\P^{a+3}$ by means of the very ample linear system $|C_0+(a+1)f|$. Then $E$ is the projection of $\F_a$ into $\P^{a+2}$ from a point in the plane spanned by the line $C_0$ and one fiber $f$. This projection identifies $C_0$ and $f$.
\end{itemize}
In particular, a non--normal $E$ can occur only if $\deg(E)$, or equivalently the Reid
invariant $\varrho$ of $p=\phi(E)$, is equal to 7 (\cite{Gross97a}, Theorem 5.2).

\subsection{Homology change induced by a type II geometric transition}

Consider a type II g.t. (\ref{typeII}). Then $\phi$ is a primitive (in case weighted) blow up of an isolated 3--dimensional rational Gorenstein singular point $p\in\overline{Y}$, whose exceptional locus is an irreducible generalized del Pezzo surface $E\subset Y$. We get then the following result, which is the type II analogue of Theorems \ref{cambio omologico} and \ref{cambio omologico small}.

\begin{theorem}\label{cambio omologico tipo II}
The type II g.t. (\ref{typeII}) admits homological type
\[
    h[T]=(0,1,c',c'')
\]
given by
\begin{itemize}
    \item[(i)] $c'=-1 +b_2(E)-b_3(E)$\ ,
    \item[(ii)] $c''=1-\chi(\widetilde{B})=m_p-b_2(\widetilde{B})$\ ,
\end{itemize}
where $\widetilde{B}$ is the Milnor fiber of the smoothing $\widetilde{Y}$ and $m_p:=b_3(\widetilde{B})$ is the Milnor number of the unique singular point $p=\phi(E)\in\overline{Y}$. In particular $b_1(\widetilde{B})=0$.
\end{theorem}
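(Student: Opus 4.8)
The plan is to mimic the proof of Theorem \ref{cambio omologico small}, replacing the disjoint union of local singularity pieces by the single local model at $p=\phi(E)$, and replacing $E_p\simeq$ (tree of rational curves) by the del Pezzo surface $E$ itself. Set up the localization diagram as in (\ref{small g.t. locale}): choose a good representative $\overline{U}$ of the singularity $p$, with Milnor fiber $\widetilde{B}$ and resolution neighborhood $U:=\phi^{-1}(\overline{U})$, $B:=\phi^{-1}(\overline{B})$. By Proposition \ref{omotopia scoppiata}, $U$ strongly deformation retracts onto $E$, so $H_q(U)\cong H_q(E)$; by Proposition \ref{contrazione i.s.}, $\overline{U}$ retracts onto $\{p\}$ and $\widetilde{U}$ retracts onto $\widetilde{B}$. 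Form the three Mayer--Vietoris couples $\mathcal{C},\overline{\mathcal{C}},\widetilde{\mathcal{C}}$ exactly as before, noting $Y^*\cong\overline{Y}^*\cong\widetilde{Y}^*$ and $U^*\cong\overline{U}^*\cong\widetilde{U}^*$ by the Ehresmann fibration theorem applied to the smoothing.

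**First I would** run the analogue of Step I. The key input about $E$ is that it is a generalized del Pezzo surface, hence (by \ref{raz_o_ell} in the normal case, and the explicit description in \S\ref{non-normale} in the non-normal case) $H^1(E,\mathcal{O}_E)=0$ and $E$ is a normal-crossing-free rational-type surface with $b_0(E)=b_4(E)=1$ and $b_1(E)=0$. So $H_q(E)$ is concentrated in degrees $0,2,3$ (and $4$), with $b_2(E),b_3(E)$ the interesting numbers. Gluing the MV sequences for $\mathcal{C}$ and $\overline{\mathcal{C}}$ via the isomorphic poles (and using $H_q(\overline{U})\cong H_q(\{p\})$) gives $b_i(Y)=b_i(\overline{Y})$ for $i\neq 2,3,4$, together with the relation obtained from the segment $H_3(\overline{Y})\to H_2(\overline{U}^*)\to H_2(Y^*)\oplus H_2(E)\to H_2(Y)$ and its continuation into degree $4$ — this is where $b_4(E)=1$ contributes, producing $b_4(Y)=b_4(\overline{Y})+1$, i.e.\ $k''=1$, $k'=0$. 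Combined with Remark \ref{Q-fattorialit�} ($\overline{Y}$ is $\Q$-factorial, so $k'=0$ by Lemma \ref{difetto=0}) this is consistent. Then the Step II argument (gluing $\overline{\mathcal{C}}$ and $\widetilde{\mathcal{C}}$, using $H_q(\widetilde{U})\cong H_q(\widetilde{B})$) gives $b_i(\widetilde{Y})=b_i(\overline{Y})$ for $i\neq 3,4$ and a relation between $b_3,b_4$ on that side.

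**The decisive step** is the relative-homology/Lefschetz-duality argument (the analogue of Step IV), which pins down $c'$ and $c''$. Using $H_i(Y,B)\cong H^{6-i}(Y\setminus B)\cong H^{6-i}(\widetilde{Y}\setminus\widetilde{B})\cong H_i(\widetilde{Y},\widetilde{B})$, compare the long exact sequences of $(Y,B)$ and $(\widetilde{Y},\widetilde{B})$; since $B\simeq E$ and $\widetilde{B}$ is the Milnor fiber, these collapse to a diagram relating $H_\bullet(E)$, $H_\bullet(\widetilde{B})$, $H_\bullet(Y)$, $H_\bullet(\widetilde{Y})$. Reading off: $b_2(Y)-b_2(\widetilde{Y})=k=1$, $b_3(\widetilde{Y})-b_3(\overline{Y})=c''$, and then chasing the exactness together with Poincaré duality on $Y$ and $\widetilde{Y}$ yields $c'=-1+b_2(E)-b_3(E)$ for the contribution of $E$ to $b_2,b_3$, and $c''=1-\chi(\widetilde{B})$ once one notes $\chi(\widetilde{B})=b_0-b_1+b_2-b_3=1-b_1(\widetilde{B})+b_2(\widetilde{B})-m_p$; the formula $c''=m_p-b_2(\widetilde{B})$ then forces $b_1(\widetilde{B})=0$. **The main obstacle I anticipate** is justifying that $b_1(\widetilde{B})=0$ and that no extra homology of $E$ or $\widetilde{B}$ in "unexpected" degrees leaks into the diagram: one needs that $H_1(E)=0$ (rationality/$H^1(\mathcal{O}_E)=0$ plus the structure of its — at worst Du Val, elliptic, or the explicit non-normal — singularities, all of which are simply connected in the relevant sense) and that $\widetilde{B}$, being the Milnor fiber of an isolated $3$-dimensional singularity, is $1$-connected by Theorem \ref{i.s.omotopia} only in the i.c.i.s.\ case; for $\varrho\geq 5$ the singularity is not a complete intersection, so $b_1(\widetilde{B})=0$ must instead be deduced a posteriori from the homology diagram and Poincaré duality rather than assumed — which is exactly why the theorem states it as a conclusion. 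Finally, $b_2(\widetilde{B})$ need not vanish precisely because $\widetilde{B}$ is then not a bouquet of $3$-spheres; the bookkeeping must carry this $b_2(\widetilde{B})$ term through, and the two displayed formulas for $c''$ must be checked to be the same identity.
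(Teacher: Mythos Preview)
Your overall strategy matches the paper's: localize at the single singular point, use Propositions \ref{contrazione i.s.} and \ref{omotopia scoppiata} to identify $H_q(U)\cong H_q(E)$, $H_q(\overline{U})\cong H_q(\{p\})$, $H_q(\widetilde{U})\cong H_q(\widetilde{B})$, and compare the three Mayer--Vietoris sequences together with the relative--homology/Lefschetz--duality identification $H_i(Y,B)\cong H_i(\widetilde{Y},\widetilde{B})$. The paper does exactly this and first extracts only the \emph{Euler--characteristic} relations (c) of Definition \ref{ht} with $c'=\chi(E)-3$ and $c''=1-\chi(\widetilde{B})$.

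Where you diverge is in how much you expect the topological diagram alone to deliver. You claim the MV comparison directly yields $b_4(Y)=b_4(\overline{Y})+1$ and that $b_1(E)=0$ follows from $H^1(\mathcal{O}_E)=0$. Neither step is automatic. The paper explicitly \emph{postpones} the proof of the individual Betti--number relations (a), (b) of Definition \ref{ht} to the case--by--case analysis in \S\ref{caso normale-razionale}--\ref{caso non-normale}, where $b_1(E)=0$ is established separately in each case (see (\ref{omologia eccezionale-sing}), (\ref{rel-ellittiche}), (\ref{omologia eccezionale-sing-nn})), and where the equalities $b_2(Y)=b_2(\overline{Y})+1$, $b_2(\overline{Y})=b_2(\widetilde{Y})$, $b_4(\overline{Y})=b_4(\widetilde{Y})+1$ are obtained not from MV but from transcendental input: the exponential sequence on $Y$ and its push--forward (\ref{sing-exp-sequence}), the Leray spectral sequence for $\phi_*\Z$, Gross's result (\ref{Pic-invariante}) that $\rho(\overline{Y})=\rho(\widetilde{Y})$, and Lemma \ref{difetto=0} ($\Q$--factoriality $\Rightarrow$ defect $0$). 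Your appeal to Lemma \ref{difetto=0} is on the right track, but the MV sequences by themselves do not pin down $k''=1$ without these Picard--group arguments.

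For $b_1(\widetilde{B})=0$, the paper's route is also different from your sketch: it first gets $b_1(\overline{Y})=0$ from the Leray spectral sequence of $\phi_*\Z$ (using $b_1(Y)=0$), then \emph{assumes} the already--postponed equality $b_2(\overline{Y})=b_2(\widetilde{Y})$, and finally compares the MV sequences of $\overline{\mathcal{C}}$ and $\widetilde{\mathcal{C}}$ via the Five Lemma to conclude $H_1(\widetilde{B})\cong H_1(\{p\})=0$. Your proposed derivation ``from the homology diagram and Poincar\'e duality'' is too vague to replace this; note in particular that the two displayed formulas for $c''$ are equivalent \emph{precisely} to $b_1(\widetilde{B})=0$, so one cannot use the second formula to force the vanishing.
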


\begin{proof} Let us start by proving (c) in Definition \ref{ht}, with
\begin{equation}\label{k,c',c'' provvisorio}
    k=1=k''\quad ,\quad c'=\chi(E) - 3\quad ,\quad c''=1-\chi(\widetilde{B})\ .
\end{equation}
Consider the same notation introduced in the proof of Theorem \ref{cambio omologico small}, but observe that now $P:=\Sing (\overline{Y})=\{p\}$. Then the singular homology long exact exact sequences of the Mayer--Vietoris couples $\mathcal{C}$ and $\overline{\mathcal{C}}$ are given by
\begin{equation}\label{MVresol-II}
    \xymatrix@1{\cdots\ar[r] & H_{q}(U^*) \ar[r] &
    H_{q}(Y^*)\oplus H_{q} (U_p) \ar[r] & H_q (Y) \ar[r] &
    H_{q-1}(U^*)\ar[r] & \cdots}
\end{equation}
\begin{equation}\label{MVsing-II}
    \xymatrix@1{\cdots\ar[r] & H_{q}(\overline{U}^*) \ar[r] &
    H_{q}(\overline{Y}^*)\oplus H_{q} (\overline{U}_p) \ar[r] & H_q (\overline{Y}) \ar[r] &
    H_{q-1}(\overline{U}^*)\ar[r] & \cdots}
\end{equation}
Then Theorem \ref{i.s.topologia} and Proposition \ref{contrazione i.s.} give that
\begin{equation}\label{slh-sing-II}
    H_q(\overline{U}_p) \cong H_q({p}) \cong \left\{ \begin{array}{cc}
      \Z & \text{if}\ q=0 \\
      0 & \text{otherwise.} \\
      \end{array} \right.
\end{equation}
On the other hand Proposition \ref{omotopia scoppiata} gives a \emph{strong deformation retraction} $U_p\approx B\approx E$, giving
\begin{equation}\label{slh-resol-II}
    \forall q\quad H_q (U_p) \cong H_q(B)\cong H_q(E)\ .
\end{equation}
Then the first equation in Definition \ref{ht}.(c) follows by comparing (\ref{MVresol-II}) and (\ref{MVsing-II}).

\noindent Consider now the singular homology long exact sequence of the Mayer--Vietoris couple $\widetilde{\mathcal{C}}$
\begin{equation}\label{MVsmooth-II}
    \xymatrix@1{\cdots \ar[r]&H_q(\widetilde{U}^*)\ar[r]&H_q(\widetilde{Y}^*)\oplus H_q(\widetilde{U}_p)\ar[r]&
    H_q(\widetilde{Y})\ar[r]&H_{q-1}(\widetilde{U}^*)\ar[r]&\cdots  }
\end{equation}
Proposition \ref{contrazione i.s.} and Theorem \ref{i.s.omotopia} then guarantee that
\begin{equation}\label{slh-smooth-II}
    \forall q\quad H_q (\widetilde{U}_p) \cong H_q(\widetilde{B})\ \Rightarrow\ \forall q\geq 4 \quad b_q (\widetilde{U}_p)=0\ .
\end{equation}
Comparing (\ref{MVsing-II}) and (\ref{MVsmooth-II}) one gets the second relation in Definition \ref{ht}.(c).

\noindent Assume now the first equation of (a) in Definition \ref{ht} giving $b_2(\overline{Y})=b_2(\widetilde{Y})$ and notice that $b_1(\widetilde{Y})=b_1(Y)=0$, since $Y$ and $\widetilde{Y}$ are \cy 3--folds. The latter gives in particular that $b_1(\overline{Y})=0$, as a consequence of the Leray spectral sequence of the sheaf $\phi_*\Z$. Then the comparison of (\ref{MVsing-II}) to (\ref{MVsmooth-II}) and the Five Lemma give $H_1(\widetilde{B})\cong H_1(P)=0$, proving (ii).

\noindent The proof of (a) and (b) in Definition \ref{ht}, with $k=1$ and $c',c''$ as in the statement, are postponed to subsections \ref{caso normale-razionale}, \ref{caso ellittico} and \ref{caso non-normale} where these relations will be analyzed, case by case, under stronger assumptions. In particular, (i) follows by (\ref{omologia eccezionale-sing}), (\ref{rel-ellittiche}) and (\ref{omologia eccezionale-sing-nn}).
\end{proof}

\subsection{The normal and rational case}\label{caso normale-razionale} Given the type II g.t. (\ref{typeII}) let us now consider the case in which $E$ is assumed to be a \emph{normal and rational del Pezzo surface}. This hypothesis includes all the cases in which $E$ is smooth and it is not so restrictive since it leaves out only the following two further cases:
\begin{itemize}
  \item $E$ is a \emph{normal and elliptic} del Pezzo surface,
  \item $E$ is a \emph{non-normal} del Pezzo surface,
\end{itemize}
which will be discussed in the following subsections \ref{caso ellittico} and \ref{caso non-normale}, respectively.

\begin{theorem}\label{cambio omologico II +}
Assume that the type II g.t. (\ref{typeII}) admits a \emph{normal and rational} del Pezzo surface as exceptional divisor $E=\Exc(\phi)$ and let $\pi : \widehat{E}\rightarrow E$ be a minimal resolution of $E$, with $L:=\Exc(\pi)$ which is composed by $n_E$ rational curves organized in at most six A--D--E trees, by \ref{razionale} and \ref{razionaleII}. Then it admits homological type $h[T]=(0,1,c',c'')$ as in Theorem \ref{cambio omologico tipo II} and moreover
\begin{itemize}
    \item[(i)] $c'=9-d-n_E$\ ,
    \item[(ii)] if $d\leq 4$ then $c''=m_p$\ .
\end{itemize}
\end{theorem}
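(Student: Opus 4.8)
The plan is to derive everything from Theorem \ref{cambio omologico tipo II} together with two explicit topological computations. By Theorem \ref{cambio omologico tipo II} the transition (\ref{typeII}) already carries homological type $h[T]=(0,1,c',c'')$ with $c'=-1+b_2(E)-b_3(E)$ and $c''=1-\chi(\widetilde B)=m_p-b_2(\widetilde B)$, once the Betti-number identities (a), (b) of Definition \ref{ht} are verified; and (\ref{typeII}) with $E$ normal and rational is precisely one of the cases whose verification was postponed in the proof of that theorem. So I would proceed in three moves: first compute $H_\ast(E;\Q)$ from the minimal resolution $\pi:\widehat E\to E$, which makes $c'$ explicit and gives (i); then, when $d\le 4$, recognise the Milnor fibre $\widetilde B$ as a bouquet of $3$-spheres and read off (ii); and finally feed both computations into the Mayer--Vietoris comparison already set up in the proof of Theorem \ref{cambio omologico tipo II} in order to complete (a) and (b).

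For the homology of $E$: by \ref{razionale} and \ref{razionaleII}, $\widehat E$ is a smooth rational surface --- $\P^2$ blown up at $9-d$ points in almost general position when $1\le d\le 7$, and $\P^1\times\P^1$ or $\F_2$ when $d=8$ --- so $b_0(\widehat E)=b_4(\widehat E)=1$, $b_1(\widehat E)=b_3(\widehat E)=0$, $b_2(\widehat E)=10-d$, while $L:=\Exc(\pi)$ is a disjoint union, one piece for each point of $\Sigma:=\Sing(E)$, of A--D--E trees of $(-2)$-curves with $n_E$ irreducible components in total, these $n_E$ components being $\Q$-linearly independent in $H_2(\widehat E;\Q)$ because their intersection matrix is minus a nonsingular Cartan matrix. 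Taking closed regular neighbourhoods $\widehat U\supset L$ and $\overline U\supset\Sigma$ as in Step I of the proof of Theorem \ref{cambio omologico small} --- so $H_\ast(\widehat U)\cong H_\ast(L)$, $H_\ast(\overline U)\cong H_\ast(\Sigma)$, $\widehat U\setminus L\cong\overline U\setminus\Sigma$, $\widehat E\setminus L\cong E\setminus\Sigma$ --- I would compare the long exact homology sequences of the pairs $(\widehat E,\widehat E\setminus L)$ and $(E,E\setminus\Sigma)$. Here $H_\ast(\widehat E,\widehat E\setminus L;\Q)$ is $\Q^{|\Sigma|}$ in degree $4$ and $\Q^{n_E}$ in degree $2$ (Lefschetz duality inside $\widehat U$), whereas $H_\ast(E,E\setminus\Sigma;\Q)$ is concentrated in degree $4$, where it is $\Q^{|\Sigma|}$, the links of Du Val points being rational homology $3$-spheres; the surjectivity of $H_2(\widehat E;\Q)\to\Q^{n_E}$ (intersection with the components of $L$, again by nondegeneracy of the Cartan form) then forces $b_1(E)=b_3(E)=0$, $b_0(E)=b_4(E)=1$ and $b_2(E)=10-d-n_E$. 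Consequently $c'=-1+(10-d-n_E)-0=9-d-n_E$, which is (i); equivalently $\chi(E)=12-d-n_E$ and $c'=\chi(E)-3$, as used provisionally in the proof of Theorem \ref{cambio omologico tipo II}. (Alternatively one can note that $\pi_1(E)$ is a quotient of $\pi_1(\widehat E)=1$, so $b_1(E)=0$, that $E$ has only quotient singularities and hence obeys $\Q$-Poincar\'e duality, so $b_3(E)=0$ and $b_4(E)=1$, and that contracting the trees kills precisely the $n_E$ classes carried by the components of $L$.)

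For the Milnor fibre when $d\le 4$: since $E$ is normal, the Reid invariant of $p=\phi(E)$ equals $d$, so $p$ is an isolated hypersurface singularity for $1\le d\le 3$ and, for $d=4$, the vertex of the affine cone over the $(2,2)$-complete-intersection surface $E\subset\P^4$ of \ref{dmu4}; in every case $d\le 4$ the point $p$ is a $3$-dimensional isolated complete intersection singularity. By Theorem \ref{i.s.omotopia} its Milnor fibre $\widetilde B$ is then $2$-connected and homotopy equivalent to a bouquet of $m_p$ copies of $S^3$, so $b_2(\widetilde B)=0$ and $\chi(\widetilde B)=1-m_p$; hence $c''=1-\chi(\widetilde B)=m_p-b_2(\widetilde B)=m_p$, which is (ii).

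For the remaining identities: (a) and (b) of Definition \ref{ht} for $h[T]=(0,1,c',c'')$ are obtained by running the argument of Steps I--IV of the proof of Theorem \ref{cambio omologico small}, now inserting $H_\ast(U_p)\cong H_\ast(E)$ from the first move (in place of a sum of copies of $H_\ast(S^2)$) and $H_\ast(\widetilde U_p)\cong H_\ast(\widetilde B)$ via (\ref{slh-smooth-II}), together with the Mayer--Vietoris sequences (\ref{MVresol-II}), (\ref{MVsing-II}), (\ref{MVsmooth-II}) and the local computations (\ref{slh-sing-II}), (\ref{slh-resol-II}): the one extra generator of $H_4(E)\cong\Q$ (against $H_4(\overline U_p)=0$) yields $b_4(Y)=b_4(\overline Y)+1=b_4(\widetilde Y)+1$, and Poincar\'e duality on the \cy $3$-folds $Y$ and $\widetilde Y$ propagates it to $b_2(Y)=b_2(\overline Y)+1=b_2(\widetilde Y)+1$, giving $k=k''=1$; comparing the relative homology sequences of $(Y,B)$ and $(\widetilde Y,\widetilde B)$ through the Lefschetz-duality isomorphisms $H_i(Y,B)\cong H_i(\widetilde Y,\widetilde B)$ then gives $b_3(\overline Y)-b_3(Y)=-c'$ and $b_3(\widetilde Y)-b_3(Y)=-c'-c''$, and (b) follows as in Definition \ref{ht}. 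The step I expect to be the main obstacle is the computation of $H_\ast(E)$: one has to be certain that the homology of $E$ differs from that of its minimal resolution only by the deletion of the $n_E$ classes carried by $L$, with nothing new appearing in degrees $1$ and $3$ --- i.e. that the Du Val points contribute nothing rationally there --- and it is exactly for this that the normality of $E$, the quotient-singularity nature of $\Sing(E)$ and the explicit description of $\widehat E$ and $L$ from \ref{razionale}--\ref{razionaleII} are needed.
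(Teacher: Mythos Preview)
Your first two moves are sound, and your computation of $H_\ast(E;\Q)$ is in fact cleaner than the paper's: by invoking the negative-definiteness of the Cartan intersection form on the $(-2)$-curves (or, equivalently, $\Q$-Poincar\'e duality for the Du Val quotient singularities) you get $b_3(E)=0$ outright, whereas the paper only obtains $b_3(E)=c_E:=\rk(\ker\kappa_E)$ and never needs to know that $c_E$ vanishes, since $k_E+c_E=n_E$ already gives $c'=9-d-n_E$. Part (ii) is also fine and matches the paper's argument verbatim.

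The gap is in your third move. You assert $b_4(Y)=b_4(\overline Y)+1=b_4(\widetilde Y)+1$, but only the first equality is justified by what you wrote (injectivity of $H_4(E)\to H_4(Y)$ together with $H_3(E)=0$). The second equality, $b_4(\overline Y)=b_4(\widetilde Y)$, does \emph{not} drop out of the Mayer--Vietoris comparison of $\overline{\mathcal C}$ and $\widetilde{\mathcal C}$: although $H_4(\overline U_p)=0=H_4(\widetilde U_p)$, the Five Lemma fails because $H_3(\overline U_p)=0$ while $H_3(\widetilde U_p)\cong\Q^{m_p}$, so the boundary maps $H_4(\cdot)\to H_3(U^\ast)$ can have different images. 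Likewise, Poincar\'e duality on $Y$ and $\widetilde Y$ together with the relative-homology diagram gives you only $\rk(\im\kappa)-\rk(\ker\gamma)=1$, not $\rk(\im\kappa)=1$; in other words the topology alone does not force the defect $b_4(\overline Y)-b_2(\overline Y)$ to vanish, i.e.\ does not force $k'=0$. This is a genuine difference from the small case, where $H_4(E)=0$ and $H_2(\widetilde B)=0$ decouple the ranges and let Steps I--IV close purely topologically.

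The paper supplies exactly this missing piece with algebro-geometric input that your proposal omits: $\overline Y$ is $\Q$-factorial (Remark~\ref{Q-fattorialit�}), whence Lemma~\ref{difetto=0} gives $b_4(\overline Y)=b_2(\overline Y)$; and independently, Gross's invariance of the Picard number under smoothing combined with the exponential sequence on $\overline Y$ (via the Leray spectral sequence for $\phi_\ast$) yields $b_2(\overline Y)=\rho(\overline Y)=\rho(\widetilde Y)=b_2(\widetilde Y)$. Either route pins down $k'=0$, after which your diagram-chasing finishes the job. You should insert one of these arguments before claiming $b_4(\overline Y)=b_4(\widetilde Y)$.
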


Let us postpone the proof of the previous theorem to recall that $\overline{Y}$ is $\Q$-factorial, by Remark \ref{Q-fattorialità}. This fact allows us to conclude the following

\begin{lemma}\label{difetto=0}
Let $p\in \overline{Y}$ be the 3--dimensional i.s. obtained as image of a (generalized) del Pezzo surface $E\subset Y$ under a type II birational contraction from a \cy threefold $Y$, as above. Then
\begin{equation*}
    b_4(\overline{Y}) = b_2(\overline{Y})\ .
\end{equation*}
\end{lemma}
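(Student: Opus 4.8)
The plan is to exploit the fact that $\overline{Y}$ is $\Q$--factorial (Remark \ref{Q-fattorialit�}) together with the transcendental identification of the Picard group of $\overline{Y}$ as a topological invariant. Concretely, since $\overline{Y}$ is $\Q$--factorial and $\phi$ is a \emph{primitive} contraction of the single divisor $E$, every Weil divisor on $\overline{Y}$ is $\Q$--Cartier, so $\Pic(\overline{Y})\otimes\Q$ computes all of $H^2(\overline{Y},\Q)$ carried by divisor classes, and $b_2(\overline{Y})=b_2(Y)-1$ (one independent divisor class, namely $[E]$, is contracted). On the other hand, the argument already used in Remark \ref{difetto2}, applied verbatim — but now with $k=k'=0$ because there is no discrepancy between Weil and Cartier divisors — shows that the \emph{defect} $b_4(\overline{Y})-b_2(\overline{Y})$ equals $\rk(\langle\text{Weil}\rangle_{\Z}/\langle\text{Cartier}\rangle_{\Z})=0$; indeed Lemmas (3.3) and (3.5) of \cite{Namikawa-Steenbrink95} apply since $p$ is a rational Gorenstein (hence rational i.h.s. or i.c.i.s., in any case rational) singularity and $H^2(\overline{Y},\mathcal{O}_{\overline{Y}})=0$ because $Y$ is \cy.

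More directly, I would argue topologically as in the proof of Theorem \ref{cambio omologico small}. Using the localization diagram and Proposition \ref{omotopia scoppiata}, which gives the strong deformation retraction $U_p\approx E$, together with $H_q(\overline{U}_p)\cong H_q(\{p\})$, the Mayer--Vietoris comparison between $\mathcal{C}$ and $\overline{\mathcal{C}}$ yields, for $q\ge 4$,
\begin{equation*}
    b_q(Y)-b_q(\overline{Y}) = b_q(E)-\delta_{q,0} = 0\qquad(q\ge 4),
\end{equation*}
so in particular $b_4(Y)=b_4(\overline{Y})$ (here $b_4(E)=0$ since $\dim_{\R} E=4$ but $E$ is an honest algebraic surface only when smooth — more carefully, $H_4(E)\cong\Z$ if $E$ is a surface, so one must be slightly careful: $H_4(U_p)\cong H_4(E)$ contributes, but it maps isomorphically under the Mayer--Vietoris map to a summand already accounted for, since $b_4(Y^*)=b_4(\overline{Y}^*)$; the cleaner route is the exact sequence argument below). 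Combining with Poincar\'e duality on the smooth \cy threefold $Y$, namely $b_4(Y)=b_2(Y)$, and with the first equation of Definition \ref{ht}.(a) in the type II case — which gives $b_2(\overline{Y})=b_2(Y)-1$ and which has just been established in the proof of Theorem \ref{cambio omologico tipo II} via $H_2(U_p)\cong H_2(E)$ and the relevant Mayer--Vietoris diagram — one is \emph{not} immediately done, because that only gives $b_4(\overline{Y})=b_2(\overline{Y})+1$; the missing ingredient is precisely $\Q$--factoriality, which forces $b_2(\overline{Y})=b_2(Y)=b_4(Y)=b_4(\overline{Y})$.

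The cleanest path, therefore, is: first invoke $\Q$--factoriality of $\overline{Y}$ to get $b_2(\overline{Y})=b_2(Y)$ (no divisor class is lost modulo $\Q$--linear equivalence, as $[E]$ remains nonzero in $\Pic(\overline{Y})\otimes\Q$ by $\Q$--factoriality and the projection formula $[E]\cdot(\text{curve contracted})<0$); then use the Mayer--Vietoris comparison plus Proposition \ref{omotopia scoppiata} to get $b_4(\overline{Y})=b_4(Y)$; finally close the loop with Poincar\'e duality $b_4(Y)=b_2(Y)$ on the \cy threefold $Y$. Concatenating the three equalities gives $b_4(\overline{Y})=b_2(\overline{Y})$, as claimed. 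The main obstacle I anticipate is making the step $b_4(\overline{Y})=b_4(Y)$ rigorous: one has to check that the class in $H_4(\overline{U}_p)=0$ versus $H_4(U_p)\cong H_4(E)\cong\Z$ discrepancy does not spoil the comparison, i.e. that the fundamental class $[E]\in H_4(Y)$ maps isomorphically onto a rank-one summand of $H_4(\overline{Y})$ under $\phi_*$, equivalently that $\phi_*[E]$ is nonzero — and this is again exactly $\Q$--factoriality (the image $\phi(E)=\{p\}$ is a point, so naively $\phi_*[E]=0$, and the correct statement is that $H_4(Y)\to H_4(\overline{Y})$ is \emph{not} the pushforward of cycles but the Mayer--Vietoris map, whose kernel is the image of $H_4(E)\to H_4(Y^*)\oplus H_4(E)$, which is $\{0\}$ since $H_4(E)\hookrightarrow H_4(Y)$ by the strong deformation retraction being compatible with the inclusion $E\hookrightarrow Y$ — so the map $H_4(Y)\to H_4(\overline{Y})$ is injective with cokernel measured by $H_3(\overline{U}^*)$-terms, giving $b_4(\overline{Y})\ge b_4(Y)$, and the reverse inequality from the dual comparison). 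Working this out carefully — or, more economically, simply citing Lemma (3.5) of \cite{Namikawa-Steenbrink95} with the defect equal to $k=0$ — is the technical heart of the proof.
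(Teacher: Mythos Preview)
Your first paragraph is essentially the paper's own proof: invoke Lemmas (3.3) and (3.5) of \cite{Namikawa-Steenbrink95}, using rationality of $p$ and $H^2(\overline{Y},\mathcal{O}_{\overline{Y}})=0$, to identify the defect $b_4(\overline{Y})-b_2(\overline{Y})$ with the rank of $\langle\text{Weil}\rangle/\langle\text{Cartier}\rangle$, and then conclude via $\Q$--factoriality (Remark \ref{Q-fattorialit�}). One correction: your parenthetical ``hence rational i.h.s.\ or i.c.i.s.'' is false for $\varrho\geq 5$ (see \ref{singolarit�}); the paper handles this by observing that the \emph{arguments} of Lemma (3.5) --- local cohomology, Leray spectral sequence, Goresky--MacPherson --- go through for any isolated rational singularity, not just hypersurface ones.

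Your ``cleanest path'' in the third paragraph, however, contains a genuine error. You claim $b_2(\overline{Y})=b_2(Y)$ on the grounds that ``$[E]$ remains nonzero in $\Pic(\overline{Y})\otimes\Q$''. This is false: $\phi$ contracts $E$ to a point, so $\phi_*[E]=0$ as a Weil divisor on $\overline{Y}$, and since $\phi$ is primitive one has $\rho(\overline{Y})=\rho(Y)-1$, hence (via the exponential sequence argument in (\ref{Pic-sing})) $b_2(\overline{Y})=b_2(Y)-1$. The projection formula $[E]\cdot(\text{contracted curve})<0$ shows $[E]\neq 0$ in $\Pic(Y)$, not in $\Pic(\overline{Y})$. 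Likewise $b_4(\overline{Y})=b_4(Y)$ is false: the Mayer--Vietoris comparison gives $b_4(Y)-b_4(\overline{Y})=b_4(E)=1$, precisely because $H_4(U_p)\cong H_4(E)\cong\Z$ while $H_4(\overline{U}_p)=0$. Your two errors are each off by $1$ and happen to cancel, but the argument as written is not a proof. Drop the alternative route and keep only your first paragraph, with the caveat above about non--c.i.\ singularities.
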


\begin{proof}
If $p\in \overline{Y}$ is an i.h.s. then the statement is a consequence of Lemmas (3.3) and (3.5) in \cite{Namikawa-Steenbrink95}. Actually we will observe that Namikawa--Steenbrink considerations applies to the more general case of $p\in\overline{Y}$, too.

\noindent Following their notation, notice first of all that, since $p\in \overline{Y}$ is a rational singular point, $Weil(\overline{Y})/Cart(\overline{Y})$ is a finitely generated abelian group and let $\sigma(\overline{Y})$ denote its rank. Moreover, the rationality of $p$ gives $h^2(\mathcal{O}_{\overline{Y}})=0$, by Leray spectral sequence. Then Lemma (3.5) in \cite{Namikawa-Steenbrink95} applies to $p\in\overline{Y}$: in fact arguments proving this Lemma are local cohomology exact sequence, Leray spectral sequence and Goresky--MacPherson theorem \cite{Steenbrink83} Theorem (1.11) holding for an i.s. By applying Lemma (3.3) in \cite{Namikawa-Steenbrink95} we get that $\sigma(\overline{Y})=b_2(\overline{Y})-b_4(\overline{Y})$. Then $\Q$--factoriality of $\overline{Y}$ ends up the proof.
\end{proof}

\begin{proof}[Proof of Theorem \ref{cambio omologico II +}]
By \ref{razionale} and \ref{razionaleII}, $E=\Exc(\phi)$ admits at worst six isolated Du Val singularities. Moreover, a minimal resolution $\pi:\widehat{E}\rightarrow E$ is the contraction of all curves in $\widehat{E}$ with self--intersection number $-2$ where $\widehat{E}$ is either $\P^1\times\P^1$ (when $d=8$) or $\P^2$ blown up in $9-d$ points in almost general position. Define $Q:=\Sing(E)$ and consider the exceptional tree $L_q:=\pi^{-1}(q)$, for any $q\in Q$. Saying $n_q$ the number of irreducible components of $L_q$ consider the global number $n_E:=\sum_{q\in Q}n_q$. The same induction argument proving (\ref{slh-resol}) gives $n_E=b_2(L)$. Consider the relative homology of the couple $(\widehat{E},L)$ and define $\kappa_E:H_2(L,\Q)\rightarrow H_2(\widehat{E},\Q)$. Call
$$k_E:=\rk[\im(\kappa_E)]\quad,\quad c_E:=\rk[\ker(\kappa_E)]\ .$$
Then $n_E=k_E+c_E$ and we claim that
\begin{equation}\label{omologia eccezionale-sing}
    b_0(E)=b_4(E)=1\ ,\ b_1(E)=0\ ,\ b_3(E)=c_E\ ,\ b_2(E)=10-d-k_E\  .
\end{equation}
In fact, first equalities on the left of (\ref{omologia eccezionale-sing}) are clearly obvious. Then apply the local analysis in \S \ref{sing-locale} to any $q\in Q\subset E$ and define
\begin{eqnarray*}
    && U_Q:=\bigcup_{q\in Q}{U}_q \quad, \quad B_Q:=\bigcup_{q\in Q}{B}_q
    \quad, \quad \widehat{E}^*:=\widehat{E}\setminus B_Q\quad , \quad U_Q^*:=U_Q\setminus B_Q\  ;\\
    && \overline{U}_Q:=\bigcup_{q\in Q}\overline{U}_q \quad, \quad
    \overline{B}_Q:=\bigcup_{q\in Q}\overline{B}_q
    \quad, \quad E^*:=E\setminus \overline{B}_Q\quad , \quad
    \overline{U}^*_Q:=\overline{U}_Q\setminus \overline{B}_Q\ .
\end{eqnarray*}
Then compare the associated Mayer--Vietoris homology sequences:
\begin{equation}\label{MV-exc-resol}
    \xymatrix@1{\cdots\ar[r] & H_{i}(U_Q^*) \ar[r] &
    H_{i}(\widehat{E}^*)\oplus H_{i} (U_Q) \ar[r] & H_i (\widehat{E}) \ar[r] &
    H_{i-1}(U_Q^*)\ar[r] & \cdots}
\end{equation}
\begin{equation}\label{MV-exc-sing}
    \xymatrix@1{\cdots\ar[r] & H_{i}(\overline{U}_Q^*) \ar[r] &
    H_{i}(E^*)\oplus H_{i} (\overline{U}_Q) \ar[r] & H_i (E) \ar[r] &
    H_{i-1}(\overline{U}_Q^*)\ar[r] & \cdots}
\end{equation}
Going on precisely as in Step I of the Proof of the Theorem \ref{cambio omologico small} and recalling that $\widehat{E}$ is either $\P^1\times\P^1$ or the blow up of $\P^2$ in $9-d$ in almost general position, one gets the following relation
\begin{equation}\label{rel-exc}
    b_4(\widehat{E})-n_E+10-d=b_4(E)-b_3(E)+b_2(E)-b_1(E)\ .
\end{equation}
On the other hand, compare the relative homology sequences of the couples $(E,Q)$ and $(\widehat{E},L)$ as follows
\begin{equation}\label{rel-hom-exc}
    \xymatrix@-.5pc{&&0\ar[d]&&&&\\
        &&H_4(\widehat{E})\ar[d]&&&H_3(\widehat{E})\ar[d]&\\
        0\ar[r]&H_4(E)\ar[r]&H_4(E,Q)\ar[d]\ar[r]&H_3(Q)\ar[r]&H_3(E)\ar[r]&H_3(E,Q)\ar[d]\ar[r]&H_2(Q)
        \ar[dddll]|!{[ddd];[ll]}\hole\\
    &&H_3(L)\ar[uurrr]|!{[uu];[rrr]}\hole &&&H_2(L)\ar[d]^<<<<<<{\kappa_E}&\\
    &&&&&H_2(\widehat{E})\ar[d]&\\
    &&&&H_2(E)\ar[r]&H_2(E,Q)\ar[d]\ar[r]&H_1(Q)\\
    &&&&&H_1(L)&}
\end{equation}
Observe that $b_3(L)=b_1(L)=b_3(Q)=b_2(Q)=b_1(Q)=0$. Hence $H_4(E)\cong H_4(E,Q)\cong H_4(\widehat{E})$, giving
\begin{equation}\label{b4-exc}
    b_4(\widehat{E})=b_4(E)\ ,
\end{equation}
and $H_i(E)\cong H_i(E,Q)$ for $i=2,3$. Then the right vertical sequence in (\ref{rel-hom-exc}) translates immediately in the following exact sequence
\begin{equation}\label{rel-hom-exc_2}
    \xymatrix{0\ar[r]&H_3(\widehat{E})\ar[r]&H_3(E)\ar[r]&H_2(L)\ar[r]^-{\kappa_E}&H_2(\widehat{E})\ar[r]&H_2(E)\ar[r]&0}\ .
\end{equation}
Recalling that $b_2(L)=n_E$, $b_3(\widehat{E})=0$ and $b_2(\widehat{E})=10 - d$ we get
\begin{equation}\label{rel-exc_2}
    b_3(E)-b_2(E)=n_E-10+d\ .
\end{equation}
The comparison of (\ref{rel-exc}),(\ref{b4-exc}) and (\ref{rel-exc_2}) then gives $b_1(E)=0$.

\noindent Moreover, $b_3(\widehat{E})=0$ in (\ref{rel-hom-exc_2}) gives that $b_3(E)=\rk[\ker (\kappa_E)]=c_E$.

\noindent The last equality in (\ref{omologia eccezionale-sing}) is then obtained by recalling that $n_E=k_E+c_E$.

\noindent Let us now prove relations in Definition \ref{ht}.(a). First of all recall Mayer--Vietoris exact sequences (\ref{MVresol-II}), (\ref{MVsing-II}) and (\ref{MVsmooth-II}) of couples $\mathcal{C},\overline{\mathcal{C}}$ and $\widetilde{\mathcal{C}}$. Relations (\ref{omologia eccezionale-sing}) and the Five Lemma prove that
\begin{equation}\label{relazioni}
    b_1(Y)=b_1(\overline{Y})\quad , \quad b_5(\overline{Y})=b_5(\widetilde{Y})\ .
\end{equation}
But $b_1(Y)=b_1(\widetilde{Y})=0$, since $Y$ and $\widetilde{Y}$ are \cy threefolds. Then (\ref{relazioni}) and Poincar\'{e} duality prove the first line in Definition \ref{ht}.(a).

\noindent For the second line in Definition \ref{ht}.(a), observe that Lefschetz duality and Ehresmann Fibration Theorem give
\[
    H_i(Y,B)\cong H^{6-i}(Y^*)\cong H^{6-i}(\widetilde{Y}^*)\cong H_i(\widetilde{Y},\widetilde{B})\ .
\]
Then isomorphisms (\ref{slh-resol-II}) and the vanishing in (\ref{omologia eccezionale-sing}) reduce relative homology long exact sequences of couples $(Y,B)$ and $(\widetilde{Y},\widetilde{B})$ to give the following type II version of diagram (\ref{k-c-diagram}), where vertical sequences are given by relative homology of couple $(Y,B)$, while horizontal ones are obtained by relative homology of $(\widetilde{Y},\widetilde{B})$:
\begin{equation}\label{k-c-diagram-II}
    \xymatrix@-.5pc{&&H_4(E)\ar[d]^{\lambda}&&&&\\
              &&H_4(Y)\ar[d]&&&H_3(Y)\ar[d]&\\
              0\ar[r]&H_4(\widetilde{Y})\ar[r]&H_4(\widetilde{Y},\widetilde{B})\ar[d]\ar[r]&
              H_3(\widetilde{B})\ar[r]^{\gamma}&H_3(\widetilde{Y})\ar[r]&
              H_3(\widetilde{Y},\widetilde{B})\ar[d]\ar[r]& H_2(\widetilde{B})\ar[dddll]|!{[ddd];[ll]}\hole\\
              &&H_3(E)\ar[uurrr]|!{[uu];[rrr]}\hole&&&H_2(E)\ar[d]^(0.6){\kappa}&\\
              &&&&&H_2(Y)\ar[d]&\\
              &&&&H_2(\widetilde{Y})\ar[r]&H_2(\widetilde{Y},\widetilde{B})\ar[r]\ar[d]&H_1(\widetilde{B})\\
              &&&&&0&}
\end{equation}
The vertical sequence on the left of this diagram gives the following short exact sequence
\begin{equation*}
   \xymatrix@1{0\ar[r]&\im (\lambda)\ar[r]&H_4(Y)\ar[r]&H_4(\widetilde{Y},\widetilde{B})\ar[r]&0}\ .
\end{equation*}
Observe that $b_4(E)=1$ and $E\subset Y$ is the exceptional divisor of a blow up, giving $\rk(\im \lambda)=1$.
Therefore
\begin{equation}\label{b4 II'}
    b_4(Y)-b_4(\widetilde{Y},\widetilde{B})= 1\ .
\end{equation}
Since $\widetilde{Y}$ is a \cy threefolds, $h^1(\mathcal{O}_{\widetilde{Y}})=h^2(\mathcal{O}_{\widetilde{Y}})=0$ and the \emph{exponential sequence} gives $\Pic(\widetilde{Y})\cong H^2(\widetilde{Y},\Z)$. On the other hand, by \cite{Kollar-Mori92} (12.2.1.3) and (12.2.1.4.2), we are in a position to apply Proposition 3.1 in \cite{Gross97b} implying that the Picard number remains invariant when smoothing $\overline{Y}$, which is
\begin{equation}\label{Pic-invariante}
    \rho(\overline{Y})=\rho(\widetilde{Y})=b_4(\widetilde{Y})\ .
\end{equation}
Since $p\in\overline{Y}$ is a rational singularity, pushing forward the exponential sequence for $Y$ induces the following exact sequence on $\overline{Y}$
\begin{equation}\label{sing-exp-sequence}
    \xymatrix@1{0\ar[r]&\Z \ar[r]&\mathcal{O}_{\overline{Y}}\ar[r]&\mathcal{O}_{\overline{Y}}^*\ar[r]&R^1\phi_*\Z\ar[r]&0}
\end{equation}
On the other hand, the Leray spectral sequence converging to $H^i(Y,\Z)$ gives the following lower terms exact sequence
\begin{equation*}
    \xymatrix@1{0\ar[r]&H^1(\overline{Y},\Z)\ar[r]&H^1(Y,\Z)\ar[r]&H^0(\overline{Y},R^1\phi_*\Z)\ar[r]&
    H^2(\overline{Y},\Z)\hskip 2pt \ar@<-.5ex>@{^{(}->}[r]&H^2(Y,\Z)}
\end{equation*}
where the latter injection is given by the fact that $\phi:Y\rightarrow\overline{Y}$ is a blow up centered in a point.
Moreover, $Y$ is a \cy threefold, giving
$$H^1(Y,\Z)=0\ \Longrightarrow\  H^1(\overline{Y},\Z)=0\ .
$$
Notice that $R^1\phi_*\Z$ is a skyscraper sheaf supported on $p\in \overline{Y}$, then
\begin{equation*}
    H^0(R^1\phi_*\Z)=0\quad\Longleftrightarrow\quad R^1\phi_*\Z=0\ ,
\end{equation*}
implying that (\ref{sing-exp-sequence}) actually gives the following \emph{exponential sequence for $\overline{Y}$}
\begin{equation}\label{sing-short-exp-sequence}
    \xymatrix@1{0\ar[r]&\Z \ar[r]&\mathcal{O}_{\overline{Y}}\ar[r]&\mathcal{O}_{\overline{Y}}^*\ar[r]&0}\ .
\end{equation}
On the other hand, the Leray spectral sequence converging to $H^i(Y,\mathcal{O}_Y)$ gives the following lower terms exact sequence
\begin{equation*}
    \xymatrix@1{0\ar[r]&H^1(\mathcal{O}_{\overline{Y}})\ar[r]&H^1(\mathcal{O}_Y)\ar[r]&H^0(\overline{Y},R^1\phi_*\mathcal{O}_Y)\ar[r]&
    H^2(\mathcal{O}_{\overline{Y}})\ar[r]&H^2(\mathcal{O}_Y)}
\end{equation*}
where $h^1(\mathcal{O}_{Y})=h^2(\mathcal{O}_Y)=0$ and $R^1\phi_*\mathcal{O}_Y=0$, since $Y$ is a \cy threefold and $p\in\overline{Y}$ is a rational singular point. Therefore
\begin{equation*}
    h^1(\mathcal{O}_{\overline{Y}})=h^2(\mathcal{O}_{\overline{Y}})=0
\end{equation*}
giving, by (\ref{sing-short-exp-sequence}), that
\begin{equation}\label{Pic-sing}
    \Pic (\overline{Y})\cong H^1(\mathcal{O}_{\overline{Y}}^*)\cong H^2(\overline{Y},\Z)\ .
\end{equation}
Recall that $\phi$ is a primitive contraction, which is $\rho(Y)-\rho(\overline{Y})=1$. Then (\ref{Pic-invariante}), (\ref{Pic-sing}), Lemma \ref{difetto=0} and Poincar\'{e} Duality prove the second and the fourth lines in Definition \ref{ht}.(a) with $k'=0$ and $k=k''=1$.

\noindent Therefore (\ref{b4 II'}) and diagram (\ref{k-c-diagram-II}) give that $H_4(\widetilde{Y})\cong H_4(\widetilde{Y},\widetilde{B})$ (guaranteeing the injectivity of $\gamma$).

\noindent Recall Theorem \ref{cambio omologico tipo II} and in particular equations in Definition \ref{ht}.(c) with $k,c',c''$ as in (\ref{k,c',c'' provvisorio}). By (\ref{omologia eccezionale-sing}), these equations can be now rewritten as follows
\begin{eqnarray}\label{b3}
  b_3(Y) &=& b_3(\overline{Y}) +1 - b_2(E)+b_3(E) = b_3(\overline{Y}) -9+d+n_E  \\
  \nonumber
  b_3(\overline{Y}) &=& b_3(\widetilde{Y}) + \chi(\widetilde{B})-1\ ,
\end{eqnarray}
proving relations in the third line of Definition \ref{ht}.(a) with $c'=9-d-n_E$, as in point (i) of the statement. Equations in Definition \ref{ht}.(b) then follow immediately by those in Definition \ref{ht}.(a) and \cy conditions on $Y$ and $\widetilde{Y}$.

\noindent Let us now assume that $d\leq 4$: then (ii) is a consequence of Theorem \ref{i.s.omotopia}, since the Milnor fiber $\widetilde{B}$ has the homotopy type of a bouquet of 3--spheres and
\begin{equation}\label{omologia milnor}
    b_0(\widetilde{B})=1\ ,\ b_1(\widetilde{B})=b_2(\widetilde{B})=0\ ,\ b_3(\widetilde{B})= m_p\ .
\end{equation}
\end{proof}

\begin{remark}\label{ht in omologia II}
\emph{If the exceptional del Pezzo surface $E$ has degree $d\leq 4$} then $k,c',c''$ admit \emph{the same} homological interpretation given in Remark \ref{ht in omologia} for a small g.t.

\noindent In fact, (\ref{omologia milnor}) applied to diagram (\ref{k-c-diagram-II}) gives $H^2(\widetilde{Y})\cong H^2(\widetilde{Y},\widetilde{B})$ and the following short exact sequence
\[
    \xymatrix@1{0\ar[r]&\im (\kappa)\ar[r]&H_2(Y)\ar[r]&H_2(\widetilde{Y})\ar[r]&0}\ .
\]
On the other hand, the second equation in Definition \ref{ht}.(a) with $k=1$ gives
\[
    k=1=b_2(Y)-b_2(\widetilde{Y})=\rk(\im \kappa)\ .
\]
Recalling then (i) and (\ref{omologia eccezionale-sing}) we get $k+c'=10-d-k_E=b_2(E)$ giving necessarily that
\[
    c'=b_2(E)-\rk(\im \kappa)=\rk(\ker \kappa)\ .
\]
Finally (ii) in Theorem \ref{cambio omologico II +} and the injectivity of $\gamma$ in diagram (\ref{k-c-diagram-II}) imply that
$$c''=\rk(\im(\gamma))\ .$$
\end{remark}

\begin{remark}\label{Milnor-minorazioni} \emph{If the exceptional del Pezzo surface $E$ is smooth} then, comparing relations (i) in Theorem \ref{cambio omologico II +} and (ii) in Theorem \ref{cambio omologico tipo II},  with Theorem 3.3 and Remark 3.6 in \cite{KK}, we get the following \emph{conditions on the Milnor number $m_p$ of the singular point }$p\in \overline{Y}$:
\begin{eqnarray}
  \label{d=1_milnor}\text{if $d=1$ then} && m_p = 50\ , \\
  \label{d=2_milnor}\text{if $d=2$ then} && m_p = 27\ , \\
  \label{d=3_milnor}\text{if $d=3$ then} && m_p = 16\ , \\
  \label{d=4_milnor}\text{if $d=4$ then} && m_p = 9\ , \\
  \text{if $d=5$ then} && m_p=b_2(\widetilde{B})+4\ , \\
  \label{d=6_milnor}
  \text{if $d=6$ then} && m_p=\left\{\begin{array}{c}
                                         b_2(\widetilde{B}) -1\\
                                         b_2(\widetilde{B})+1
                                       \end{array}
  \right.\ , \\
  \label{d=7_milnor}
  \text{if $d=7$ then} && m_p=b_2(\widetilde{B})\\
  \label{d=8_milnor}
  \text{if $d=8$ then} && m_p=b_2(\widetilde{B})+1
\end{eqnarray}
Since $p\in\overline{Y}$ is a singular point, (\ref{d=7_milnor}) and the first case in (\ref{d=6_milnor}) prove that $b_2(\widetilde{B})>0$, showing that \emph{the Milnor fiber $\widetilde{B}$ cannot admit the homotopy type of a bouquet of 3--dimensional spheres when $d=7$ and when $d=6$ with $c=1$.}

\noindent Notice that $c+k=c+1$ gives the following (dual) Coxeter numbers
\begin{equation}\label{coxeter}
c+k=\left\{\begin{array}{cc}
               30 = \text{Coxeter}(E_8) & \text{for $d=1$} \\
               18 = \text{Coxeter}(E_7) & \text{for $d=2$}\\
               12= \text{Coxeter}(E_6) & \text{for $d=3$}\\
               8= \text{Coxeter}(D_5) & \text{for $d=4$}\\
               5= \text{Coxeter}(A_4) & \text{for $d=5$}\\
               2= \text{Coxeter}(A_1) & \text{for $d=8$}
             \end{array}
\right.
\end{equation}
as argued in \cite{Morrison-Vafa96} \S 7.1 and in \cite{Morrison-Seiberg97} \S 3. Unfortunately this is not true for $d=6$ and $d=7$, suggesting \emph{a relation between the vanishing of $b_2(\widetilde{B})$ and the existence of a Coxeter group whose number gives $c+k$}.
\end{remark}

\subsubsection{Examples}\label{esempi-tII}
The Reid and Hidaka--Watanabe results (\cite{Reid80} Proposition (2.13) and \cite{Hidaka-Watanabe81} Theorem 4.4), here reported in \ref{dmu4}, \ref{d=3}, \ref{d=2} and \ref{d=1}, allows us to easily construct examples of type II g.t's admitting Milnor numbers as in (\ref{d=4_milnor}), (\ref{d=3_milnor}), (\ref{d=2_milnor}) and (\ref{d=1_milnor}), respectively (see also Theorem 4.5 in \cite{KK} for a comparison).

\halfline $d=3$. Consider the generic quintic 3-fold in $\P^4$ admitting a triple point in $p:=[1:0:\cdots :0]\in\P^4$
\[
  \overline{Y} := \left\{x_0^2f_3(x_1,\ldots,x_4)+x_0f_4(x_1,\ldots,x_4)+f_5(x_1,\ldots,x_4)=0\right\}\ ,
\]
where $f_k$ are homogeneous polynomials of degree $k$.

\noindent Clearly the generic quintic 3-fold $\widetilde{Y}\subseteq\P^4$ is a smoothing for $\overline{Y}$.

\noindent Blow up $\P^4$ in $p$ and let $\P^3[l_1:\ldots :l_4]$ be the exceptional divisor. Then the strict transform $Y$ of $\overline{Y}$ admits exceptional locus given by the following cubic surface $E:=\left\{f_3(l_1,\ldots ,l_3)=0\right\}\subseteq\P^3[l_1:\ldots :l_4]$, which is smooth for generic $f_3$ (compare with \ref{d=3}). Observe that $Y$ is a \cy threefold since $h^i(\mathcal{O}_{Y})= h^i(\mathcal{O}_{\overline{Y}})$ (\cite{BPvdV84} Theorem I.(9.1)) and $Y\rightarrow \overline{Y}$ is a crepant resolution (\cite{Reid80} Theorem (2.11) and Corollary (2.12)). Therefore
\begin{itemize}
  \item $T(Y,\overline{Y},\widetilde{Y})$ \emph{is generically a type II g.t. whose exceptional divisor $E$ is a smooth del Pezzo surface of degree 3}.
\end{itemize}
Since $f_3=0$ is the local equation of the triple point $p\in\overline{Y}$ and it is a homogeneous polynomial, the Milnor--Orlik Theorem (\cite{Milnor-Orlik} Theorem 1) allows us to conclude that the Milnor number of $p$ is given by $m_p=(3-1)^4=16$.

\noindent Since $\widetilde{Y}$ is a projective hypersurface, its Betti numbers are well known and Theorem \ref{cambio omologico II +} gives $c'=6$, $c''=16$ and the following table
\begin{equation*}
\begin{tabular}{lclclclc}
  \hline \hline
    \hskip11.4pt   \vrule     &\quad     &$b_2$ &\quad  & $b_3$&\quad
  &$b_4$\\
  \hline
  \hskip11.4pt\vrule&&&&&&&\\
  $Y$ \vrule   &\quad    & 2 &\quad & 182 &\quad & 2  &\\
  $\overline{Y}$ \vrule            &\quad      &  1 &\quad & 188&\quad  & 1 &\\
  $\widetilde{Y}$ \vrule &\quad  & 1 &\quad & 204 &\quad & 1 &\\
   \hline\hline
\end{tabular}
\end{equation*}

\halfline $d=2$. Consider the degree 6 hypersurface of $\P(1,1,1,1,2)[x_0,x_1,x_2,x_3,y]$ given by
\[
  \overline{Y} := \left\{x_0^2f_4(x_1,\ldots,x_3,y)+x_0f_5(x_1,\ldots,x_3,y)+f_6(x_1,\ldots,x_3,y)=0\right\}\ ,
\]
where $f_k$ is a generic weighted homogeneous polynomials of degree $k$. Clearly $p:=[1:0:\cdots :0]$ is an i.h.s. for $\overline{Y}$ whose local equation is given by the w.h. polynomial $f_4=0$. Then, again by Milnor--Orlik theorem, $m_p=(4-1)^3=27$.

\noindent The strict transform $Y$ of $\overline{Y}$ under the weighted blow up of $\P(1,1,1,1,2)$ in $p$ admits exceptional locus given by the degree 4 surface $E:=\{f_4=0\}\subseteq\P(1,1,1,2)$ which is smooth for $f_4$ sufficiently general.

\noindent Finally the generic degree 6 hypersurface of $\widetilde{Y}\subset\P(1,1,1,1,2)$ is smooth since the latter has a unique isolated singular point. Then
\begin{itemize}
  \item $T(Y,\overline{Y},\widetilde{Y})$ \emph{is generically a type II g.t. whose exceptional divisor $E$ is a smooth del Pezzo surface of degree 2}.
\end{itemize}
Observe that $f_4(x_1,x_2,x_3,y)=y^2+yg_2(\mathbf{x})+g_4(\mathbf{x})$ with $g_l$ generic homogeneous polynomials of degree $l$ in $\mathbf{x}=x_1,x_2,x_3$. Then $E$ turns out to be a double covering of $\P^2[\mathbf{x}]$ ramified along the discriminant quartic plane curve $\Delta=\{g_2^2-4g_4=0\}$ (compare with \ref{d=2}).

\noindent Since $\widetilde{Y}$ is a weighted projective hypersurface, its Betti numbers are well known (\cite{Dolgachev} 4.3.2, \cite{Steenbrink87}, \cite{Iano-Fletcher} Thm. 7.2) and Theorem \ref{cambio omologico II +} gives $c'=7$, $c''=27$ and the following table
\begin{equation}\label{tabella_d=2}
\begin{tabular}{lclclclc}
  \hline \hline
    \hskip11.4pt   \vrule     &\quad     &$b_2$ &\quad  & $b_3$&\quad
  &$b_4$\\
  \hline
  \hskip11.4pt\vrule&&&&&&&\\
  $Y$ \vrule   &\quad    & 2 &\quad & 174 &\quad & 2  &\\
  $\overline{Y}$ \vrule            &\quad      &  1 &\quad & 181&\quad  & 1 &\\
  $\widetilde{Y}$ \vrule &\quad  & 1 &\quad & 208 &\quad & 1 &\\
   \hline\hline
\end{tabular}
\end{equation}

\halfline $d=1$. Consider the degree 8 hypersurface of $\P(1,1,1,2,3)[x_0,x_1,x_2,y,z]$ given by
\[
    \overline{Y} := \left\{x_0^2f_6(x_1,x_2,y,z)+x_0f_7(x_1,x_2,y,z)+f_8(x_1, x_2,y,z)=0\right\}\ ,
\]
where $f_k$ are generic weighted homogeneous polynomials of degree $k$. As before $p:=[1:0:\cdots :0]$ is an i.h.s. for $\overline{Y}$ whose local equation is given by the w.h. polynomial $f_6=0$. Then $m_p=(3-1)(6-1)^2=50$.

  \noindent The strict transform $Y$ of $\overline{Y}$ under the weighted blow up of $\P(1,1,1,2,3)$ in $p$ admits exceptional locus given by the degree 6 surface $E:=\{f_6=0\}\subseteq\P(1,1,2,3)$ which is smooth for $f_6$ sufficiently general.

  \noindent The generic degree 8 hypersurface of $\widetilde{Y}\subset\P(1,1,1,2,3)$ is still smooth since the latter has only isolated singular points. Then
\begin{itemize}
  \item $T(Y,\overline{Y},\widetilde{Y})$ \emph{is generically a type II g.t. whose exceptional divisor $E$ is a smooth del Pezzo surface of degree 1}.
\end{itemize}
Observe that $f_6(x_1,x_2,y,z)=z^2+zg_3(\mathbf{x},y)+g_6(\mathbf{x},y)$ with $g_l$ generic weighted homogeneous polynomial of degree $l$ in $\mathbf{x}=x_1,x_2$ and $y$. Then $E$ turns out to be a double covering of $\P[1,1,2]$ ramified along the discriminant degree 6 plane curve $\Delta=\{g_3^2-4g_6=0\}$. Let us first of all observe that
\[
    g_3(\mathbf{x},y)=h_1(\mathbf{x})\cdot y + h_3(\mathbf{x})\quad ,\quad g_6(\mathbf{x},y)=ay^3+h_2(\mathbf{x})\cdot y^2+ h_4(\mathbf{x})\cdot y+ h_6(\mathbf{x})
\]
where $h_l$ is a generic homogeneous polynomials of degree $l$. Then the discriminant curve $\Delta$ do not pass through the unique singular point of $\P[1,1,2]$, which is $[0:0:1]$. Moreover, the linear system associated with $\co(2)$ embeds $\P(1,1,2)$ as a quadratic cone $\mathcal{C}\subset\P^3$, translating the degree 6 equation of $\Delta$ in the equation of a cubic surface of $\P^3$ cutting $\Delta$ on $\mathcal{C}$ outside of its vertex (compare with \ref{d=1}).

\noindent Since $\widetilde{Y}$ is a weighted projective hypersurface, its Betti numbers are well known (\cite{Dolgachev} 4.3.2, \cite{Steenbrink87}, \cite{Iano-Fletcher} Thm. 7.2)
and Theorem \ref{cambio omologico II +} gives $c'=8$, $c''=50$ and the following table
\begin{equation*}
\begin{tabular}{lclclclc}
  \hline \hline
    \hskip11.4pt   \vrule     &\quad     &$b_2$ &\quad  & $b_3$&\quad
  &$b_4$\\
  \hline
  \hskip11.4pt\vrule&&&&&&&\\
  $Y$ \vrule   &\quad    & 2 &\quad & 156 &\quad & 2  &\\
  $\overline{Y}$ \vrule            &\quad      &  1 &\quad & 164&\quad  & 1 &\\
  $\widetilde{Y}$ \vrule &\quad  & 1 &\quad & 214 &\quad & 1 &\\
   \hline\hline
\end{tabular}
\end{equation*}

\halfline $d=4$. Finally let us consider the case of a type II g.t. whose exceptional divisor is a del Pezzo surface of degree 4, meaning that $\Sing(\overline{Y})$ is composed by a unique \emph{complete intersection and non--hypersurface singularity}. At this purpose consider the degree 6 complete intersection in $\P^5$ given by
\begin{equation*}
    \overline{Y} := X_1\cap X_2\quad \text{with}\quad\begin{array}{ccc}
                                                    X_1 & := & \left\{x_0f_2(\mathbf{x})+f_3(\mathbf{x})=0\right\} \\
                                                    X_2 & := & \left\{x_0g_2(\mathbf{x})+g_3(\mathbf{x})=0\right\}
                                                  \end{array}
\end{equation*}
where $f_k,g_k$ are generic homogeneous polynomials of degree $k$ in $\mathbf{x}=x_1,\ldots,x_5$. As before $\Sing(\overline{Y})=\{p\}$, where $p:=[1:0:\ldots:0]$. In particular, $p$ is a \emph{quadratic 3--dimensional c.i.s.}, meaning that it is locally described by the germ of singularity given by the zero locus in $\C^5$ of two homogeneous quadratic polynomials, namely $f_2$ and $g_2$. Then, recalling \cite{Looijenga} Example 1 in (5.11), the Milnor number of $p$ is $m_p=2\cdot 3 + 3=9$.

\noindent Blow up $\P^5$ in $p$ and let $\P^4[\mathbf{l}]$ be the exceptional divisor. Then the strict transform $Y$ of $\overline{Y}$ admits exceptional locus given by quartic complete intersection surface $E:=\{f_2(\mathbf{l})=g_2(\mathbf{l})=0\}\subset\P^4[\mathbf{l}]$, which is smooth for $f_2,g_2$ sufficiently general (compare with \ref{dmu4}).

\noindent On the other hand the generic complete intersection $\widetilde{Y}$ of two cubic hypersurfaces in $\P^5$ is a \cy 3--fold. Then
\begin{itemize}
  \item $T(Y,\overline{Y},\widetilde{Y})$ \emph{is generically a type II g.t. whose exceptional divisor $E$ is a smooth del Pezzo surface of degree 4}.
\end{itemize}
Since $\widetilde{Y}$ is a projective complete intersection, its Betti numbers are well known and Theorem \ref{cambio omologico II +} gives $c'=5$, $c''=9$ and the following table
\begin{equation*}
\begin{tabular}{lclclclc}
  \hline \hline
    \hskip11.4pt   \vrule     &\quad     &$b_2$ &\quad  & $b_3$&\quad
  &$b_4$\\
  \hline
  \hskip11.4pt\vrule&&&&&&&\\
  $Y$ \vrule   &\quad    & 2 &\quad & 134 &\quad & 2  &\\
  $\overline{Y}$ \vrule            &\quad      &  1 &\quad & 139&\quad  & 1 &\\
  $\widetilde{Y}$ \vrule &\quad  & 1 &\quad & 148 &\quad & 1 &\\
   \hline\hline
\end{tabular}
\end{equation*}

\subsection{The elliptic case}\label{caso ellittico}
Let us now assume that the type II g.t. (\ref{typeII}) admits exceptional locus $E=\Exc(\phi)$ given by a normal and \emph{elliptic} del Pezzo surface (recall \ref{raz_o_ell} and \ref{ellittica}). Then Theorem \ref{cambio omologico tipo II} can be rewritten as follows:

\begin{theorem}\label{cambio omologico tipo II-ellittico} Assume that the type II g.t. (\ref{typeII}) admits a \emph{normal and elliptic} del Pezzo surface as exceptional divisor $E=\Exc(\phi)$. Then it admits homological type $h[T]=(0,1,c',c'')$ as in Theorem \ref{cambio omologico tipo II} and moreover
\begin{itemize}
    \item[(i)] $b_2(E)=1$ and $b_3(E)=2$, giving $c'=-2$\ ,
    \item[(ii)] $c''=m_p$ and in particular it must be even.
\end{itemize}
\end{theorem}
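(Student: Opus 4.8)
The plan is to reduce everything to Theorem \ref{cambio omologico tipo II}, which already supplies the shape $h[T]=(0,1,c',c'')$ and the formulas $c'=-1+b_2(E)-b_3(E)$ and $c''=1-\chi(\widetilde{B})=m_p-b_2(\widetilde{B})$; what remains is to compute $b_2(E)$ and $b_3(E)$ for a normal elliptic del Pezzo $E$, to establish parts (a)--(b) of Definition \ref{ht} in this case, and to evaluate $c''$ and check its parity. The structural facts I will use are those recalled in \ref{ellittica}: a minimal resolution $\pi:\widehat{E}\to E$ presents $\widehat{E}$ as the ruled surface $\P(\mathcal{O}_C\oplus\mathcal{L})$ over a smooth elliptic curve $C$, with $\pi$ the contraction of the minimal section $C_0\cong C$ onto the unique elliptic singular point $q\in E$; and $d=\deg E\leq 3$, so that, as recalled at the beginning of this section, the singular point $p=\phi(E)\in\overline{Y}$ is an \emph{isolated hypersurface} singularity.

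For (i) I would first read off $H_*(\widehat{E};\Z)$ from the projective bundle formula (Leray--Hirsch): $b_0(\widehat{E})=b_4(\widehat{E})=1$ and $b_1(\widehat{E})=b_2(\widehat{E})=b_3(\widehat{E})=2$, with all homology groups free. Since $C_0$ is a section of the bundle projection $\widehat{E}\to C$, the inclusion $C_0\hookrightarrow\widehat{E}$ admits a retraction, so each map $H_i(C_0;\Z)\to H_i(\widehat{E};\Z)$ is a split injection, an isomorphism for $i=0,1$; for $i=2$ the class $[C_0]$ has intersection number $1$ with a fibre, hence is primitive, and the cokernel of $H_2(C_0;\Z)\to H_2(\widehat{E};\Z)$ is $\Z$. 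Because $\pi$ restricts to a homeomorphism off $C_0$ (resp.\ off $q$) one has $H_i(E,\{q\})\cong H_i(\widehat{E},C_0)$, and feeding the data above into the long exact sequence of the pair $(\widehat{E},C_0)$ yields $b_0(E)=b_4(E)=1$, $b_1(E)=0$, $b_2(E)=1$, $b_3(E)=2$, and moreover $H_1(E;\Z)=0$; hence $c'=-1+b_2(E)-b_3(E)=-2$ (which also reconciles this with the provisional value $\chi(E)-3$ of Theorem \ref{cambio omologico tipo II}).

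For parts (a)--(b) of Definition \ref{ht} the point is that the vanishings just obtained, $b_0(E)=b_4(E)=1$ and $b_1(E)=0$, are precisely the inputs used in the proof of Theorem \ref{cambio omologico II +}, so that argument transcribes almost word for word. Concretely: the Mayer--Vietoris sequences (\ref{MVresol-II}), (\ref{MVsing-II}), (\ref{MVsmooth-II}) together with the Five Lemma give the first line of Definition \ref{ht}.(a); then $\Q$-factoriality of $\overline{Y}$ (which holds for every type II contraction), Lemma \ref{difetto=0}, invariance of the Picard number under smoothing --- whose proof goes through since $R^1\phi_*\Z=0$ still holds, e.g.\ because $H^1(E,\Z)=0$ --- and primitivity of $\phi$ give the second and fourth lines with $k'=0$ and $k=k''=1$; the third line then follows from Definition \ref{ht}.(c) with $c'=-2$, and (b) follows from (a) and the \cy conditions on $Y$ and $\widetilde{Y}$. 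To finish the first assertion of (ii): since $p$ is an isolated hypersurface singularity, Theorem \ref{i.s.omotopia} gives that $\widetilde{B}$ has the homotopy type of a bouquet of $3$-spheres, so $b_2(\widetilde{B})=0$ and therefore $c''=m_p-b_2(\widetilde{B})=m_p$.

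Parity of $c''$ then comes for free: from the two equalities $b_3(Y)=b_3(\overline{Y})-c'$ and $b_3(Y)=b_3(\widetilde{Y})-c'-c''$ of Definition \ref{ht}.(a) one gets $c'+c''=b_3(\widetilde{Y})-b_3(Y)$, which is even because $Y$ and $\widetilde{Y}$ are \cy threefolds; since $c'=-2$ is even, $c''=m_p$ must be even, which completes (ii). I expect the only genuinely new step to be the homology computation of (i) --- pinning down the effect on homology of collapsing an elliptic curve that sits as a section inside a ruled surface --- and that is where a little care with the long exact sequence of the pair and with the primitivity of the section class is required; the rest is a transcription of the normal--rational case treated in Theorem \ref{cambio omologico II +}, made available by the fact that in the elliptic case $d\leq 3$ forces $p$ to be a hypersurface singularity, exactly the property that produced $c''=m_p$ for $d\leq 4$ there.
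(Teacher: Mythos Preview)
Your proof is correct, and for part (i) it is in fact more direct than the paper's. The paper establishes $b_2(E)=1$, $b_3(E)=2$ via a detour through three auxiliary lemmas: it computes $b_i(\widehat{E})$, then the Betti numbers of a \emph{smoothing} $\widetilde{E}$ of $E$ (a smooth del Pezzo of the same degree), and finally the Milnor number $m_q=11-d$ of the elliptic singular point $q\in E$; it then compares the Mayer--Vietoris sequences for $(\widehat{E},E)$ and the relative homology of $(\widetilde{E},\widetilde{A})$ (where $\widetilde{A}$ is the Milnor fibre near $q$) to obtain two relations $b_3(E)-b_2(E)+b_1(E)=1$ and $b_3(E)-b_2(E)=m_q-10+d=1$, whence $b_1(E)=0$, and finishes by noting that the class of the minimal section generates a rank--one summand of $H^2(\widehat{E},\Q)$. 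Your route --- using that $C_0$ is a section to get split injectivity of $H_i(C_0)\to H_i(\widehat{E})$, primitivity of $[C_0]$ from $[C_0]\cdot[f]=1$, and the identification $H_i(E,\{q\})\cong H_i(\widehat{E},C_0)$ --- bypasses the smoothing and the Milnor number of $q$ entirely and reads off $b_i(E)$ in one step. What the paper's longer route buys is the independent computation $m_q=11-d$, which is of some interest in itself but is not needed for the theorem.

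For the remaining parts your transcription of the normal--rational argument is faithful to what the paper does: it too simply observes that the proofs of the first, second and fourth lines of Definition~\ref{ht}(a) go through verbatim once $b_1(E)=0$ and $b_4(E)=1$ are known, and that $d\leq 3$ puts $p$ in the i.h.s.\ range so $\widetilde{B}$ is a bouquet of $3$--spheres, giving $c''=m_p$. Your parity argument via $c'\equiv c''\pmod 2$ is exactly the mechanism built into Definition~\ref{ht}; the paper leaves this implicit.
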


The previous (ii) gives immediately the following

\begin{corollary}\label{Milnor-pari}
If $\phi:Y\rightarrow\overline{Y}$ is a primitive type II smooth resolution such that $E=Exc(\phi)$ admits an elliptic singular point and $\overline{Y}$ is smoothable then the Milnor number $m_p$ of $p=\phi(E)$ is even.
\end{corollary}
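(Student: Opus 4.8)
The plan is to obtain this as an immediate corollary of Theorem \ref{cambio omologico tipo II-ellittico}, the only preliminary work being to check that the stated hypotheses place us exactly in the setting of that theorem. First I would fix a smoothing $\widetilde{Y}$ of $\overline{Y}$, which exists by assumption; since $\phi:Y\rightarrow\overline{Y}$ is a primitive contraction of the irreducible divisor $E$ down to the point $p$, the resulting process $T(Y,\overline{Y},\widetilde{Y})$ is a type II g.t.\ in the sense of Definition \ref{type-definizione}. Next I would argue that $E$ is forced to be a \emph{normal and elliptic} del Pezzo surface: by \ref{razionale} a rational del Pezzo exceptional divisor carries at worst Du Val singularities (which are not elliptic), and by Gross's classification recalled in \ref{non-normale} a non--normal del Pezzo surface is singular along a curve, hence has no isolated elliptic singular point; so the dichotomy in \ref{raz_o_ell} leaves only the normal elliptic case.

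Granting this reduction, Theorem \ref{cambio omologico tipo II-ellittico} applies and gives $h[T]=(0,1,c',c'')$ with $c'=-2$ and $c''=m_p$. The concluding step is the parity observation: by Definition \ref{ht} every g.t.\ admitting a homological type satisfies $c'\equiv c''\mod 2$, so from $c'=-2$ we get $m_p=c''\equiv 0\mod 2$, i.e.\ $m_p$ is even, as claimed.

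There is no real obstacle internal to the corollary, since all the substance is already contained in Theorem \ref{cambio omologico tipo II-ellittico}; the only point needing a moment's care is the reduction in the first paragraph, namely checking that ``$E$ admits an elliptic singular point'' indeed forces $E$ into the normal elliptic case and hence that the hypotheses of Theorem \ref{cambio omologico tipo II-ellittico} are met. The genuinely hard work — the computation $b_2(E)=1$, $b_3(E)=2$ (whence $c'=-2$) together with $c''=m_p$ — is what underlies that theorem and is carried out there, not here.
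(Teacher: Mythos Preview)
Your proposal is correct and matches the paper's own argument: the paper simply says ``the previous (ii) gives immediately the following,'' referring to Theorem \ref{cambio omologico tipo II-ellittico}(ii), which already records that $c''=m_p$ must be even (precisely via the parity constraint $c'\equiv c''\mod 2$ from Definition \ref{ht} with $c'=-2$). Your preliminary reduction---checking that an elliptic singular point on $E$ forces the normal elliptic case---is a sensible bit of care that the paper leaves implicit.
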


\begin{example}\label{esempio-ell} The present example is aimed to give an account of how the Milnor number of $p=\phi(E)$ may jump when specializing to an elliptic exceptional del Pezzo surface $E$, respecting Corollary \ref{Milnor-pari}. Let us, in fact, consider the case $d=2$ in Examples \ref{esempi-tII}, whose general case gives Milnor number $m_p=27$. Specialize to consider
\begin{eqnarray*}
  f_4 &:=& y^2+g_2(x_1,x_2)\ y+g_4(x_1,x_2) \\
  f_5 &=& ax_3^5 + g_5(x_1,x_2,y) \\
  f_6 &=& bx_3^6 + g_6(x_1,x_2,y)
\end{eqnarray*}
where $g_k$ is a generic (weighted, for $k=5,6$) homogeneous polynomial of degree $k$ and $(a,b)\neq (0,0)$: this last condition is necessary to guarantee that $$p:=[1:0:0:0:0]\in\overline{Y}\subset\P:=\P(1,1,1,1,2)$$ is an \emph{isolated} singular point. Its local equation in $\C^4$ is given by
\begin{eqnarray*}
  y^2+g_2(x_1,x_2)\ y+g_4(x_1,x_2) + ax_3^5 &=& 0\quad \text{if $a\neq 0$} \\
  y^2+g_2(x_1,x_2)\ y+g_4(x_1,x_2) + bx_3^6 &=& 0\quad \text{if $a= 0$ and $b\neq 0$}\ .
\end{eqnarray*}
Milnor--Orlik Theorem (\cite{Milnor-Orlik} Theorem 1) then gives
\begin{eqnarray*}
  m_p &=& 3^2\cdot 4 = 36 \quad \text{if $a\neq 0$} \\
  m_p &=& 3^2\cdot 5 = 45\quad \text{if $a=0$ and $b\neq 0$}\ .
\end{eqnarray*}
The strict transform $Y$ of $\overline{Y}$, under the weighted blow up of $\P$ in $p$, admits as exceptional locus the \emph{elliptic del Pezzo} surface
\[
    E:=\{m^2+g_2(l_1,l_2)\ m+g_4(l_1,l_2)=0 \}\subset\P(1,1,1,2)[l_1,l_2,l_3,m]
\]
admitting an elliptic singular point in $q:=[0:0:1:0]\in\P(1,1,1,2)$. By Corollary \ref{Milnor-pari} we see that \emph{$Y$ can be a smooth resolution of $\overline{Y}$ only if $a\neq 0$}, since only in this case $m_p$ is even.
In fact, dividing by $l_3$, the equations of $\overline{Y}$ in the affine open subset
\[
    \mathcal{A}_{0,3}:=\{([x_0:\cdots :x_3:y],[l_1,l_2,l_3,m])\in\P(1,1,1,1,2)\times\P(1,1,1,2)|(x_0,l_3)\neq \mathbf{0}\}
\]
are given by $\mathbf{h}=(h_1,\ldots,h_4)=\mathbf{0}$ where
\begin{eqnarray*}
  h_1 &:=& x_1 - l_1x_3 \\
  h_2 &:=& x_2- l_2 x_3 \\
  h_3 &:=& y- m x_3^2 \\
  h_4 &:=& m^2+g_2(l_1,l_2)m+g_4(l_1,l_2) +ax_3 + x_3g_5(l_1,l_2,m) + bx_3^2 + x_3^2g_6(l_1,l_2,m)
\end{eqnarray*}
whose jacobian in $\mathbf{0}\in\mathcal{A}_{0,3}\cong\C^7$ gives
\[
    J_{\mathbf{0}}(\mathbf{h}):=\frac{\partial\mathbf{h}(\mathbf{0})}{\partial(x_1,\ldots,x_3,y,l_1,l_2,m)}=\left(
                                                                                  \begin{array}{ccccccc}
                                                                                    1 & 0 & 0 & 0 & 0 & 0 & 0 \\
                                                                                    0 & 1 & 0 & 0 & 0 & 0 & 0 \\
                                                                                    0 & 0 & 0 & 1 & 0 & 0 & 0 \\
                                                                                    0 & 0 & a & 0 & 0 & 0 & 0 \\
                                                                                  \end{array}
                                                                                \right)\ .
\]
Clearly $\rk(J_{\mathbf{0}}(\mathbf{h}))=4$ if and only if $a\neq 0$, as expected by Corollary \ref{Milnor-pari}.
Observe that if $a\neq 0$ then $T(Y,\overline{Y},\widetilde{Y})$ is a g.t. whose Betti number can be obtained by Theorem \ref{cambio omologico tipo II-nonnormale} giving $c'=-2$, $c''=36$ and
\begin{equation*}
\begin{tabular}{lclclclc}
  \hline \hline
    \hskip11.4pt   \vrule     &\quad     &$b_2$ &\quad  & $b_3$&\quad
  &$b_4$\\
  \hline
  \hskip11.4pt\vrule&&&&&&&\\
  $Y$ \vrule   &\quad    & 2 &\quad & 174 &\quad & 2  &\\
  $\overline{Y}$ \vrule            &\quad      &  1 &\quad & 172&\quad  & 1 &\\
  $\widetilde{Y}$ \vrule &\quad  & 1 &\quad & 208 &\quad & 1 &\\
   \hline\hline
\end{tabular}
\end{equation*}
Compare with table (\ref{tabella_d=2}).
\end{example}

\subsubsection{Local analysis of the elliptic singularity} The proof of Theorem \ref{cambio omologico tipo II-ellittico} come from the study of a minimal resolution, of a complex smoothing and of the Milnor number of the elliptic singular point of $E$.

\begin{lemma}[The resolution]\label{resolution_d<4}
Let $\pi:\widehat{E}\rightarrow E$ be a minimal resolution of the elliptic del Pezzo surface $E$. Then
\begin{equation}\label{omologia risolta}
    b_0(\widehat{E})=b_4(\widehat{E})=1\ ,\ b_1(\widehat{E})=b_3(\widehat{E})=2\ ,\ b_2(\widehat{E})=h^{1,1}(\widehat{E})= 2 \  .
\end{equation}
\end{lemma}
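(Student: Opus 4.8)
The plan is to reduce everything to a cohomology computation on a ruled surface over an elliptic curve, using the structural description already recalled in \ref{raz_o_ell} and \ref{ellittica}. Since $E$ is a normal \emph{elliptic} del Pezzo surface, its minimal resolution is $\widehat{E}\cong\P(\mathcal{O}_C\oplus\mathcal{L})$, i.e.\ a $\P^1$--bundle $\rho:\widehat{E}\rightarrow C$ over a smooth elliptic curve $C$ (here $\mathcal{L}$ is a positive line bundle on $C$ and $\pi$ contracts the minimal section). All the Betti numbers of $\widehat{E}$ will then follow from those of $C$.

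First I would apply the projective bundle formula (equivalently Leray--Hirsch, with the tautological class $\xi=c_1(\mathcal{O}_{\widehat{E}}(1))$, which restricts to a generator of the second cohomology of each fibre): as graded groups one has $H^k(\widehat{E},\Z)\cong H^k(C,\Z)\oplus H^{k-2}(C,\Z)\cdot\xi$. Substituting the Betti numbers $b_0(C)=b_2(C)=1$ and $b_1(C)=2$ of the elliptic curve $C$ yields at once $b_0(\widehat{E})=1$, $b_1(\widehat{E})=2$, $b_2(\widehat{E})=1+1=2$, $b_3(\widehat{E})=2$ and $b_4(\widehat{E})=1$, which is exactly (\ref{omologia risolta}); Poincar\'{e} duality on the smooth projective surface $\widehat{E}$ gives the consistency check $b_0=b_4$, $b_1=b_3$.

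It then remains to match $h^{1,1}(\widehat{E})$ with $b_2(\widehat{E})=2$. Since $\widehat{E}$ is (birationally) ruled, all its plurigenera vanish, so in particular $h^{2,0}(\widehat{E})=p_g(\widehat{E})=0$ and hence $h^{0,2}(\widehat{E})=0$; the Hodge decomposition $b_2=h^{2,0}+h^{1,1}+h^{0,2}$ then forces $h^{1,1}(\widehat{E})=2$. Equivalently, $\Pic(\widehat{E})$ has rank $2$, generated by the class of a fibre of $\rho$ and the class of a section.

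I do not expect a genuine obstacle here: the only point that needs care is the appeal to the Hidaka--Watanabe classification quoted in \ref{ellittica}, which is precisely what guarantees that $\widehat{E}$ is honestly a $\P^1$--bundle over an elliptic curve (so that no further $(-2)$--curves survive after passing to the minimal resolution), and which makes the bundle-formula computation legitimate.
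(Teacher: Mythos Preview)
Your argument is correct, and it reaches the same conclusion as the paper, but by a slightly different route. The paper also invokes \ref{ellittica} to identify $\widehat{E}\cong\P(\mathcal{O}_C\oplus\mathcal{L})$ over an elliptic curve $C$, but then it computes the Hodge numbers first, via the relation $h^i(\mathcal{O}_{\widehat{E}})=h^i(\mathcal{O}_C)$ for $i>0$ (Hartshorne, Lemma V.2.4), obtaining $h^{0,1}=1$, $h^{0,2}=0$, and then appeals to Hodge symmetry and Kodaira--Serre duality to read off the Betti numbers. You instead go straight to the Betti numbers through the projective bundle formula (Leray--Hirsch) and only afterwards isolate $h^{1,1}$ by killing $h^{2,0}$ via ruledness. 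Your path has the advantage of making $b_2(\widehat{E})=2$ completely explicit (the paper's proof leaves this step to the reader, who must supply either Noether's formula with $K_{\widehat{E}}^2=8(1-g(C))=0$ or the Picard-rank argument you mention at the end); the paper's path, on the other hand, produces $h^{0,2}=0$ and $b_2=h^{1,1}$ simultaneously from the same cohomological input. Either way the computation is routine once the Hidaka--Watanabe description of $\widehat{E}$ is granted, and your caveat about that being the only substantive input is exactly right.
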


\begin{proof} First equations are clearly obvious. By \ref{ellittica}, $\widehat{E}\cong\P(\co_C\oplus\mathcal{L})$ is a geometrically ruled surface over an elliptic curve $C$. Then
\[
    \forall i>0\quad h^i(\co_{\widehat{E}})=h^i(\co_C)\quad\text{(\cite{Hartshorne} Lemma V.2.4)}
\]
giving $h^1(\co_{\widehat{E}})=1$ and $h^2(\co_{\widehat{E}})=0$. These relations and Kodaira--Serre duality suffice to end up the proof.
\end{proof}

\begin{lemma}[The smoothing]\label{smoothing_d<4}
Let $\widetilde{E}$ be a smoothing of the elliptic del Pezzo surface $E$. Then
\begin{equation}\label{omologia lisciata}
    b_0(\widetilde{E})=b_4(\widetilde{E})=1\ ,\ b_1(\widetilde{E})=b_3(\widetilde{E})=0\ ,\ b_2(\widetilde{E})= 10-d
\end{equation}
where $d=\deg(E)$.
\end{lemma}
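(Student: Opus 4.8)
The plan is to identify $\widetilde{E}$ as a smooth del Pezzo surface of degree $d$ and then to read its Betti numbers off the classification already recalled in \ref{razionale}; since $b_0=b_4=1$ and $b_1=b_3=0$ hold for every rational surface, the only real content is the equality $b_2(\widetilde E)=10-d$. First I would realise the smoothing as a flat projective family $\mathcal E\to\Delta$ over a disc with $\mathcal E_0=E$ and $\widetilde E=\mathcal E_t$ smooth for $t\neq 0$. By \ref{luogo eccezionale} the surface $E$ is a normal \emph{Gorenstein} del Pezzo surface; as $\mathcal E$ is flat over the regular one--dimensional base $\Delta$ and its closed fibre $E$ is Gorenstein, the total space $\mathcal E$ is Gorenstein too, so the relative dualising sheaf $\omega_{\mathcal E/\Delta}$ is invertible and restricts to $\omega_{\mathcal E_t}$ on each fibre.

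I would then use two standard properties of proper flat families. Openness of ampleness: since $\omega_{\mathcal E_0}^{-1}=\omega_E^{-1}$ is ample, $\omega_{\mathcal E_t}^{-1}$ is ample for $t$ near $0$, so $\widetilde E$ is itself a smooth del Pezzo surface. Deformation invariance of intersection numbers: $\chi\bigl(\mathcal E_t,\omega_{\mathcal E/\Delta}^{\otimes n}|_{\mathcal E_t}\bigr)$ is locally constant in $t$ for every $n$, so comparing leading terms gives $K_{\widetilde E}^2=K_E^2=d$ (and $d\leq 3$ by \ref{ellittica}). Hence $\widetilde E$ is a smooth del Pezzo surface of degree $d\leq 7$, and \ref{razionale} applied to it shows $\widetilde E$ is $\mathbb P^2$ blown up at $9-d$ points in general position; in particular $\widetilde E$ is rational, which yields $b_0(\widetilde E)=b_4(\widetilde E)=1$, $b_1(\widetilde E)=b_3(\widetilde E)=0$ and $b_2(\widetilde E)=1+(9-d)=10-d$.

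The only point that is not purely formal is making sure $\omega_{\mathcal E/\Delta}$ is a line bundle along the whole family, i.e. that the Gorenstein property of $E$ propagates to the total space; granting that, both the persistence of ampleness and the invariance of $K^2$ are routine, and I expect this bookkeeping to be the main (mild) obstacle. Alternatively one can bypass $\omega_{\mathcal E/\Delta}$ entirely: $\chi(\mathcal O_{\mathcal E_t})$ is locally constant and equals $\chi(\mathcal O_E)=1$ (using $h^1(\mathcal O_E)=0$ from \ref{raz_o_ell} and $h^2(\mathcal O_E)=h^0(\omega_E)=0$), so Noether's formula together with the rationality of a smooth del Pezzo surface gives $b_2(\widetilde E)=10-K_{\widetilde E}^2$, again reducing everything to the invariance of $K^2$.
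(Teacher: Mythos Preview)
Your argument is correct, but it is not the route taken in the paper. The paper proceeds entirely case by case via the explicit projective models recalled in \ref{d=3}, \ref{d=2} and \ref{d=1}: for $d=3$ the elliptic $E$ is a cubic cone in $\P^3$ and $\widetilde{E}$ is simply a generic smooth cubic surface; for $d=2$ and $d=1$ one uses the weighted hypersurface descriptions in $\P(1,1,1,2)$ and $\P(1,1,2,3)$ respectively, so that $\widetilde{E}$ is a generic smooth hypersurface of the same degree in the same ambient space. In each case $\widetilde{E}$ is visibly a smooth del Pezzo surface of degree $d$, and the Betti numbers are then read off from (\ref{omologia eccezionale-sing}) with $k_E=c_E=0$.

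Your approach instead works uniformly for an arbitrary flat smoothing: you propagate the Gorenstein property to the total space, use the relative dualising line bundle together with openness of ampleness and constancy of intersection numbers to deduce that $\widetilde{E}$ is a smooth del Pezzo of the same degree, and then invoke \ref{razionale}. This is more general (it does not presuppose that the smoothing lives in the given ambient weighted projective space) and avoids the case split, at the cost of importing a little deformation--theoretic machinery. The paper's version is shorter and more concrete, but tacitly identifies $\widetilde{E}$ with a \emph{particular} smoothing; your argument shows that this is harmless since any smoothing has the same Betti numbers. The mild worry you flag about $\omega_{\mathcal{E}/\Delta}$ being invertible is indeed the only nontrivial point, and it is settled exactly as you say (flat over a smooth curve with Gorenstein fibres implies Gorenstein total space); your alternative via constancy of $\chi(\mathcal{O})$ and Noether's formula is equally valid.
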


\begin{proof}
Recall that $1\leq d\leq 3$, by \ref{dmu4}. Moreover:
\begin{itemize}
    \item if $d=3$ then $E$ is described in \ref{d=3} and $\widetilde{E}$ is a generic cubic surface in $\P^3$,
    \item if $d=2$ then $E$ is described in \ref{d=2} and $\widetilde{E}$ is a generic quartic surface in $\P(1,1,1,2)$,
    \item if $d=1$ then $E$ is described in \ref{d=1} and $\widetilde{E}$ is a generic degree 6 surface in $\P(1,1,2,3)$.
\end{itemize}
Therefore $\widetilde{E}$ turns out to be isomorphic to a smooth degree $d$ del Pezzo surface. Then (\ref{omologia lisciata}) follow by (\ref{omologia eccezionale-sing}) with $k_E=c_E=0$.
\end{proof}

\begin{lemma}[The Milnor number]\label{Milnor-ell} Let $q$ be the unique (elliptic) singular point of an elliptic del Pezzo surface $E$ of degree $\deg(E)=d$.
Then the Milnor number of $q\in E$ is given by $m_q=11-d$.
\end{lemma}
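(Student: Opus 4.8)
The plan is to pin down the Euler characteristic of the Milnor fibre $\widetilde{B}_q$ of $q\in E$ by running additivity of $\chi$ across two ``cut and paste'' operations --- first replacing $q$ by the exceptional fibre of a minimal resolution, then replacing $q$ by $\widetilde{B}_q$ in a smoothing --- using the already computed Betti numbers of $\widehat{E}$ and $\widetilde{E}$ (Lemmas \ref{resolution_d<4} and \ref{smoothing_d<4}). A preliminary remark is that by \ref{ellittica} one may assume $\Sing(E)=\{q\}$ with $q$ an \emph{isolated hypersurface singularity}: this is visible from the local descriptions in \ref{d=3}, \ref{d=2}, \ref{d=1}, where $E$ is, near $q$, a hypersurface in a smooth threefold (the point $q$ lying away from the singular point of the ambient weighted projective space, as noted in Example \ref{esempio-ell}). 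In particular $q$ is an i.c.i.s., so by Theorem \ref{i.s.omotopia} the Milnor fibre $\widetilde{B}_q$ is $1$--connected with the homotopy type of a bouquet of $m_q$ two--spheres, whence $\chi(\widetilde{B}_q)=1+m_q$.

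The first step would be to compare $\chi(E)$ with $\chi(\widehat{E})$. Let $\overline{B}_q$ be a conical neighbourhood of $q$ as in Theorem \ref{i.s.topologia}: it is contractible and $\partial\overline{B}_q$ is the link $K_q$, a closed oriented $3$--manifold, so $\chi(\overline{B}_q)=1$ and $\chi(K_q)=0$. By \ref{ellittica} the exceptional divisor $L_q:=\pi^{-1}(q)$ is the minimal section of $\widehat{E}\cong\P(\co_C\oplus\mathcal{L})$, hence a smooth elliptic curve with $\chi(L_q)=0$; taking $N(L_q):=\pi^{-1}(\overline{B}_q)$, which retracts onto $L_q$ and has boundary the link $K_q$, and using that $\pi$ restricts to a homeomorphism $\widehat{E}\setminus N(L_q)^{\circ}\xrightarrow{\ \sim\ }E\setminus\overline{B}_q^{\circ}$, additivity of $\chi$ over the two Mayer--Vietoris decompositions gives
\[
\chi(\widehat{E})=\chi\bigl(\widehat{E}\setminus N(L_q)^{\circ}\bigr)+\chi(L_q)-\chi(K_q)=\chi\bigl(E\setminus\overline{B}_q^{\circ}\bigr)=\chi(E)-1 .
\]
Since $\chi(\widehat{E})=1-2+2-2+1=0$ by Lemma \ref{resolution_d<4}, this yields $\chi(E)=1$.

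The second step would be to compare $\chi(\widetilde{E})$ with $\chi(E)$. Away from $q$ the smoothing is a trivial fibration by the Ehresmann fibration theorem (exactly as in the proof of Proposition \ref{contrazione i.s.}), so $\widetilde{E}$ is obtained from $E$ by excising $\overline{B}_q^{\circ}$ and gluing in the Milnor fibre $\widetilde{B}_q$ along $K_q$; additivity of $\chi$ then gives
\[
\chi(\widetilde{E})=\chi\bigl(E\setminus\overline{B}_q^{\circ}\bigr)+\chi(\widetilde{B}_q)-\chi(K_q)=\bigl(\chi(E)-1\bigr)+\bigl(1+m_q\bigr)=\chi(E)+m_q .
\]
By Lemma \ref{smoothing_d<4}, $\chi(\widetilde{E})=1-0+(10-d)-0+1=12-d$, and combining with $\chi(E)=1$ from the first step one gets $m_q=\chi(\widetilde{E})-\chi(E)=(12-d)-1=11-d$, as claimed.

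I do not expect a serious obstacle: once one grants the cone structure of Theorem \ref{i.s.topologia} and the local triviality of both the resolution and the smoothing away from the distinguished fibre, everything reduces to bookkeeping with $\chi$. The one point deserving care is the identification of $q$ as an i.c.i.s., since it is what licenses $\chi(\widetilde{B}_q)=1+m_q$ (no contribution in degree $1$, by the $1$--connectedness in Theorem \ref{i.s.omotopia}); alternatively this input can be bypassed altogether by applying the Milnor--Orlik formula to the three weighted--homogeneous local equations arising in \ref{d=3}, \ref{d=2}, \ref{d=1}, which give $\mu=(3-1)^3=8$, $\mu=3\cdot 3\cdot 1=9$ and $\mu=5\cdot 2\cdot 1=10$ for $d=3,2,1$ respectively, i.e. $m_q=11-d$ in each case.
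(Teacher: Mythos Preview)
Your main argument is correct and takes a genuinely different route from the paper. The paper proceeds case by case: for each value of $d\in\{1,2,3\}$ it writes down the explicit weighted--homogeneous local equation of $q\in E$ coming from \ref{d=3}, \ref{d=2}, \ref{d=1} and applies the Milnor--Orlik formula directly, obtaining $8$, $9$, $10$ respectively. Your approach instead leverages the Euler characteristics already computed in Lemmas \ref{resolution_d<4} and \ref{smoothing_d<4} and runs additivity of $\chi$ across the resolution and the smoothing, yielding the uniform formula $m_q=\chi(\widetilde{E})-\chi(E)=(12-d)-1$ in one stroke. This is cleaner and avoids inspecting three separate local models; the paper's approach, on the other hand, is self--contained (it does not rely on the two preceding lemmas) and makes the local geometry of $q$ explicit. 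The alternative you sketch at the end---applying Milnor--Orlik to the three local equations---is precisely the paper's proof; your numerical factorizations $8=2^3$, $9=3\cdot 3\cdot 1$, $10=5\cdot 2\cdot 1$ match the paper's computations (the paper's phrase ``local equation in $\C^2$'' for $d=2$ is a slip: the surface singularity sits in $\C^3$ with equation $z^2=f_4(x,y)$, as your factorization reflects).
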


\begin{proof} By \ref{d=3}, if $d=3$ then $q$ is the vertex of a cone over a cubic plane curve $f_3(x_1,x_2,x_3)=0$. Then $m_q=2^3=8=11-3$.

\noindent By \ref{d=2}, if $d=2$ then $E$ is a double covering of $\P^2$ ramified along four distinct lines meeting in the elliptic singular point $q\in E$, whose local equation in $\C^2$ is then given the degree 4 homogeneous polynomial in two variables giving the four lines. Then $m_q=3^2=9=11-2$.

\noindent By \ref{d=1}, if $d=1$ then $E$ is the double covering of $\P(1,1,2)[x_0,x_1,y]$ ramified along the reducible degree 6 curve $\{\prod_{i=1}^3(y-q_i(x_1)=0\}$, where $\deg q_i=2$. The elliptic singular point $q$ is then given by $[1:0:0]\in\P(1,1,2)$, whose Milnor number is $m_q=2\cdot 5 =10=11-1$, by the Milnor--Orlik Theorem (\cite{Milnor-Orlik} Theorem 1).
\end{proof}

\subsubsection{Proof of Theorem \ref{cambio omologico tipo II-ellittico}}  Let us recall that $\pi:\widehat{E}\rightarrow E$ is the contraction of the minimal section of $\widehat{E}\cong\P(\co_C\oplus\mathcal{L})$ (see \ref{ellittica}). Then setting $\{q\}=Q:=\Sing(E)$ and $L_q:=\pi^{-1}(q)$ one has
\begin{equation}\label{sezione}
    L=\Exc(\pi)=L_q\cong C
\end{equation}
and the total number of irreducible components of $L$ is given by $n_E=n_q=1=b_2(L)$ (notation as in the Proof of theorem \ref{cambio omologico tipo II}). In the present case, (\ref{rel-exc}) has to be rewritten  as follows
\[
    b_3(\widehat{E})+b_2(L)-b_2(\widehat{E})-b_1(L)+b_1(\widehat{E})+b_0(L)=b_3(E)-b_2(E)+b_1(E)+|Q|
\]
which, by (\ref{omologia risolta}) and (\ref{sezione}), gives
\begin{equation}\label{rel-ell}
    b_3(E)-b_2(E)+b_1(E)=1\ .
\end{equation}
On the other hand, calling $\widetilde{A}$ the Milnor fiber near $q\in E$, it has the homotopy type of a bouquet of 2--spheres, by Theorem \ref{i.s.omotopia} and the fact that $q\in E$ is an isolated hypersurface singularity. By Lefschetz Duality, Ehresmann diffeomorphism and relative homology of the couple $(E,Q)$, one has
\[
    \forall i\geq 2\quad H_i(\widetilde{E},\widetilde{A})\cong H^i(\widetilde{E}\setminus\widetilde{A})\cong H^i(E\setminus Q)\cong H_i(E,Q)\cong H_i(E)\ .
\]
Then the relative homology long exact sequence of the couple $(\widetilde{E},\widetilde{A})$ gives the following exact sequence
\begin{equation*}
    \xymatrix{0\ar[r]&H_3(\widetilde{E})\ar[r]&H_3(E)\ar[r]&H_2(\widetilde{A})\ar[r]&H_2(\widetilde{E})\ar[r]&H_2(E)\ar[r]&0}\ .
\end{equation*}
Apply relations (\ref{omologia lisciata}) and Lemma \ref{Milnor-ell} to such a sequence to get
\begin{equation*}
    b_3(E)-b_2(E)=m_q - 10 +d= 1\ .
\end{equation*}
Recall (\ref{rel-ell}) to get then:
\begin{equation}\label{rel-ellittiche}
    b_0(E)=b_4(E)=1\quad,\quad b_1(E)=0\quad,\quad b_3(E)-b_2(E)=1\ .
\end{equation}
Then point (i) in the statement follows by (\ref{rel-ellittiche}) and observing that the birational contraction $\pi:\widehat{E}\rightarrow E$ is obtained as the contraction of
    the minimal section of $\widehat{E}$ (\cite{Hidaka-Watanabe81},
    Theorem 2.2), whose class is a generator of $H^2(\widehat{E},\Q)\cong\Q^2$.

    \noindent Let us now observe that, by replacing (\ref{omologia eccezionale-sing}) with relations (\ref{rel-ellittiche}), it is still possible to write down equations (\ref{relazioni}) in the present case, proving equations in the first line in Definition \ref{ht}.(a). On the other hand, arguments proving (\ref{Pic-invariante}) and (\ref{Pic-sing}) still hold in the present case, proving the second and the fourth line in Definition \ref{ht}.(a) with $k'=0$ and $k=k''=1$.
Since $\deg E\leq 3$ we can apply relations (\ref{omologia milnor}) to equations (\ref{b3}), to get immediately (ii) in the statement and
\begin{eqnarray}\label{b3-ell}
  b_3(Y) &=& b_3(\overline{Y}) +1 - b_2(E)+b_3(E) = b_3(\overline{Y}) +2  \\
  \nonumber
  b_3(\overline{Y}) &=& b_3(\widetilde{Y}) + \chi(\widetilde{B})-1= b_3(\widetilde{Y}) - m_p\ ,
\end{eqnarray}
proving the third line in (a), and consequently equations in (b), of Definition \ref{ht}, with $c',c''$ as in (i) and (ii), respectively.
\hfill$\Box$

\subsection{The non-normal case}\label{caso non-normale} Let us now assume that the type II g.t. (\ref{typeII}) admits exceptional locus $E=\Exc(\phi)$ given by a \emph{non-normal} del Pezzo surface (recall \ref{non-normale}, cases (i) and (ii)). Then Theorem \ref{cambio omologico tipo II} can be rewritten as follows:

\begin{theorem}\label{cambio omologico tipo II-nonnormale} Assume that the type II g.t. (\ref{typeII}) admits a \emph{non-normal} del Pezzo surface as exceptional divisor $E=\Exc(\phi)$. Then it admits homological type $h[T]=(0,1,c',c'')$ as in Theorem \ref{cambio omologico tipo II} and moreover
\begin{itemize}
    \item[(i)] \emph{$b_2(E)\in\{1,2\}$ and $b_3(E)=0$, giving $c'\in\{0,1\}$, respectively},
    \item[(ii)] $\chi(\widetilde{B})\equiv b_2(E)\mod 2 $\ .
\end{itemize}
\end{theorem}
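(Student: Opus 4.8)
The plan is to determine the rational homology of the non--normal del Pezzo surface $E=\Exc(\phi)$ directly from its description in \S\ref{non-normale}, and then to rerun the argument in the proof of Theorem \ref{cambio omologico II +} with the new numbers. In both cases (i) and (ii) of \S\ref{non-normale} the normalization $\nu:\widehat{E}=\F_a\rightarrow E$ is a finite morphism which is an isomorphism outside the line $D\cong\P^1$ cut out on $E$ by the non--normal locus, and which makes exactly the identifications recorded there; hence $E$ is the topological pushout of $\F_a\hookleftarrow D'\stackrel{j}{\to}D$, where $D':=\nu^{-1}(D)\subset\F_a$ is the conic $C_0$ in case (i) and the reducible curve $C_0\cup f$ (two $\P^1$'s meeting transversally at one point) in case (ii), and $j:=\nu|_{D'}$. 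Since $D'$ is a subcomplex of $\F_a$, this gives a Mayer--Vietoris sequence
\[
    \cdots\longrightarrow H_q(D')\xrightarrow{(i_*,-j_*)} H_q(\F_a)\oplus H_q(D)\longrightarrow H_q(E)\longrightarrow H_{q-1}(D')\longrightarrow\cdots
\]
with $\Q$--coefficients, which I would combine with the elementary facts that $\F_a$ is a Hirzebruch surface (so $b_0=b_4=1$, $b_1=b_3=0$, $b_2=2$, with $[C_0]$ and $[f]$ spanning $H_2$), that $H_*(S^2;\Q)$ is $\Q$ in degrees $0,2$ and $0$ elsewhere, and that $D'\simeq S^2$ (case (i)) or $D'\simeq S^2\vee S^2$ (case (ii)).

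Feeding these in, the tails of the sequence give $b_0(E)=b_4(E)=1$ and $b_1(E)=0$ (using $H_1(D')=H_3(D')=0$), while in the middle it reduces to
\[
    0\longrightarrow H_3(E)\longrightarrow H_2(D')\xrightarrow{(i_*,-j_*)} H_2(\F_a)\oplus H_2(D)\longrightarrow H_2(E)\longrightarrow 0\ .
\]
In case (i), $H_2(D')=\Q$ and $j:C_0\to D$ has degree $2$, so $j_*\neq 0$ and $(i_*,-j_*)$ is injective; hence $H_3(E)=0$ and $\dim H_2(E)=3-1=2$. In case (ii), $H_2(D')=\Q^2$ and $i_*$ carries the two component classes to the basis $[C_0],[f]$ of $H_2(\F_a)$, so $i_*$ is an isomorphism, $(i_*,-j_*)$ is injective, $H_3(E)=0$ and $\dim H_2(E)=3-2=1$. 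This yields
\begin{equation}\label{omologia eccezionale-sing-nn}
    b_0(E)=b_4(E)=1\ ,\ b_1(E)=b_3(E)=0\ ,\ b_2(E)\in\{1,2\}\ ,
\end{equation}
with $b_2(E)=2$ in case (i) and $b_2(E)=1$ in case (ii).

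With (\ref{omologia eccezionale-sing-nn}) in hand I would copy the proof of Theorem \ref{cambio omologico II +}. The first line of Definition \ref{ht}.(a) follows from the Five Lemma applied to the Mayer--Vietoris sequences of $\mathcal{C},\overline{\mathcal{C}},\widetilde{\mathcal{C}}$ together with $b_1(E)=0$ and the \cy condition on $Y,\widetilde{Y}$, exactly as in the derivation of (\ref{relazioni}). The second and fourth lines, with $k'=0$ and $k=k''=1$, follow from the invariance of the Picard number under smoothing (\ref{Pic-invariante}), from (\ref{Pic-sing}) and from Lemma \ref{difetto=0}; none of these invokes normality of $E$ --- only that $p\in\overline{Y}$ is a $\Q$--factorial rational Gorenstein singular point obtained by contracting $E$ and that $\phi$ is a primitive blow up. Finally $b_3(E)=0$ turns (\ref{b3}) into $b_3(Y)=b_3(\overline{Y})+1-b_2(E)$ and $b_3(\overline{Y})=b_3(\widetilde{Y})+\chi(\widetilde{B})-1$, i.e. the third line of Definition \ref{ht}.(a) with $c'=-1+b_2(E)$ and $c''=1-\chi(\widetilde{B})$, and the equations in Definition \ref{ht}.(b) then follow from those in (a) and the \cy conditions. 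This establishes $h[T]=(0,1,c',c'')$ with $c'\in\{0,1\}$ according as $b_2(E)=1$ or $2$, which is (i).

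For (ii) I would only add the parity constraint $c'\equiv c''\bmod 2$, which holds because $T$ admits a homological type (equivalently, because $\chi(Y)-\chi(\widetilde{Y})=c'+c''+2$ is even, $b_3$ of a \cy $3$--fold being even): combining it with $c'=b_2(E)-1$ and $c''=1-\chi(\widetilde{B})$ gives $\chi(\widetilde{B})=1-c''\equiv 1-c'=2-b_2(E)\equiv b_2(E)\bmod 2$, which is (ii). I expect the only real obstacle to be the identification of $E$ with the pushout $\F_a\cup_{D'}D$ carrying the correct gluing maps --- that is, reading off from \S\ref{non-normale} that $D'$ and $D$ are as claimed and that $j$ has degree $2$ (case (i)), resp. degree $1$ on each component (case (ii)) --- together with the remark that $[C_0]$ and $[f]$ span $H_2(\F_a;\Q)$; once these are secured, everything downstream is a verbatim rerun of the normal and elliptic cases.
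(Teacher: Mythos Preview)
Your argument is correct and follows the same strategy as the paper: compute the Betti numbers of $E$ from its normalization $\F_a$, then rerun the Mayer--Vietoris, Picard--number and Euler--characteristic arguments of Theorem \ref{cambio omologico II +}, deducing (ii) from the parity $c'\equiv c''\bmod 2$. Your pushout/Mayer--Vietoris computation of $H_*(E)$ is more explicit than the paper's one--line appeal to the normalization description, and in fact pins down which of the two cases in \S\ref{non-normale} gives $b_2(E)=1$ and which gives $b_2(E)=2$, a refinement the paper leaves implicit.
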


\begin{corollary} If ($\ref{typeII}$) is a type II g.t. with $E=\Exc(\phi)$ a non-normal del Pezzo surface, then $E$ is like in \ref{non-normale}.(i) (resp. \ref{non-normale}.(ii)) if and only if $\chi(\widetilde{B})$ is even (resp. odd).
\end{corollary}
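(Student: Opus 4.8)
The plan is to deduce the corollary from Theorem \ref{cambio omologico tipo II-nonnormale} together with one Betti-number computation on $E$. Part (ii) of that theorem already gives $\chi(\widetilde{B})\equiv b_2(E)\mod 2$, and part (i) gives $b_2(E)\in\{1,2\}$; moreover, by \S\ref{non-normale} a non-normal exceptional divisor $E$ is of type \ref{non-normale}.(i) or of type \ref{non-normale}.(ii) and of no other type. So it suffices to prove that $b_2(E)=2$ when $E$ is as in \ref{non-normale}.(i) and $b_2(E)=1$ when $E$ is as in \ref{non-normale}.(ii): the two cases then correspond to $b_2(E)$ being even, resp.\ odd, hence by Theorem \ref{cambio omologico tipo II-nonnormale}.(ii) to $\chi(\widetilde{B})$ being even, resp.\ odd. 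Since the two alternatives for $E$ are mutually exclusive and exhaustive, this is exactly the stated equivalence.

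To compute $b_2(E)$ I would pass to the normalization $\nu:\widehat{E}\to E$, which in both cases is $\widehat{E}\cong\F_a=\P(\co_{\P^1}\oplus\co_{\P^1}(-a))$. Let $D\subset E$ be the non-normal locus and $\widehat{D}:=\nu^{-1}(D)$; from Gross's description, $E$ is obtained from $\widehat{E}$ by gluing $\widehat{D}$ onto $D$ along a degree-$2$ map, so there is a Mayer--Vietoris exact sequence
\begin{equation*}
\cdots\longrightarrow H_q(\widehat{D})\longrightarrow H_q(\widehat{E})\oplus H_q(D)\longrightarrow H_q(E)\longrightarrow H_{q-1}(\widehat{D})\longrightarrow\cdots\ .
\end{equation*}
Recall $H_*(\F_a;\Z)$: $b_0=b_4=1$, $b_1=b_3=0$, $b_2=2$, with $H_2(\F_a;\Z)=\Z[C_0]\oplus\Z[f]$ for $C_0$ a section and $f$ a fibre. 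In case \ref{non-normale}.(i), $\widehat{D}$ is the section $C_0\cong\P^1$ of $\F_a$, with class a primitive element $[C_0]\in H_2(\F_a;\Z)$, while $D\cong\P^1$ is the line of which $C_0$ is a double cover; hence $H_2(\widehat{D})\to H_2(\widehat{E})$ is injective, and the sequence forces $b_0(E)=b_4(E)=1$, $b_1(E)=b_3(E)=0$ and $b_2(E)=2+1-1=2$. In case \ref{non-normale}.(ii), $\widehat{D}=C_0\cup f$ is a connected nodal union of two $\P^1$'s (so $H_0(\widehat{D})=\Z$, $H_1(\widehat{D})=0$, $H_2(\widehat{D})=\Z[C_0]\oplus\Z[f]$) and $D\cong\P^1$; the map $H_2(\widehat{D})\to H_2(\widehat{E})\oplus H_2(D)$ sends $[C_0]\mapsto([C_0],[D])$ and $[f]\mapsto([f],[D])$, which is injective with cokernel of rank $1$ since $\{[C_0],[f]\}$ is a $\Z$--basis of $H_2(\F_a;\Z)$; thus $b_0(E)=b_4(E)=1$, $b_1(E)=b_3(E)=0$ and $b_2(E)=1$. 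Both outcomes are consistent with Theorem \ref{cambio omologico tipo II-nonnormale}.(i).

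Combining this with $\chi(\widetilde{B})\equiv b_2(E)\mod 2$ closes the argument as in the first paragraph. The one delicate (but minor) point is the claim that $E$ is genuinely the topological pushout of $\widehat{E}$ and $D$ along $\widehat{D}$, i.e.\ that $\nu$ performs no identifications beyond those recorded by the degree-$2$ map $\widehat{D}\to D$; this is immediate from the explicit form of the two projections recalled in \S\ref{non-normale}, and could alternatively be bypassed by invoking seminormality of a del Pezzo surface occurring as the exceptional divisor inside the smooth threefold $Y$. I expect this normalization/pushout bookkeeping to be the main obstacle; the ensuing Mayer--Vietoris computation is routine.
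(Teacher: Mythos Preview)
Your proposal is correct. The paper gives no separate proof of the Corollary, treating it as an immediate consequence of Theorem \ref{cambio omologico tipo II-nonnormale}; in the proof of that theorem the identification of which case of \S\ref{non-normale} yields which value of $b_2(E)$ is left implicit in the phrase ``recalling the construction of the normalization morphism $\F_a\rightarrow E$''. Your pushout Mayer--Vietoris computation makes this explicit and matches the intended values ($b_2(E)=2$ in case (i), $b_2(E)=1$ in case (ii)), so your argument is essentially the paper's approach spelled out in full.
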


\begin{proof}[Proof of Theorem \ref{cambio omologico tipo II-nonnormale}] Let us first of all observe that the normalization $\F_a=\P(\co\oplus\co_{\P^1}(-a))$ of $E$ has the following Betti numbers:
\begin{equation}\label{Betti_Fa}
    b_0(\F_a)=b_4(\F_a)=1\quad,\quad b_1(\F_a)=b_3(\F_a)=0\quad,\quad b_2(\F_a)=h^{1,1}(\F_a)=2\ .
\end{equation}
In fact, the first equalities are obvious. Moreover,
\[
    \forall i>0\quad h^i(\co_{\F_a})=h^i(\co_{\P^1})=0
\]
giving the middle equalities in (\ref{Betti_Fa}) and $b_2(\F_a)=h^{1,1}(F_a)$. In particular, $\chi(\co_{\F_a})=1$ and the Noether formula (\cite{BPvdV84} Theorem I.(5.4)) gives
\begin{equation}\label{Noether_Fa}
    2+h^{1,1}(\F_a)=\chi(\F_a)=12 \chi(\co_{\F_a})-K_{\F_a}^2=4\ \Longrightarrow\ h^{1,1}(\F_a)=2\ ,
\end{equation}
where $K_{\F_a}=-2C_0-(a+2)f$ is the canonical divisor of $\F_a$, giving $K_{\F_a}^2=8$ (notation as in \ref{non-normale}; see \cite{Hartshorne} Corollary V.2.11). Then (i) follows immediately by (\ref{Betti_Fa}) recalling the construction of the normalization morphism $\F_a\rightarrow E$ described in \ref{non-normale}.(i) and (ii). Moreover, we get
\begin{equation}\label{omologia eccezionale-sing-nn}
    b_0(E)=b_4(E)=1\quad,\quad b_1(E)=b_3(E)=0\quad,\quad b_2(E)\in\{1,2\}\ ,
\end{equation}
which replaced to (\ref{omologia eccezionale-sing}) guarantee that (\ref{relazioni}), (\ref{Pic-invariante}) and (\ref{Pic-sing}) still hold, by the same arguments. Then equations in Definition \ref{ht}.(c), with $k,c',c''$ as in Theorem \ref{cambio omologico tipo II}, give
\begin{eqnarray*}
  \chi(Y)-\chi(\overline{Y}) &=& 2-b_3(Y)+b_3(\overline{Y})=1+b_2(E)=\chi(E)-1 \\
  \chi(\overline{Y}) - \chi(\widetilde{Y})&=& -b_3(\overline{Y})+b_3(\widetilde{Y})=1-\chi(\widetilde{B})
\end{eqnarray*}
which is enough to prove (a) and (b) in Definition \ref{ht} with $k'=0,k=k''=1$ and $c',c''$ assigned by (i) and (ii).
\end{proof}


\begin{thebibliography}{10}

\bibitem{Altmann97}
Altmann K. ``The versal deformation of an isolated toric
Gorensteins singularity" \emph{Invent. Math.} \textbf{128}(1997),
443--479;{\urlfont math.AG/9403004}.

\bibitem{Banagl}
Banagl M. ``Singular spaces and generalized Poincar\'{e} complexes" \emph{Electron. Res. Announc. Math. Sci.} \textbf{16} (2009), 63--73.

\bibitem{BPvdV84}
Barth W., Peters C. and Van de Ven A. \emph{Compact complex
surfaces} vol.~\textbf{4} E.M.G, Springer--Verlag (1984)

\bibitem{Batyrev94}
Batyrev V. ``Dual polyhedra  and mirror symmetry for \cy
hypersurfaces in toric varieties" \emph{J. Alg. Geom.} \textbf{3}
(1994), 493--535; {\urlfont math.AG/9310003}.

\bibitem{CHSW}
Candelas P., Horowitz G.T., Strominger A. and Witten E. ``Vacuum
configurations for superstrings" \emph{Nuclear Phys.}
\textbf{B258}(1) (1985), 46--74.

\bibitem{Clemens83}
Clemens C.H.  ``Double Solids'' \emph{Adv. in Math.}  \textbf{47}
(1983), 107--230.

\bibitem{Cox-Katz99}
Cox A.D. and Katz S.  \emph{Mirror Symmetry and Algebraic
Geometry} vol.~\textbf{68}  Math. Surveys and Monographs, Amer.
Math. Soc., Providence RI (1999).

\bibitem{Dolgachev}
Dolgachev I. ``Weighted projective varieties" in \emph{Group actions and vector fields (Vancouver, B.C., 1981)}, 34--71, \emph{Lecture Notes in Math.} \textbf{956}, Springer, Berlin, 1982.

\bibitem{Enriques46}
Enriques F. \emph{Le superficie algebriche} Zanichelli, Bologna
(1946).

\bibitem{Friedman86}
Friedman R. ``Simultaneous resolution of threefold double points''
\emph{Math. Ann.} \textbf{247} (1986), 671--689.

\bibitem{GR02}
Grassi A. and Rossi M. ``Large $N$--dualities and transitions in
geometry" in \emph{Geometry and Physics of Branes, Como 2001}
Series of H.E.P., Cosmology and Gravitation, IoP Bristol (2003),
210--278; {\urlfont math.AG/0209044}.

\bibitem{Green-Hubsch88let}
Green P.S. and H\"{u}bsch T. ``Possible phase transitions among
\cy compactifications" \emph{Phys. Rev. Lett.} \textbf{61} (1988),
1163--1166.

\bibitem{Green-Hubsch88}
Green P.S. and H\"{u}bsch T. ``Connetting moduli spaces of \cy
threefolds" \emph{Comm. Math. Phys.} \textbf{119} (1988),
431--441.

\bibitem{GMS95}
Greene B., Morrison D.R. and Strominger A. ``Black hole
condensation and the unification of string vacua" \emph{Nucl.
Phys.}  \textbf{B451} (1995), 109--120; {\urlfont hep-th/9504145}.

\bibitem{Gross97a}
Gross M. ``Deforming \cy threefolds'' \emph{Math. Ann.}
\textbf{308} (1997), 187--220; {\urlfont math.AG/9506022}.

\bibitem{Gross97b}
Gross M. ``Primitive \cy threefolds'' \emph{J. Diff. Geom.}
\textbf{45} (1997), 288--318; {\urlfont math.AG/9512002}.

\bibitem{Hartshorne}
Hartshorne R. \emph{Algebraic Geometry} G.T.M. \textbf{52},
Springer--Verlag, Berlin--Hidelberg--New York (1977).

\bibitem{Hidaka-Watanabe81}
Hidaka H. and Watanabe K.--I. ``Normal Gorenstein surfaces with
ample anti--canonical divisor" \emph{Tokyo J. Math.} \textbf{4}
(1981), 319--330.

\bibitem{Hirzebruch87}
Hirzebruch F. ``Some examples of threefolds with trivial canonical
bundle" in \emph{Collected papers}, vol.~II, 757--770, Springer
(1987).

\bibitem{Iano-Fletcher}
Iano-Fletcher A.R.
``Working with weighted complete intersections" in \emph{Explicit birational geometry of 3-folds},
London Math. Soc. Lecture Note Ser. \textbf{281}, Cambridge Univ. Press, Cambridge (2000), 101--173.

\bibitem{Joice2000}
Joyce D.  \emph{Compact manifolds with Special Holonomy}, Oxford
Science Publications, Oxford--New York (2000).

\bibitem{KK}
Kapustka G. and M. ``Primitive contractions of Calabi-Yau threefolds. I" \emph{Comm. Algebra} \textbf{37} (2009), 482--502; \texttt{arXiv:math/0703810v3 [math.AG]}.

\bibitem{KMM}
Kawamata Y., Matsuda K. and Matsuki K. ``Introduction to the Minimal Model program" \emph{Algebraic
Geometry, Sendai 1985}, \emph{Adv. Stud. Pure Math.} \textbf{10} (1987), 283-360.

\bibitem{Kodaira64}
Kodaira K. ``On the structure of compact complex analytic
surfaces" \emph{Am. J. Math.} \textbf{86} (1964), 751--798.

\bibitem{Kollar-Mori92}
Koll\'{a}r J. and Mori S. ``Classification of three--dimensional
flips" \emph{J. Amer. Math. Soc.} \textbf{5} (1992), 533--703.

\bibitem{Laufer81}
Laufer H. ``On $\C P^1$ as an exceptional set" in \emph{Recente
developments in several complex variables} Ann. Math. Stud.
\textbf{100} (1981), 261--276.

\bibitem{Looijenga}
Looijenga E.J.N. \emph{Isolated singular points on complete
intersections} LMS Lecture Notes \textbf{77}, Cambridge Univ.
Press (1984).

\bibitem{Milnor68}
Milnor J. \emph{Singular points of complex hypersurfaces}, Annals
of Math. Studies \textbf{61}, Princeton University Press,
Princeton (1968).

\bibitem{Milnor-Orlik}
Milnor J. and Orlik P. ``Isolated
singularities defined by weighted homogeneous polynomials",
\emph{Topology} \textbf{9} (1970), 385--393.

\bibitem{Morrison85}
Morrison D.R. ``The birational geometry of surfaces with rational
double points" \emph{Math. Ann.} \textbf{271} (1985), 415--438.

\bibitem{Morrison99}
Morrison D.R.  ``Through the looking glass'' in \emph{Mirror
Symmetry III},  American Mathematical Society and International
Press (1999),  263-277; {\urlfont math.AG/9705028}.

\bibitem{Morrison-Seiberg97}
Morrison D.R. and Seiberg N. ``Extremal transitions and
five--dimensional supersymmetric field theories" \emph{Nucl.
Phys.} \textbf{B483} (1997), 229--247; {\urlfont hep-th/9609070}.

\bibitem{Morrison-Vafa96}
Morrison D.R.and Vafa C. ``Compactifications of $F$-theory on Calabi-Yau threefolds.II" \emph{Nucl. Phys.} \\textbf{B476} (1996), 437--469; {\urlfont hep-th/9603161}.


\bibitem{MP}
Miranda R. and Persson U. ``On extremal rational elliptic
surfaces" \emph{Math. Z.} \textbf{193}(4) (1986), 537--558.

\bibitem{Namikawa02}
Namikawa Y. ``Stratified local moduli of \cy 3--folds''
\emph{Topology} \textbf{41} (2002), 1219--1237.

\bibitem{Namikawa-Steenbrink95}
Namikawa Y. and Steenbrink J. ``Global smoothing of \cy 3--fold"
\emph{Invent. Math.} \textbf{122} (1995), 403--419.

\bibitem{Pinkham77}
Pinkham H. ``Simple elliptic singularities, del Pezzo surfaces and
Cremona transformations" in \emph{Several Complex Variables} Proc.
Symp. Pure Math. \textbf{30} (1977), 69--70.

\bibitem{Pinkham81}
Pinkham H. ``Factorization of birational maps in dimension three"
in \emph{Singularities} Proc. Symp. Pure Math. \textbf{40} (1981),
343--372.

\bibitem{Reid80}
Reid M. ``Canonical 3--folds'' in \emph{Journ\'{e}es de
g\'{e}om\'{e}trie alg\'{e}brique d'Angers}, Sijthoff \& Norddhoff
(1980), 671--689.

\bibitem{Reid83}
Reid M. ``Minimal model of canonical 3--folds'' in \emph{Algebraic
varieties and analytic varieties}, Adv. Stud. Pure Math.
\textbf{1}, North--Holland (1983), 131--180.

\bibitem{Reid87}
Reid M. ``The moduli space of 3--folds with $K=0$ may neverthless
be irreducible" \emph{Math. Ann.} \textbf{287} (1987), 329--334.

\bibitem{Reid94}
Reid M. ``Non--normal del Pezzo surfaces" \emph{Math. Proc. RIMS}
\textbf{30} (1994), 695--727.

\bibitem{R}
Rossi M. ``Geometric transitions" \emph{J. Geom. Phys.}
\textbf{56}(9) (2006), 1940--1983; \texttt{math/0412514}.

\bibitem{R2}
Rossi M. \emph{On a remark by Y.Namikawa} \texttt{arXiv:0807.4104
[math.AG]}.

\bibitem{Schlessinger71}
Schlessinger M. ``Rigidity of quotient singularities"
\emph{Invent. Math.} \textbf{14} (1971), 17--26.

\bibitem{Schoen88}
Schoen C. ``On fiber products of rational elliptic surfaces with
section" \emph{Math. Z.} \textbf{197}(2) (1988), 177--199.

\bibitem{Steenbrink83}
Steenbrink J.H.M. ``Mixed Hodge Structure associated with isolated singularities" in \emph{Singularities, Part 2 (Arcata, Calif., 1981)}, Proc. Sympos. Pure Math. \textbf{40} (1983), 513--536.

\bibitem{Steenbrink87}
Steenbrink J.H.M. ``Mixed Hodge structures and singularities: a survey" in \emph{G\'{e}om\'{e}trie alg\'{e}brique et applications, III (La R\'{a}bida, 1984)}, Travaux en Cours \textbf{24}, Hermann, Paris (1987), 99--123.

\bibitem{Strominger95}
Strominger A. ``Massless black holes and conifolds in string
theory" \emph{Nucl. Phys.} \textbf{B451} (1995), 97--109;
{\urlfont hep-th/9504145}.

\bibitem{Tamas04}
Tam\'as C. ``On the classification of crepant analytically
extremal contractions of smooth three--folds" \emph{Compos.
   Math.} \textbf{140} (2004), 1561--1578; {\urlfont
math.AG/0204015}.

\bibitem{Tian92}
Tian G. ``Smoothing threefold with trivial canonical bundle and
ordinary double points" in \emph{Essays on Mirror Manifolds}
Internat. Press, Hong Kong (1992), 458--479.

\bibitem{Werner-vanGeemen90}
Van Geemen B. and Werner J. ``New examples of threefolds with
$c_1=0$" \emph{Math.Z.} \textbf{203} (1990), 211--225.

\bibitem{Werner87}
Werner J. ``Kleine Aufl\"osungen spezieller dreidimensionaler
Variet\"aten" \emph{Bonn. Math. Schr.} \textbf{186} (1987).

\bibitem{Wilson92}
Wilson P.M.H.  ``The \ka cone on \cy threefolds''
\emph{Invent.Math.}  \textbf{107} (1992), 561--583.

\bibitem{Wilson93}
Wilson P.M.H. \emph{Erratum} to ``The \ka cone on \cy threefolds''
\emph{Invent.Math.}  \textbf{114} (1993), 231--233.

\end{thebibliography}
\end{document}